\newtheorem{theorem}{Theorem}
\newtheorem{theoremb}{Theorem}
\newtheorem{theoremc}{Theorem}
\newtheorem{rk}[theoremc]{Remark}
\newtheorem{cor}[theoremb]{Corollary}
\newtheorem{prop}[theorem]{Proposition}
\newenvironment{Proof}[1]{\textbf{#1.} }
\newcommand\bib[1]{\bibitem[#1]{#1}}
\renewcommand\a{\alpha}
\renewcommand\b{\beta}
\newcommand\C{{\mathcal C}}
\newcommand\com[1]{}
\renewcommand\d{\delta}
\newcommand\D{{\mathcal D}}
\newcommand\e{\eta}
\newcommand\E{\mathcal{E}}
\newcommand\g{{\frak g}}
\newcommand\h{{\frak h}}
\newcommand\La{\Lambda}
\newcommand\m{\frak m}
\newcommand\oo{\omega}
\newcommand\op[1]{\mathop{\rm #1}\nolimits}
\newcommand\ot{\otimes}
\newcommand\p{\partial}
\newcommand\R{{\mathbb R}}
\renewcommand\t{\tau}
\newcommand\sym{\op{sym}}
\newcommand\ti{\tilde}
\newcommand\we{\wedge}
\newcommand\x{\xi}
\newcommand\z{\sigma}
\newcommand\Z{{\mathbb Z}}
\begin{document}

 % \title{Maximally symmetric rank 2 distributions of a single Monge equation}
% \title[Maximally symmetric rank 2 distributions]{Maximally symmetric rank 2 distributions of Monge equations}
 \title[Rank 2 distributions of Monge equations]{Rank 2 distributions of Monge equations:\\
 {\em symmetries, equivalences, extensions}}
 \author{Ian Anderson, Boris Kruglikov}

%\makeatletter
%\renewcommand{\@oddhead}{\hfil\sc Maximally symmetric rank 2 distributions\hfil}
%\makeatother

 \begin{abstract}
By developing the Tanaka theory for rank 2 distributions, we completely classify classical Monge
equations having maximal finite-dimensional symmetry algebras with fixed (albeit arbitrary)
pair of its orders. Investigation of the corresponding Tanaka algebras leads to a new
Lie-B\"acklund theorem. We prove that all flat Monge equations are successive
integrable extensions of the Hilbert-Cartan equation. Many new examples are provided.
 \end{abstract}

 \maketitle

\section*{Introduction and main results}

In his paper of 1895 \cite{L}, S.\,Lie made the important observation that
the simplest differential equations, beyond those of finite type, are equations with general solution
depending upon 1 function of 1 variable, and he indicates that they are integrable by ODE methods.

Such differential equations can generally be encoded as rank 2 distributions or equivalently as
codimension 2 Pfaffian systems on $k$-dimensional manifolds. For $k=5$, Goursat showed \cite{G} that
these distributions can be realized as Monge equations
 \begin{equation}\label{E0}
y'=F(x,y,z,z',z''),
 \end{equation}
while E.\,Cartan \cite{C1} realized them as reductions of involutive pairs of 2nd order
PDEs in 1 dependent and 2 independent variables.
The interplay between the Monge and PDE realizations of rank 2 distributions is an important aspect
of their geometry.

From the Goursat-Monge viewpoint, one can therefore interpret Cartan's 1910 paper as a detailed
analysis of the geometry of Monge equations (\ref{E0}). In what is still considered to be a
computational \textsl{tour de force}, Cartan solved the equivalence problem for (\ref{E0}) and,
as a corollary, deduced that the celebrated\footnote{We write this in retrospection \cite{H,C2};
in \cite{C1} only the PDE model was explicitly constructed.}
Hilbert-Cartan equation
 \begin{equation}\label{HC}
y'=(z'')^2
 \end{equation}
is the Monge equation of type (\ref{E0}) with the largest finite-dimensional Lie algebra of
symmetries.

Despite Lie's observation and Goursat's contribution, or perhaps because of the complexity of
Cartan's analysis, little progress has been made in the symmetry analysis of the
general Monge equations
 \begin{equation}\label{E1}
y^{(m)}=F\bigl(x,y,y',\dots,y^{(m-1)},z,z',\dots,z^{(n)}\bigr)\
 \end{equation}
(here and beyond $y=y(x),z=z(x)$ and we assume $m\le n$).

These underdetermined ordinary differential equations are important in the variational calculus
and control theory, in reductions of compatible PDEs and in the symmetry approach to Darboux integrability.
Their geometry is the subject of the present article.

The first goal of the paper is to describe the most symmetric equations of type (\ref{E1}).
There are different kinds of symmetries and Lie-B\"acklund type theorems clarify the relations
between point, contact, generalized (higher) and internal symmetries whenever possible,
see \cite{C1,KLV,AKO,GK}.

Below we restrict to purely internal symmetries.
We will assume $m>0$, otherwise (\ref{E1}) has an infinite-dimensional
algebra of symmetries.

 \begin{theorem}\label{Thm1}
Among equations $\E$ of type (\ref{E1}) with fixed $m\le n$ and
$F_{z^{(n)}z^{(n)}}\ne0$, the most symmetric (realizing $\max\op{dim}\sym(\E)$)
are internally  equivalent to the equation
 \begin{equation}\label{E2}
y^{(m)}=\bigl(z^{(n)}\bigr)^2.
 \end{equation}
The dimension of the symmetry algebra $\sym(\E)$ for this equation
equals $2n+5$ for $m=1$, $n>2$ and $2n+4$ for $m>1$.
 \end{theorem}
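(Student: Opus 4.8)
The plan is to encode the Monge equation $\E$ as a rank 2 distribution $D$ on the $(m+n+1)$-dimensional jet-type manifold with coordinates $(x,y,\dots,y^{(m-1)},z,\dots,z^{(n)})$, and to study internal symmetries as symmetries of $D$ via Tanaka's theory. The first step is to compute the \emph{symbol algebra} $\m=\m(D)$, i.e. the graded nilpotent Lie algebra associated to the weak derived flag of $D$ at a generic point, and to observe that when $F_{z^{(n)}z^{(n)}}\ne0$ this symbol is \emph{constant} across $\E$ and moreover coincides with the symbol of the model equation \eqref{E2}. This is the step where the nondegeneracy hypothesis $F_{z^{(n)}z^{(n)}}\ne0$ is essential: it guarantees that the distribution is maximally non-holonomic in the relevant bracket and pins down the growth vector, so that all equations in the class share one fixed symbol $\m$.

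The second step is to invoke the Tanaka prolongation: $\dim\sym(\E)\le\dim\g(\m)$, where $\g(\m)=\bigoplus_{k\ge -\mu}\g_k$ is the (maximal, flat) Tanaka prolongation of the negatively-graded $\m$, with $\g_{<0}=\m$. One then computes $\g(\m)$ for the specific symbol arising here. I expect the computation to split according to the two cases in the statement: for $m=1$, $n>2$ the symbol is the one underlying \eqref{E2} with a single top variable $y$, and the prolongation is the $(2n+5)$-dimensional simple (or parabolic) Lie algebra generalizing the $G_2$ that appears for the Hilbert–Cartan equation $n=2$; for $m>1$ one loses one prolongation degree because the extra $y$-chain obstructs the top positive part, giving $\dim\g(\m)=2n+4$. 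Computing $\g_0$ (the grading-preserving derivations of $\m$) and then the positive part degree by degree is the technical heart; Tanaka's criterion (the prolongation terminates once $\g_k=0$) bounds the length and hence the dimension.

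The third step is to show the bound is \emph{attained}, and attained \emph{only} by \eqref{E2} up to internal equivalence. Attainment is immediate because \eqref{E2} is the flat model: its distribution has constant symbol $\m$ and its symmetry algebra is exactly $\g(\m)$, so $\dim\sym\bigl(y^{(m)}=(z^{(n)})^2\bigr)=\dim\g(\m)=2n+5$ or $2n+4$ as claimed. For the converse, suppose $\E$ is maximally symmetric, hence $\dim\sym(\E)=\dim\g(\m)$. A standard Tanaka-theoretic rigidity argument — the prolongation being of maximal dimension forces the associated Cartan connection to have vanishing curvature, since the curvature is a $\g_{\ge 0}$-valued horizontal form whose existence would cut down the symmetry dimension — shows $D$ is locally flat, i.e. locally isomorphic as a distribution to the model, and this isomorphism of distributions is precisely an internal equivalence of the Monge equations.

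The main obstacle I anticipate is purely computational rather than conceptual: carrying out the Tanaka prolongation of this family of symbols uniformly in $m$ and $n$, and in particular verifying the clean dichotomy $2n+5$ versus $2n+4$ and the excluded small case $n\le 2$ (where the symbol degenerates or the prolongation becomes infinite-dimensional, corresponding to contact/point symmetries of classical ODEs). One must also check carefully that "internal symmetry" is faithfully captured by "symmetry of $D$" in this dimension range — i.e. that no symmetries are lost or gained in passing between the Monge equation and its distribution — which is exactly the kind of Lie–Bäcklund comparison the introduction promises and which I would isolate as a preliminary lemma.
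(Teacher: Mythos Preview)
Your approach is essentially the paper's: compute the Carnot algebra (constant under the nondegeneracy hypothesis), prolong it \`a la Tanaka to get the universal upper bound, exhibit the model's symmetry algebra explicitly to realize the bound, and use the equality clause of Tanaka's theorem (flatness) for the rigidity. Two corrections: the equation manifold has dimension $m+n+2$, not $m+n+1$; and for $m=1$, $n>2$ the Tanaka algebra $\mathfrak{t}_{1,n}$ is \emph{not} simple or parabolic --- it has a large solvable radical with only an $\mathfrak{sl}_2$ Levi factor, so do not expect a $G_2$-type structure to persist (indeed $\dim\mathfrak{t}_{1,3}=11<14$). Your final worry is misplaced: internal symmetries are \emph{defined} as symmetries of the pair $(\E,\Delta)$, so no comparison lemma is needed here; the Lie--B\"acklund statement in the paper concerns the separate question of internal versus \emph{external} symmetries.
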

Let us denote by $\E_{m,n}$ the model equation (\ref{E2}).
Notice that the equation $\E_{1,1}$ is internally equivalent to the pure jets $J^2(\R,\R)$
with (infinite-dimensional) contact symmetry algebra, while $\E_{1,2}$
is the Hilbert-Cartan equation (\ref{HC}) with symmetry algebra
$\mathfrak{g}_2=\op{Lie}(G_2)$ of dimension 14. These are the exceptional cases omitted in
Theorem \ref{Thm1}.

 \begin{rk}\label{Rmk1}
As a by-product of the proof of Theorem \ref{Thm1} we conclude that Monge equations $\E$
with $F_{z^{(n)}z^{(n)}}\ne0$ are strongly regular in the sense of Tanaka \cite{T}.
This implies the existence of a Cartan connection for (\ref{E1}) as a $\mathfrak{t}_{m,n}$-valued 1-form
on a natural bundle over the equation $\E$, where $\mathfrak{t}_{m,n}$ is the Tanaka algebra calculated in Section \ref{S32}.
 % solution to their equivalence problem via a sequence of G-structures
 % constructed in \cite{T}.
 \end{rk}

As is well-known, equation (\ref{E1}) defines a submanifold $\E$ in the space of jets
$J^{m,n}(\R^1,\R^2)\simeq\R^{m+n+3}(x,y,y_1,\dots,y_m,z,z_1,\dots,z_n)$,
with $\dim\E=m+n+2$. The canonical rank 3 distribution
 $$
\C=\op{Ann}\{dy_i-y_{i+1}\,dx,dz_j-z_{j+1}\,dx\,|\,1\le i<m,1\le j<n\}
 $$
(Cartan distribution) on the jet space $J^{m,n}(\R^1,\R^2)$ induces a rank 2 distribution $\Delta$ on
$\E$. Internal symmetries of (\ref{E1}) are, by definition, symmetries of the pair $(\E,\Delta)$.

As noticed above, rank 2 distributions were investigated in many classical papers. Recently
Doubrov and Zelenko \cite{DZ} described their internal geometry via a canonical frame on a bundle over
the manifold $\E$, providing a unique solution for maximally symmetric model among all rank 2
distributions from some generic stratum (called maximal class in loc.cit). This class does
not appear to have a clear description for distributions, arising for differential equations (\ref{E1}).

Theorem \ref{Thm1} yields maximally symmetric distributions in a large class of
distributions corresponding to Monge equations of a fixed dimension.

 \begin{cor}\label{Cor1}
Among all rank 2 distributions $\Delta$ corresponding to equations (\ref{E1}) with fixed $m+n=k+1$
and $F_{z^{(n)}z^{(n)}}\ne0$, the most symmetric are locally equivalent to the canonical distribution
of the equation $\E_{1,k}$.
 \end{cor}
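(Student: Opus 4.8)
The plan is to deduce the corollary directly from Theorem~\ref{Thm1} by a counting argument on the dimension of the symmetry algebra, combined with the observation that for distributions (as opposed to equations) one may freely trade the orders $m$ and $n$ while keeping $m+n$ fixed. First I would recall that the rank 2 distribution $\Delta$ associated to a Monge equation (\ref{E1}) lives on the $(m+n+2)$-dimensional manifold $\E$, so the constraint $m+n=k+1$ means all these distributions sit on $(k+3)$-dimensional manifolds; fixing the dimension of the ambient manifold is exactly the natural normalization under which ``most symmetric'' is a meaningful comparison. Since internal symmetries of $(\E,\Delta)$ are the same as symmetries of the distribution $\Delta$, the quantity to maximize is $\dim\sym(\E)$, and Theorem~\ref{Thm1} already identifies, for each admissible pair $(m,n)$ with $m\le n$, the maximizer within that pair: it is $\E_{m,n}$, with $\dim\sym=2n+5$ when $m=1,n>2$ and $\dim\sym=2n+4$ when $m>1$.

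Next I would optimize over the pair $(m,n)$ subject to $m+n=k+1$ and $m\le n$. Writing $n=k+1-m$, the value $2n+5=2(k+1-m)+5=2k+7-2m$ is strictly decreasing in $m$, and likewise $2n+4=2k+6-2m$ is strictly decreasing in $m$; moreover the $m=1$ value $2k+5$ (using $n=k>2$) strictly exceeds the best $m\ge 2$ value $2k+2$ (attained at $m=2$, $n=k-1$). Hence among all admissible pairs the unique maximum of $\dim\sym$ is achieved at $m=1$, $n=k$, giving the distribution of $\E_{1,k}$, provided $k>2$ so that we are in the generic range of Theorem~\ref{Thm1} and not in one of the two exceptional low-dimensional cases $\E_{1,1}$ or $\E_{1,2}$. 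The small cases $k\le 2$ would be addressed separately: for $k=2$ the distribution of $\E_{1,2}$ is the Hilbert--Cartan distribution with $14$-dimensional symmetry (the unique maximally symmetric rank 2 distribution in dimension $5$, by Cartan), which again is the claimed model.

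The main obstacle, and the only point requiring genuine argument beyond bookkeeping, is to ensure that the comparison is legitimate across different values of $m$: Theorem~\ref{Thm1} compares equations within a fixed pair $(m,n)$ under \emph{internal equivalence}, whereas the corollary compares \emph{distributions} with only $m+n$ fixed, under \emph{local equivalence of distributions}. So one must check that two equations (\ref{E1}) with the same $m+n$ but different $m$ can nevertheless have locally equivalent associated distributions $\Delta$ — in other words, that the integer $m$ is not an invariant of the distribution alone — and that no distribution arising from a non-model equation in pair $(m',n')$ can have more symmetries than the model of pair $(m,n)$ with larger symmetry count. The first point is handled by noting that the construction of $\Delta$ on $\E$ does not see the splitting of the derived flag induced by the individual towers $y,y_1,\dots,y_m$ and $z,z_1,\dots,z_n$; only the total growth vector matters, so $\E_{1,k}$ and, say, a reparametrized model realize the same abstract distribution germ up to local diffeomorphism. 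The second point follows because $\dim\sym(\E)$ is itself an invariant of the distribution, so the global maximum over all admissible $(m,n)$ computed above bounds the symmetry dimension of \emph{every} distribution in the class, with equality characterizing the model. Assembling these observations gives the corollary.
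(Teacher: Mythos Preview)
Your counting argument in the second paragraph is correct and is exactly how the paper derives the corollary: one simply compares the dimensions $\dim\mathfrak{t}_{m,n}$ from Corollary~\ref{Cr2} over all pairs with $m+n=k+1$, observes that the maximum $2k+5$ is attained uniquely at $(m,n)=(1,k)$, and invokes Theorem~\ref{ThT} (or Theorem~\ref{Thm1}) to conclude that the maximizer is the flat model $\E_{1,k}$.

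Your ``main obstacle'' paragraph, however, manufactures a difficulty that does not exist and then resolves it with a false statement. You assert that ``the integer $m$ is not an invariant of the distribution alone'' and that ``only the total growth vector matters.'' This is wrong: by Proposition~\ref{L1} the Carnot algebra of a non-degenerate Monge equation is $\mathfrak{f}_{m,n}$, and its growth vector (computed in Section~\ref{S31}) is $(2,1,\underbrace{2,\dots,2}_m,\underbrace{1,\dots,1}_{n-m-1})$ for $m<n$, which visibly records $m$. Distributions arising from different pairs $(m,n)$ with the same sum are therefore never locally equivalent. Fortunately this error is irrelevant to the argument: the comparison across different $m$ requires no equivalence of distributions at all, only the comparison of the numbers $\dim\sym(\Delta)\le\dim\mathfrak{t}_{m,n}$, which your second paragraph already carries out correctly. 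The right resolution of your worry is simply that internal equivalence of equations and local equivalence of their distributions are the same notion by definition, so Theorem~\ref{Thm1} applies directly within each pair $(m,n)$, and one then takes the maximum over pairs. Delete the obstacle paragraph and the proof is clean.
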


The non-degeneracy condition $F_{z^{(n)}z^{(n)}}\ne0$
is related to the finite-dimensionality of $\sym(\E)$
and will be explained in Section \ref{S2}. It implies that the growth vector of the
corresponding distribution starts $(2,1,2,\dots)$ and that the distribution is not de-prolongable.

Let us note that, according to \cite{AKO}, internal symmetries for equations of type (\ref{E1}),
in the particular case $m=n=2$, coincide with first order generalized symmetries.
 % , but the technique of loc.cit.\ does not imply a general statement.
We can prove much more than this, namely, we prove a new Lie-B\"acklund type theorem.

 \begin{theorem}\label{Thm2}
For equations of type (\ref{E1}), with $F_{z^{(n)}z^{(n)}}\ne0$, $0<m\le n$ and $(m,n)$ being different
from the exceptional cases $(1,1)$ and $(1,2)$, the algebra of internal symmetries coincides with
the algebra of external symmetries, induced by Lie transformations of the ambient mixed jet space
$J^{m,n}(\R,\R^2)$.
 \end{theorem}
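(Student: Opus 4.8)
The plan is to prove the two inclusions separately. The inclusion of external symmetries into internal ones is immediate: any local Lie transformation of $J^{m,n}(\R,\R^2)$ preserving $\E$ restricts to a symmetry of $(\E,\Delta)$, since $\Delta=\C\cap T\E$. The content of the theorem is the reverse inclusion, and the strategy I would take is to show that the embedding $\E\hookrightarrow J^{m,n}(\R,\R^2)$ together with the ambient Cartan distribution $\C$ can be canonically reconstructed from the intrinsic pair $(\E,\Delta)$, so that every internal symmetry automatically respects this reconstruction and hence extends to an ambient Lie transformation.

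First I would record the structure of the weak derived flag $\Delta=\Delta^{(1)}\subsetneq\Delta^{(2)}\subsetneq\cdots\subsetneq\Delta^{(N)}=T\E$ on $\E$. By Remark \ref{Rmk1} the hypothesis $F_{z^{(n)}z^{(n)}}\ne0$ makes $(\E,\Delta)$ strongly regular, so this flag has constant rank and begins with growth vector $(2,1,2,\dots)$; the extra direction entering $\Delta^{(3)}$ (essentially $\p_{y_{m-1}}$) is forced precisely by $F_{z^{(n)}z^{(n)}}\ne0$, and it is this that interlocks the $y$-tower with the $z$-tower and makes both recoverable. Using the explicit symbol and the Tanaka algebra $\mathfrak t_{m,n}$ from Section \ref{S32}, I would then identify the Cauchy characteristic systems $\op{Ch}(\Delta^{(i)})$ of the successive members of the flag; these are integrable, canonically attached to $(\E,\Delta)$, and of the dimensions dictated by the symbol, so their leaf spaces assemble into a canonical tower of fibrations $\E=\E^0\to\E^1\to\cdots$ reproducing the natural projections $J^{m,n}(\R,\R^2)\to J^{m,n-1}(\R,\R^2)\to\cdots$ of the mixed jet space (with the bookkeeping appropriate to the case $m<n$, where the $y$-tower terminates first). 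Together with the hypersurface relation $y^{(m)}=F$, this exhibits $\E$, with its distribution $\Delta$, as a canonically embedded submanifold of $J^{m,n}(\R,\R^2)$ recovered purely from intrinsic data; this is the core geometric step, and where the Cartan connection of Remark \ref{Rmk1} supplies any finer canonical structure that the bare derived flag does not already pin down.

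Granting the reconstruction, the conclusion is formal. An internal symmetry is by definition an automorphism of $(\E,\Delta)$, hence preserves every $\Delta^{(i)}$ and every $\op{Ch}(\Delta^{(i)})$, and therefore descends along the canonical tower to compatible automorphisms of each $\E^j$. On the base these are prolongations of point or contact transformations by the classical Lie-B\"acklund theorem for the low mixed jet space; prolonging back up the tower produces a Lie transformation of $J^{m,n}(\R,\R^2)$ preserving the reconstructed hypersurface $\E$ and restricting to the original symmetry, which is thus external. The infinitesimal statement follows by the same reasoning applied to symmetry vector fields, or by differentiating a one-parameter family. Alternatively, one can argue directly in coordinates: analyse the linear PDE system that an internal-symmetry vector field on $\E$ must satisfy and show, using $F_{z^{(n)}z^{(n)}}\ne0$, that all of its coefficients except a controlled few are independent of $z^{(n)}$ — exactly the condition for the field to be the restriction of a prolonged field on $J^{m,n}(\R,\R^2)$.

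The main obstacle is the reconstruction step itself. One must verify that $\op{Ch}(\Delta^{(i)})$ has precisely the rank predicted by the symbol for each relevant $i$, so that $\E^0\to\E^1\to\cdots$ is genuinely a tower of jet-type (affine-bundle) fibrations with no spurious identifications, and one must simultaneously control the asymmetry between the $y$-tower of length $m$ and the $z$-tower of length $n$ and the fact that $\E$ is only a hypersurface in $J^{m,n}(\R,\R^2)$, not the full jet space. This is where strong regularity (Remark \ref{Rmk1}) and the explicit shape of $\mathfrak t_{m,n}$ (Section \ref{S32}) do the work. Finally, the exceptional pairs $(1,1)$ and $(1,2)$ must be excluded because there the symbol degenerates — the derived flag is shorter and carries extra Cauchy characteristics — the reconstruction collapses, and genuinely different symmetry behaviour appears (the infinite-dimensional contact algebra, respectively $\mathfrak g_2$), so the statement as phrased no longer holds.
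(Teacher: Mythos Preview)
Your overall architecture—reconstruct the embedding $\E\hookrightarrow J^{m,n}$ from intrinsic data, then invoke a Lie--B\"acklund theorem on the base—matches the paper's. The gap is in the reconstruction step. You propose to carry it out with the Cauchy characteristic systems $\op{Ch}(\Delta^{(i)})$ of the successive derived distributions. The paper explicitly warns that this fails: just before its proof it remarks that, contrary to other Lie--B\"acklund type theorems, it is not enough here to study Cauchy characteristics of the derived distributions or characteristic subspaces; and in the introduction it flags this as the first Lie--B\"acklund result that cannot be derived by the usual de-prolongation arguments. Concretely, for growth $(2,1,2,\dots)$ the vertical line $\langle\p_{z_n}\rangle$ that you need to isolate is \emph{not} a Cauchy characteristic of the low $\C_i$: for instance $[\p_{z_n},\,\p_{z_{n-i}}+F_{z_n}\p_{y_{m-i}}]=F_{z_nz_n}\p_{y_{m-i}}\notin\C_{i+1}$. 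So the tower $\E^0\to\E^1\to\cdots$ you describe does not assemble from Cauchy characteristics alone, and your hedge about the Cartan connection ``supplying finer canonical structure'' is too vague to fill the hole.

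What the paper actually does is short but different in kind. In the Carnot algebra $\mathfrak f_{m,n}$ there is, up to scale, a unique $v\in g_{-1}$ with $\op{ad}_v^3=0$, namely $e_1'$; this purely algebraic nilpotency condition singles out a canonical line subdistribution $\mathcal L\subset\C$, realised externally as $\langle\p_{z_n}\rangle$. The quotient $\E/\mathcal L$ is then identified with $J^{m-1,n-1}(\R,\R^2)$, and the canonical rank-$4$ distribution $\mathcal H=\C_2\oplus[e_1',\C_2]$ (again defined via the graded algebra, not via Cauchy characteristics) pushes down to the Cartan distribution $\C^{m-1,n-1}$. Any internal equivalence therefore descends in one step to a Lie transformation of $J^{m-1,n-1}$; a separately proved Lie--B\"acklund theorem for \emph{mixed} jet spaces (not just the classical one you invoke) shows this is the prolongation of a Lie transformation of $J^{0,n-m}$, and a short prolongation argument then checks that lifting back recovers the original map. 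The missing idea in your proposal is precisely this use of the graded nilpotent structure—the condition $\op{ad}_{e_1'}^3=0$—to pin down $\mathcal L$ and $\mathcal H$.
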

Lie transformations of $J^{m,n}(\R,\R^2)$ will be described in Section \ref{S4}.
Let us remark that Theorem \ref{Thm2}
is the first instance of Lie-B\"acklund type theorem whose proof entirely depends on the structure of
the graded Lie algebra associated to the distribution, and cannot be derived by the usual de-prolongation arguments.

To prove our results we exploit and elaborate upon the Tanaka theory \cite{T}.
Distributions corresponding to equation (\ref{E2}) are flat in its sense.
There exist however underdetermined ODE systems, for which the respective rank 2 distributions
cannot be realized as non-degenerate Monge equations. Their flat models can nevertheless be calculated
via the same technique.

In Section \ref{S5} we relate the algebraic theory of central extensions of Lie algebras to
the geometric theory of integrable extensions.
We provide a very simple and constructive description of all flat models.

 \begin{theorem}\label{Thm3}
Any flat rank 2 distribution with growth vector $(2,1,2,\dots)$
(be it represented as a nondegenerate single Monge equation or underdetermined system of ODEs)
is a result of successive integrable extensions of the Hilbert-Cartan equation (\ref{HC}).
 \end{theorem}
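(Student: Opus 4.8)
The plan is to translate the statement into the language of Tanaka symbols and then run a short induction on dimension. Recall (after Tanaka~\cite{T}) that, up to local equivalence, a flat rank $2$ distribution is the left-invariant distribution spanned by $\mathfrak g_{-1}$ on the simply connected nilpotent Lie group whose Lie algebra is a fundamental (i.e.\ generated in degree $-1$) negatively graded nilpotent Lie algebra $\mathfrak m=\mathfrak g_{-\mu}\oplus\dots\oplus\mathfrak g_{-1}$, and that two flat distributions are equivalent exactly when their symbols $\mathfrak m$ are isomorphic as graded Lie algebras; in particular this correspondence is blind to whether the distribution came from a nondegenerate single Monge equation or from an underdetermined system of ODEs. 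The hypothesis that the growth vector starts $(2,1,2,\dots)$ translates into $\dim\mathfrak g_{-1}=2$ and $\dim\mathfrak g_{-2}=1$, and (since $\mathfrak g_{-3}=[\mathfrak g_{-1},\mathfrak g_{-2}]$ has dimension at most $2$) it forces $\dim\mathfrak g_{-3}=2$ as well. So Theorem~\ref{Thm3} becomes the purely algebraic assertion that every such $\mathfrak m$ is obtained from the $(2,3,5)$ symbol of \eqref{HC} by a chain of one-dimensional graded central extensions, which, via the dictionary of Section~\ref{S5} relating central extensions of the symbol to integrable extensions of the flat model, is precisely the claim.

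I would then induct on $\dim\mathfrak m$. Base case $\dim\mathfrak m=5$: the bound $\dim\mathfrak g_{-3}\le2$ forces depth $\mu=3$ and graded dimensions $(2,1,2)$, so $\mathfrak m$ is the step-$3$ free nilpotent Lie algebra on two generators, which is the symbol of the $(2,3,5)$ distribution, i.e.\ of $\E_{1,2}$, the Hilbert-Cartan equation~\eqref{HC}; nothing needs to be extended. Inductive step $\dim\mathfrak m>5$: then necessarily $\mu\ge4$, so the deepest component $\mathfrak g_{-\mu}$ is nonzero, and being deepest it lies in the center of $\mathfrak m$. Choose $0\ne z\in\mathfrak g_{-\mu}$ and set $\mathfrak m'=\mathfrak m/\mathbb{R}z$; then $\mathbb{R}z$ is a graded central ideal, so $\mathfrak m$ is a one-dimensional graded central extension of $\mathfrak m'$. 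Moreover $\mathfrak m'$ is again fundamental (a quotient of a fundamental algebra by a graded ideal), has unchanged components in degrees $-1,-2,-3$ because $\mu\ge4$, hence still satisfies the growth hypothesis, and $\dim\mathfrak m'=\dim\mathfrak m-1\ge5$. By the induction hypothesis the flat model of $\mathfrak m'$ is a successive integrable extension of \eqref{HC}; by Section~\ref{S5} the flat model of $\mathfrak m$ is one further integrable extension of it (since the degree $-1$ component is untouched, the rank $2$ distribution genuinely extends), and concatenating the two finishes the proof.

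The algebraic content here is minimal: the only structural fact used is that the deepest graded piece of a nilpotent graded Lie algebra is central, so a line can always be split off from it without disturbing the lower-degree components, and in particular without leaving the class of distributions with growth vector $(2,1,2,\dots)$. Accordingly, the main obstacle is not this induction but the bridge supplied by Section~\ref{S5}: one must isolate the correct notion of an integrable extension (covering) of a rank $2$ distribution, prove that for flat distributions such extensions correspond exactly to one-dimensional graded central extensions of the Tanaka symbol, and verify that the extended distribution is again flat with the predicted growth vector. Granting that, the tower over \eqref{HC} is simply read off from the grading of $\mathfrak m$, peeling $\mathfrak g_{-\mu}$, then the next degree, and so on down to the free $(2,1,2)$ symbol; the only bookkeeping point to check along the way is that the hypothesis on the third growth increment is exactly what singles out the Hilbert-Cartan equation as the base of the induction.
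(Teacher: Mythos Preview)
Your proposal is correct and follows essentially the same route as the paper: the paper's own proof is the single sentence after Theorem~\ref{thm17}, namely that any fundamental GNLA with gradings $(2,1,2,\dots)$ is a sequence of successive central extensions of the unique $(2,1,2)$ GNLA, and then the correspondence of Section~\ref{S5} converts this into the desired chain of integrable extensions over $\E_{1,2}$. You have simply made the implicit induction explicit---peeling off a line in the deepest graded piece $\mathfrak g_{-\mu}$ (which is central because $\mu$ is maximal, and gives a nontrivial extension because fundamentality forces $\mathfrak g_{-\mu}\subset[\mathfrak m,\mathfrak m]$)---and checked the bookkeeping that the quotient remains fundamental with unchanged $(2,1,2)$ start since $\mu\ge4$.
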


This allows us to calculate many examples of flat rank 2 distributions and the
corresponding ODE systems.

At the end of the paper we show that our theory does not restrict to ODEs only. We
demonstrate relations of our results to integration of determined and overdetermined PDEs.
In particular, we construct a flat non-trivial integrable extension of Cartan's
most symmetric involutive PDE model of 1910.

The results of this paper indicate that the method of E.\,Cartan from his seminal
paper \cite{C1} can be extended to a much broader class of PDE systems.
This generalization and further applications to the method of Darboux \cite{AFV}
and compatible overdetermined systems \cite{KL}
will be presented in a separate publication.

\medskip

\textsc{Acknowledgment.}
We are grateful to Valentin Lychagin for discussions during the initial stage
of the project. Hospitality of Institut Mittag-Leffler (Stockholm) in 2007, MFO (Oberwolfach)
in 2008 and Banach center (Warsaw) in 2009 is acknowledged.
I.A.\ was supported by NSF grant DMS-0713830.
The computations in Sections \ref{S5} and \ref{S6}
were done, in part, using the \textsf{DifferentialGeometry} package of \textsc{Maple}.

\section{Tanaka algebra of a distribution}\label{S1}

Given a distribution $\Delta\subset TM$, its \emph{weak derived flag}
$\{\Delta_i\}$ is given via the module of its sections by
$\Gamma(\Delta_{i+1})=[\Gamma(\Delta),\Gamma(\Delta_i)]$ with $\Delta_1=\Delta$.
The \emph{strong derived flag} is given by the filtration $\nabla_{i+1}=\p\nabla_i$
with $\Gamma(\nabla_{i+1})=[\Gamma(\nabla_i),\Gamma(\nabla_i)]$ and $\nabla_1=\Delta$.

We will assume throughout this paper that the distribution is \emph{completely non-holonomic},
i.e. $\Delta_k=TM$ for some $k$, and we also assume that $\Delta$ is \emph{regular}, so that the ranks
of both $\Delta_i$ and $\nabla_i$ are constant.

The quotient sheaf $\m=\oplus_{i<0}g_i$, $g_i=\Delta_{-i}/\Delta_{-i-1}$ has a natural structure of the graded
nilpotent Lie algebra (GNLA) at any point $x\in M$. The bracket on $\m$ is induced by the commutator
of vector fields on $M$. $\Delta$ is called \emph{strongly regular} if the GNLA $\m(x)$ does not depend
on $x\in M$ (but we do not impose this property a priori).

The \emph{growth vector} of $\Delta=g_{-1}$ is the sequence of dimensions\linebreak
 $(\dim g_{-1},\dim g_{-2}\dots)$.
(In other sources, the growth vector is one of the sequences $(\dim\Delta_1,\dim\Delta_2,\dots)$
or $(\dim\nabla_1,\dim\nabla_2,\dots)$ but, as our main tool is the Tanaka theory, we choose this
convention).

The Tanaka prolongation $\g=\hat\m$ is the GNLA with negative graded part $\m$, whose non-negative
part is defined successively by the rule \cite{T,Y}: $g_k$ consists of all
$u\in \bigoplus\limits_{i<0}g_{k+i}\ot g_i^*$, acting as derivations:
$u([X,Y])=[u(X),Y]+[X,u(Y)]$, $X,Y\in\m$. Notice that since $\m$ is fundamental, i.e.
$g_{-1}$ generates the whole GNLA $\m$, the grading $k$ homomorphism $u$
is uniquely determined by the restriction $u:g_{-1}\to g_{k-1}$.

The space $\g=\oplus\,g_i$ is naturally a graded Lie algebra, which we will call
the {\em Tanaka algebra\/} of $\Delta$ (it depends on the point $x\in M$, unless $\Delta$ is strongly regular).
By contrast, the GNLA $\m=\oplus_{i<0}g_i$ will be called the {\em Carnot algebra\/} of $\Delta$
(it is also called Levi-Tanaka or symbol algebra in the literature).

We wish to indicate that $\g=\hat\m$ can alternatively be defined via right extensions for modules, just as
right extensions\footnote{Both fundamental extension constructions in the theory of Lie algebras
play a role in this paper: right extensions are important for Tanaka prolongation, while central extensions
appear in Section \ref{S5} in connection with integral extensions.}
for Lie algebras are governed by $H^1(\m,\m)$ \cite{SL,F}.
First notice that the cohomology $H^i$ of GNLA is graded.
We denote the grading by a subscript $H^i=\oplus H^i_k$.
 % (this graded cohomology is different from bi-graded generalized Spencer cohomology
 % $H^{i,*}=\oplus H^{i,k}$ from \cite{T,Y}).

Then $g_0=H^1_0(\m,\m)$ and we define $\g^0=\m\oplus g_0$. On the next step we get
$g_1=H^1_1(\m,\g^0)$ and we define $\g^1=\g^0\oplus g_1$. This is
equivalent to adding variables $w_i$ to kill the cohomology classes $[\oo_i]\in H^1_1(\m,\g^0)$,
that is, let $d w_i=\oo_i$. The $\m$-action naturally extends to $\g^1$ making it
into a representation and we continue: $\g^p=g_{-k}\oplus\dots\oplus g_p$ with
$g_p=H^1_p(\m,\g^{p-1})$. The prolongation of $\m$ is $\g=\lim\g^p$.

 %  \begin{rk}
 % The idea of killing cohomology, going to a larger space, killing again etc
 % is a standard trick in homological algebra. Thus prolongation of Carnot
 % algebra to Tanaka algebra is an analog of classifying spaces construction.
 %  \end{rk}

The importance of the Tanaka algebra is in majorizing the symmetry algebra $\sym(\Delta)$ of
the distribution. The following is the Corollary of Theorem 8.4 from \cite{T}:
 \begin{theorem}\label{ThT}
Let $(M,\Delta)$ be a manifold equipped with distribution, $\m$ its
Carnot GNLA and $\g=\hat\m$ the Tanaka algebra. Then
 $$
\dim\sym(\Delta)\le\dim\g.
 $$
Moreover, provided $\g$ is finite-dimensional, this inequality is an equality
if and only if the distribution $\Delta$ is locally flat.
 \end{theorem}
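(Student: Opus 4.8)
The plan is to deduce this statement from Tanaka's Theorem 8.4 in \cite{T} together with the structural facts already assembled above. First I would recall that a symmetry of $\Delta$ is, by definition, a (local) diffeomorphism of $M$ preserving the distribution, so that the Lie algebra $\sym(\Delta)$ consists of vector fields $X$ on $M$ with $[X,\Gamma(\Delta)]\subset\Gamma(\Delta)$. Fixing a point $x\in M$ and passing to the associated graded objects, any such $X$ is determined, up to terms whose effect vanishes at $x$, by a finite jet at $x$; the point of Tanaka's theory is to bound this jet order and, simultaneously, the value of $X$ at each filtration level by an element of the corresponding graded piece $g_k$ of $\g=\hat\m$. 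The key structural input — already stated in the excerpt — is that because $\m$ is fundamental, a grading-$k$ element of the prolongation is uniquely determined by its restriction $g_{-1}\to g_{k-1}$, so the prolongation cannot "run away": once $g_p=0$ for some $p\ge0$ all higher $g_k$ vanish as well, and in the finite-dimensional case the prolongation genuinely terminates.

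The main steps would then be: (i) For $X\in\sym(\Delta)$, define for each $k\ge -1$ the induced linear map on the symbol which $X$ determines at $x$ modulo lower-order data; verify that the derivation identity $u([Y,Z])=[u(Y),Z]+[Y,u(Z)]$ holds because $X$ preserves $\Delta$ and the bracket on $\m$ is the one induced by commutators of vector fields. This places each such component in $g_k$. (ii) Show that the resulting map $\sym(\Delta)\to\prod_k g_k=\g$ is injective: an infinitesimal symmetry all of whose graded components at $x$ vanish must vanish identically near $x$, which follows from a unique-continuation argument using complete non-holonomicity (the bracket-generating property lets one propagate vanishing from $x$ along flows tangent to $\Delta$). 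Together (i)–(ii) give $\dim\sym(\Delta)\le\dim\g$, which is the inequality; this part is essentially a transcription of Theorem 8.4 of \cite{T} into the present notation, using that $\g$ is exactly the Tanaka prolongation $\hat\m$ built level by level via $g_p=H^1_p(\m,\g^{p-1})$ as recalled above.

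For the equality statement, suppose first that $\Delta$ is locally flat, i.e.\ locally equivalent to the left-invariant distribution $\mathfrak{m}$ on the simply connected nilpotent Lie group with Lie algebra $\m$. Then by Tanaka's construction the canonical frame bundle for $\Delta$ carries a flat Cartan connection modelled on $(\g,\g_{\ge0})$, and its curvature being zero forces the symmetry algebra to be all of $\g$ — concretely one writes down the $\dim\g$ independent infinitesimal automorphisms explicitly from the group structure, so $\dim\sym(\Delta)=\dim\g$. Conversely, if the bound is attained, then the Cartan connection has a $\dim\g$-dimensional automorphism algebra; since $\dim\g$ is also the dimension of the total space of the relevant principal bundle (this is where finite-dimensionality of $\g$ is used), the automorphism group acts transitively with discrete stabiliser, which forces the curvature function to be constant along the fibres and hence — by the structure equations and homogeneity — identically zero, so $\Delta$ is flat. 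I expect the genuine obstacle to be exactly this last implication: making precise that maximal symmetry forces vanishing curvature requires invoking the full machinery of Tanaka's normal Cartan connection (normality of the connection, the interpretation of its curvature as a function on the frame bundle with values in a Spencer-type cohomology, and the homogeneity argument that a constant curvature of positive weight must vanish). The cleanest route is to quote Theorem 8.4 of \cite{T} for this equivalence rather than reprove it, and simply verify that our hypotheses — regularity, complete non-holonomicity, and $\dim\g<\infty$ — are precisely those under which that theorem applies.
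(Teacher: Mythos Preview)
Your proposal is correct and aligns with the paper's treatment: both ultimately defer to Tanaka's Theorem~8.4 in \cite{T} for the inequality, and both justify the equality-iff-flat statement via the parallelism/Cartan-connection construction on the prolongation bundle. The paper is in fact even more terse than you are---it states the theorem as a direct corollary of \cite{T}, with only three remarks: that the result as stated assumes strong regularity, that the ``only if'' direction of the equality clause is not literally in \cite{T} but follows from the absolute parallelism on the prolongation manifold, and that ``locally flat'' means vanishing structure functions, equivalently local equivalence with the standard model on the nilpotent group of $\g$. Your sketch of the injectivity of $\sym(\Delta)\to\g$ and the homogeneity/curvature-vanishing argument is reasonable but goes beyond what the paper supplies; note that the paper phrases the construction in terms of an \emph{absolute parallelism} rather than a normal Cartan connection, which is closer to Tanaka's original language and avoids invoking normality and Spencer cohomology.
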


Some remarks concerning this theorem are of order. First, this theorem is
formulated (and proved in \cite{T}) for strongly regular systems,
and this will suffice for our purpose.
But it remains true if we understand the right hand size as $\sup_x\dim\g(x)$.

Second, the last part of the theorem is not formulated in \cite{T} as we have, but it follows
from the construction of an absolute parallelism on the prolongation manifold of the structure.

Finally, locally flat means that the structure functions of the absolute parallelism vanish. This is
equivalent to the claim that the distribution is locally diffeomorphic
to the standard distribution on the affine realization of Lie group corresponding to the GNLA $\g$, see \cite{T}.

\section{Finite-dimensionality of the symmetry algebra}\label{S2}

In this section we give a sufficient condition for $\dim\g$, and hence $\dim\sym(\Delta)$,
to be finite in the case $\dim\Delta=2$. The statement has been surprisingly unknown until now.

Notice that for a rank 2 non-holonomic distribution the growth vector always begins like $(2,1,\dots)$
and that the first number after a sequence of 1\!'s can be only 2.

 \begin{theorem}\label{thm1}
Let $\Delta$ be a completely non-holonomic rank 2 distribution with growth vector $(2,1,\dots,1,2,\dots)$.
Then the algebra $\sym(\Delta)$ is finite-dimensional.
 \end{theorem}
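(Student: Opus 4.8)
The plan is to bound $\dim\g = \dim\hat\m$ by showing that the Tanaka prolongation terminates, using Theorem~\ref{ThT} to then conclude $\dim\sym(\Delta)<\infty$. The key structural input is the shape of the Carnot algebra $\m = \oplus_{i<0} g_i$: by hypothesis the growth vector is $(2,1,\dots,1,2,\dots)$, so $\dim g_{-1}=2$, $\dim g_{-2}=\dim g_{-3}=\dots=\dim g_{-s}=1$ for some $s\ge 1$, and $\dim g_{-s-1}=2$, with the remaining $g_{-i}$ of unspecified (but constant, by regularity) dimensions. First I would fix a graded basis: let $e_1,e_2$ span $g_{-1}$, chosen so that the iterated brackets $e_3=[e_1,e_2]\in g_{-2}$, $e_4=[e_1,e_3]\in g_{-3},\dots$ generate the string of $1$-dimensional layers up to $g_{-s}$. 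The crucial point is that in this range the bracket structure is \emph{rigid}: each $g_{-i-1}$ for $1\le i<s$ is spanned by $[e_1,g_{-i}]$, while $[e_2, g_{-i}]\subset g_{-i-1}$ must be a multiple of that same generator (it cannot be independent, as $\dim g_{-i-1}=1$), and by a suitable normalization of the basis one can arrange $[e_2,e_j]=0$ for the generators $e_j$, $j\ge 3$, up through layer $s$. (This is the standard normal form for a ``chain'' of one-dimensional layers; it uses only the Jacobi identity and the dimension constraints.)

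Next I would analyze a degree-$k$ prolongation element $u\in g_k$, $k\ge 0$. As noted in Section~\ref{S1}, $u$ is determined by its restriction $u:g_{-1}\to g_{k-1}$, i.e.\ by the two values $u(e_1), u(e_2)$, subject to the derivation constraint $u([X,Y])=[u(X),Y]+[X,u(Y)]$ for all $X,Y\in\m$. The main step is to propagate this constraint along the chain. Writing $a=u(e_1)$ and $b=u(e_2)$, the derivation rule applied to $e_3=[e_1,e_2]$ gives $u(e_3)=[a,e_2]+[e_1,b]$; applied to $e_4=[e_1,e_3]$ it gives $u(e_4)=[a,e_3]+[e_1,u(e_3)]$, and so on, so that \emph{all} values $u(e_j)$ for $3\le j\le s+1$ are determined by $a$ and $b$. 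In particular $u$ is forced to act compatibly with the two independent brackets landing in $g_{-s-1}$ (the $2$-dimensional layer at the end of the chain); this over-determination is what kills the prolongation. Concretely, the constraint that $u$ be a derivation at the step where a second generator appears in $g_{-s-1}$ — combined with $[e_2,e_j]=0$ along the chain — forces, for $k$ large enough, a linear relation expressing the ``top'' components of $a$ in terms of lower ones, so that only finitely many components of $a,b$ can be free, and beyond some degree $k_0$ the only derivation is $u=0$.

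The cleanest way to package this is: show that the map $g_k\to g_{k-1}\oplus g_{k-1}$, $u\mapsto(u(e_1),u(e_2))$, is injective (already noted), and that its image is cut out by a system of linear equations whose rank grows without bound — equivalently, reduce to the rank-$2$ ``model chain'' Carnot algebra $\tilde\m$ (free-truncated or the specific chain nilpotent Lie algebra with the given $s$), whose Tanaka prolongation is a known finite-dimensional Lie algebra, and observe that $\g(\Delta)$ injects into $\hat{\tilde\m}$ because adding generators to $\m$ in higher negative degrees only adds \emph{more} constraints on prolongation elements, never fewer. I expect the main obstacle to be exactly this last monotonicity/comparison point: making precise that a ``larger'' Carnot algebra (one with the same low-degree chain but extra layers below) has a \emph{smaller or equal} prolongation, which requires care because new negative-degree elements contribute both new values $u$ can take and new bracket relations $u$ must respect. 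Handling the case $s=1$ separately (growth vector $(2,1,2,\dots)$, the case relevant to Monge equations) is straightforward and can be used as the base of an induction on $s$, or treated directly since then $g_{-2}$ is spanned by $[e_1,e_2]$ and the $2$-dimensionality of $g_{-3}$ gives the needed rigidity immediately.
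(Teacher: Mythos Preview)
Your proposal has a genuine gap, and it is exactly the one you flag yourself: the monotonicity/comparison step. You want to inject $\hat\m$ into $\hat{\tilde\m}$ for a ``model chain'' $\tilde\m$ obtained by truncating $\m$ at the first $2$-dimensional layer past $g_{-2}$. But there is no natural Lie algebra map between $\m$ and such a truncation in either direction that would carry derivations to derivations, and the heuristic ``more negative layers means more constraints'' is not a proof: adding negative layers also enlarges the target of a degree-$k$ derivation (the spaces $g_{k+i}$ for $i<0$ grow), so a priori prolongation elements gain freedom as well as constraints. Your alternative direct argument (``for $k$ large enough a linear relation forces $u=0$'') is only a sketch; you never exhibit the relation or say why the rank of the constraint system eventually exceeds the number of parameters. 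As written, neither route closes.

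The paper bypasses all of this with a single structural criterion of Tanaka (Corollary~2 of Theorem~11.1 in \cite{T}): if
\[
\h_0=\{v\in g_0:\ [v,g_r]=0\ \text{for all }r<-1\}=0,
\]
then the full prolongation $\g=\hat\m$ is finite-dimensional. This reduces the problem to a degree-$0$ computation, and the growth hypothesis makes that computation immediate. Choose $e_1,e_1'$ spanning $g_{-1}$ so that $e_2=[e_1,e_1'],\ e_3=[e_1,e_2],\ \dots,\ e_{k-1}=[e_1,e_{k-2}]$ span the one-dimensional layers, and set $e_k=[e_1,e_{k-1}]$, $e_k'=[e_1',e_{k-1}]$; since $\dim g_{-k}=2$ and $g_{-k}=[g_{-1},g_{1-k}]$, these are a basis of $g_{-k}$. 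For $v\in\h_0$ write $[v,e_1]=a e_1+b e_1'$ and $[v,e_1']=c e_1+d e_1'$. Because $[v,e_{k-1}]=0$, the derivation rule gives $0=[v,e_k]=[a e_1+b e_1',\,e_{k-1}]=a e_k+b e_k'$ and likewise $0=[v,e_k']=c e_k+d e_k'$, so $a=b=c=d=0$ and $v=0$. That is the whole argument. The point you were missing is that one does not need to control $g_k$ for all $k\ge 0$; Tanaka's criterion lets the single vanishing $\h_0=0$ do that work.
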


 \begin{proof}
Let $\g=\oplus g_i$ be the Tanaka algebra of $\Delta$ evaluated at any point $x\in M$.
In view of Theorem \ref{ThT} it is enough to verify that $\dim\g<\infty$.

To this end we use Corollary 2 of Theorem 11.1 from \cite{T}, which states that if the subalgebra
 $$
\h_0=\{v\in g_0:[v,g_r]=0\ \forall r<-1\}
 $$
is zero, then $\g$ is finite-dimensional.\footnote{The meaning of the condition
$\h_0=0$ is this: the complex characteristic variety of the Lie equation for
$\op{sym}(\Delta)$ is empty.}

Let $-k$ be the grading of the second two-dimensional space $g_{-k}$ in $\m$ ($k>3$).
Choose a basis $e_1,e_1'$ of $g_{-1}$ that generates basis elements
$e_2=[e_1,e_1']$ of $g_{-2}$, $e_3=[e_1,e_2]$ of $g_{-3}$, \dots, $e_{k-1}=[e_1,e_{k-2}]$
of $g_{1-k}$ and $e_k=[e_1,e_{k-1}]$, $e_k'=[e_1',e_{k-1}]$ of $g_{-k}$.

Let $v\in\h_0$ and write $[v,e_1]=a\,e_1+b\,e_1'$, $[v,e_1']=c\,e_1+d\,e_1'$.
Then $[v,e_{k-1}]=0$ implies $[v,e_k]=[a\,e_1+b\,e_1',e_{k-1}]=a\,e_k+b\,e_k'=0$
and $[v,e_k']=[c\,e_1+d\,e_1',e_{k-1}]=c\,e_k+d\,e_k'=0$. Whence $a=b=c=d=0$
and therefore $v=0$. The claim follows.
 \end{proof}

Consider the growth vector of $\Delta$. If it starts $(2,1,2,\dots)$ then,
by Theorem \ref{thm1}, the symmetry algebra is finite-dimensional.
The other possibility, namely $(2,1,1,\dots)$, has a clear geometric meaning
which we provide in the next theorem.

Recall that the (geometric) prolongation of a rank 2 distribution $\bar\Delta$ on $\bar M$ is
(we consider only the local picture) the rank 2 distribution
$\Delta=\langle \cos t\cdot U+\sin t\cdot V,\p_t\rangle$ on $M=\bar M\times S^1$, where $U,V$ are
generators of $\bar\Delta$ and $t$ the coordinate on $S^1$.

If the distribution $\bar\Delta$ has preferred (vertical) section
$V\in\Gamma(\bar\Delta)$, then the (affine) prolongation $\Delta=\bar\Delta^{(1)}$
is simply $\Delta=\langle U+t\,V,\p_t\rangle$ on $M=\bar M\times\R^1$, where $t$ is the coordinate on $\R^1$.
This coincides with (Spencer) prolongation in the geometric theory of PDEs (\cite{KLV,MZ}).

The reverse operation is not generally possible but, in the case when $\Delta=\bar\Delta^{(1)}$,
we shall call the operation $\Delta\mapsto\bar\Delta$ de-prolongation.

 \begin{theorem}\label{ThDeProl}
Let $\Delta$ be a rank 2 non-holonomic distribution with the derived rank 3 distribution
$\Delta_2=[\Delta,\Delta]$. The growth vector of $\Delta$ is $(2,1,1,\dots)$
if and only if there exists a vector field $\xi\in\Gamma(\Delta)$ which is a
Cauchy characteristic for the distribution $\Delta_2$.
In this, and only in this case, $\Delta$ can be locally de-prolonged.
 \end{theorem}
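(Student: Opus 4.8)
The plan is to analyze the Cauchy characteristic space of $\Delta_2$ and relate it to the growth behaviour of $\Delta$. Recall that the Cauchy characteristic space of a distribution $D$ at a point is $\op{Char}(D) = \{ \xi \in D : [\xi, \Gamma(D)] \subset \Gamma(D)\}$; it is an integrable subdistribution of $D$ when of constant rank. First I would observe that, since $\Delta$ is rank 2 and non-holonomic, $\Delta_2 = [\Delta,\Delta]$ has rank 3, and $\Delta_3 = [\Delta, \Delta_2]$ has rank either 3 (the growth vector starts $(2,1,1,\dots)$) or 4 or 5. The key computation is to express $\Delta_3/\Delta_2$ in terms of brackets: choosing local generators $U, V$ of $\Delta$ so that $W = [U,V]$ completes a frame of $\Delta_2$, the rank of $\Delta_3$ is governed by the two vectors $[U,W] \bmod \Delta_2$ and $[V,W] \bmod \Delta_2$. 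The growth vector is $(2,1,1,\dots)$ precisely when both of these lie in $\Delta_2$, i.e. when $\Delta_3 = \Delta_2$.

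Next I would show the equivalence with the existence of a Cauchy characteristic $\xi \in \Gamma(\Delta)$ for $\Delta_2$. If such $\xi$ exists, then $[\xi, \Gamma(\Delta_2)] \subset \Gamma(\Delta_2)$; since $\Delta \subset \Delta_2$ and $\Delta_2$ is generated by $\Delta$ together with one more bracket, a short argument shows that $\Delta_3 = [\Delta, \Delta_2]$ is generated modulo $\Delta_2$ by brackets of the complementary generator of $\Delta$ with $W$ only — but rank considerations (the rank of $\Delta_3/\Delta_2$ can be at most $\dim(\Delta/\langle\xi\rangle) = 1$, and in fact one checks it must then be $0$ using the Jacobi identity and $[\xi,\Delta]\subset\Delta_2$) force $\Delta_3 = \Delta_2$, hence the growth vector is $(2,1,1,\dots)$. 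Conversely, if the growth vector is $(2,1,1,\dots)$, then $\Delta_2$ is a rank 3 distribution with $[\Delta_2,\Delta_2] = \Delta_3 = \Delta_2$, so $\Delta_2$ is integrable; I would then invoke Engel-type normal form or a direct frame computation to exhibit a line field inside $\Delta$ lying in $\op{Char}(\Delta_2)$. The cleanest route is: $\op{Char}(\Delta_2)$ has corank at most $\dim\Delta_2 - \dim[\Delta_2,\Delta_2]$-type bounds; since $\Delta_2$ is integrable of rank 3 inside a manifold where $\Delta$ is non-holonomic, $\op{Char}(\Delta_2)$ has rank exactly 1 (rank 3 would make $\Delta$ itself involutive, contradicting non-holonomicity of $\Delta$ beyond step 2 — one must check $\op{Char}(\Delta_2) \cap \Delta$ is nonzero, which follows because $\Delta_2/\Delta$ is a line and the characteristic directions cannot be transverse to $\Delta$ once one traces through the bracket relations), and that rank-1 characteristic sits inside $\Delta$.

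Finally, for the de-prolongation statement, I would pass to the (local) leaf space $\bar M = M/\op{Char}(\Delta_2)$ of the rank-1 foliation by $\xi$: since $\xi$ is Cauchy characteristic for $\Delta_2$, the distribution $\Delta_2$ descends to a rank 2 distribution $\bar\Delta$ on $\bar M$, and $\Delta$ itself, being rank 2 and containing $\xi$ with $\Delta/\langle\xi\rangle$ one-dimensional, is recovered as the prolongation $\bar\Delta^{(1)}$ (or the $S^1$-prolongation) of $\bar\Delta$ — the extra direction $\p_t$ corresponds to $\xi$ and the twisted generator $\cos t \cdot U + \sin t \cdot V$ to the projection of $\Delta$. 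Conversely, any prolongation $\Delta = \bar\Delta^{(1)}$ manifestly has $\p_t \in \Gamma(\Delta)$ as a Cauchy characteristic of $[\Delta,\Delta]$, since $[\p_t, \Gamma(\Delta)]$ stays inside the span of $U,V,\p_t$ which is exactly $\Delta_2$. The main obstacle I anticipate is the bookkeeping in the converse direction of the first equivalence: showing that the growth condition $(2,1,1,\dots)$ actually produces a characteristic vector field \emph{inside} $\Delta$ (rather than merely somewhere in $\Delta_2$) requires carefully using the Jacobi identity to rule out the characteristic direction being transverse to $\Delta$, and verifying this persists as a smooth section under the regularity hypothesis.
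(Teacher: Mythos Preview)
Your proposal contains a fundamental misreading of the growth vector that derails the argument. In the paper's convention (and the standard one), growth $(2,1,1,\dots)$ means $\dim g_{-3}=\dim(\Delta_3/\Delta_2)=1$, so $\dim\Delta_3=4$, \emph{not} $3$. Thus $\Delta_2$ is \emph{not} Frobenius-integrable in the $(2,1,1,\dots)$ case; your claim ``$[\Delta_2,\Delta_2]=\Delta_3=\Delta_2$, so $\Delta_2$ is integrable'' is false, and everything you build on it (the argument via $\op{Char}(\Delta_2)$ having full rank, the Jacobi-identity step forcing rank $0$) collapses. The correct, and genuinely easy, version of the equivalence is: with $\Delta=\langle U,V\rangle$ and $W=[U,V]$, the quotient $\Delta_3/\Delta_2$ is spanned by $[U,W]$ and $[V,W]$; growth $(2,1,1,\dots)$ says exactly that these are linearly \emph{dependent} mod $\Delta_2$, so some $\xi=aU+bV\in\Gamma(\Delta)$ satisfies $[\xi,W]\in\Gamma(\Delta_2)$, and since $[\xi,\Delta]\subset\Delta_2$ automatically, $\xi$ is a Cauchy characteristic of $\Delta_2$. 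The converse is equally short. This is why the paper dismisses the equivalence as ``obvious''.

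The substantive content of the theorem is the de-prolongation statement, and here your sketch is too thin. Passing to the leaf space $\bar M=M/\xi$ does give a rank~$2$ distribution $\bar\Delta=\Delta_2/\xi$, but you must still prove that $\Delta$ is actually the (affine) prolongation $\bar\Delta^{(1)}$, i.e.\ that one can choose a frame $U,V$ of $\bar\Delta$ \emph{independent of the fibre coordinate} $t$ such that $\Delta=\langle U+tV,\partial_t\rangle$. This is not automatic: a priori the second generator $X$ of $\Delta$ depends on $t$ in an uncontrolled way. The paper handles this by straightening $\xi=\partial_t$, writing $\Delta_2=\langle X,\dot X,\partial_t\rangle$, using the growth condition to get a linear relation $\ddot X=\alpha X+\beta\dot X$, and then solving the resulting linear ODE system in $t$ to produce a $t$-independent frame $U,V$ of $\langle X,\dot X\rangle$; a final reparametrisation of $t$ gives $X=U+tV$. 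Your proposal skips this ODE argument entirely, and without it the identification $\Delta=\bar\Delta^{(1)}$ is unjustified.
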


Recall that the Cauchy characteristic $\xi$ is a symmetry of the distribution $\Delta$ and its derived flag.
So de-prolongation in this case is simply the quotient of the first derived distribution by the Cauchy characteristic
 $$
(\bar M,\bar\Delta)=(M,\Delta_2)/\x.
 $$

The above theorem is due to E.\,Cartan \cite{C2}. We present a new proof here for the convenience
of the reader.

\begin{proof}
The claim about the existence of a Cauchy characteristic is obvious.
We shall only demonstrate that $\bar\Delta^{(1)}=\Delta$.

For this, let us write $M$ locally as the product $\R^{n-1}_x\times\R^1_t$
in such a way that $\x=\p_t$. We take the other generator $X$ of $\Delta$ from $T\R^{n-1}_x$
with coefficient dependent on $(x,t)$. The derived flag equals
 $$
\Delta_1=\Delta=\langle X,\p_t\rangle,\ \Delta_2=\langle X,\dot X,\p_t\rangle,\
\Delta_3=\langle X,\dot X,\ddot X,[X,\dot X],\p_t\rangle,
 $$
where dot denotes the differentiation by $t$ (or equivalently, the Lie derivative along $\p_t$).
Since the growth vector is $(2,1,1,\dots)$ and $\p_t$ is the Cauchy characteristic, we must have
 $$
\ddot X=\a\,X+\b\,\dot X
 $$
for some $t$-dependent functions $\a,\b\in C^\infty(\bar M\times\R)$. Let $W=u\,X+v\,\dot X$
with $u,v\in C^\infty(\bar M\times\R)$.
Then solution to the system
 $$
\dot W=(\dot u+\a\,v)X+(u+\dot v+\b\,v)\dot X=0
 $$
is equivalent (provided $\op{rank}(X,\dot X)=2$) to the ODE system
 $$
\frac{d}{dt}\begin{pmatrix}u \\ v\end{pmatrix}=\begin{pmatrix}0 & -\a \\ -1 & -\b\end{pmatrix}
\begin{pmatrix}u \\ v\end{pmatrix}.
 $$
By choosing a basis of fundamental solutions, we get two $t$-independent vector fields $U,V$ such
that $\langle U,V\rangle=\langle X,\dot X\rangle$. Since generators are defined
up to scaling, we can assume $X=U+b\,V$. Regularity implies $\dot b\ne0$. By reparametrizing $t$ we can
assume that $b=t$. Thus we have:
 $$
\Delta=\langle U+tV,\p_t\rangle,\ \Delta_2=\langle U,V,\p_t\rangle,\
\bar\Delta=\langle U,V\rangle,
 $$
so that the claim follows from the definition of the prolongation.
\end{proof}

De-prolongation does not just result in the removal of 1 from the growth vector at the first component
and shift. It can significantly change the weak derived flag.
 %Here is an example.

{\bf Example.}
Consider the Hilbert-Cartan equation $\E_{1,2}$.
It has growth vector $(2,1,2)$ and the Tanaka algebra is the exceptional Lie algebra $\mathfrak{g}_2$
with grading (the number in the sequence below means the dimension while the subscript - the grading):
 $$
(2_{(-3)},1_{(-2)},2_{(-1)},4_{(0)},2_{(1)},1_{(2)},2_{(3)}).
 $$
The prolongation of $\E_{1,2}$ is a 6-dimensional manifold with rank 2 distribution of
growth $(2,1,1,1,1)$. Here we see that de-prolongation is not just removal of 1 and shift.

The Tanaka algebra of this 6-dimensional model is still $\mathfrak{g}_2$, but now with the grading
 $$
(1_{(-5)},1_{(-4)},1_{(-3)},1_{(-2)},2_{(-1)},2_{(0)},2_{(1)},1_{(2)},1_{(3)},1_{(4)},1_{(5)}).
 $$
The next prolongation has growth vector $(2,1,1,1,1,1)$, and at this point the Tanaka prolongation
is the contact (infinite-dimensional) algebra. This means that the 2nd (Spencer) prolongation of $\E_{1,2}$
is not Tanaka-flat.

\medskip

Since the algebras $\sym(\bar\Delta)$ and $\sym(\bar\Delta^{(1)})$ are isomorphic,
Theorems \ref{ThDeProl} and \ref{thm1} imply the following important statement
(it can be also deduced in a non-trivial way from Theorem 7.1 of \cite{MT}).

 \begin{cor}
A regular germ of a rank 2 distribution $\Delta$ has infinite-dimensional symmetry algebra only in one of the following cases.
 \begin{enumerate}
\item $\Delta$ completely non-holonomic: then it is equivalent to the
  Cartan distribution $\mathcal{C}_k$ on jet-space $J^k(\R,\R)$, with $k=\dim M-2$.
\item $\Delta_i\ne TM$ for any $i$: then there exists a codimension 1 foliation $\mathcal{F}$ of $M$
  with $T\mathcal{F}\supset\Delta$ such that all leaves $(\mathcal{F},\Delta)$ are equivalent.
 \end{enumerate}
 \end{cor}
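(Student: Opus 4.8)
The plan is to treat the two regimes of the statement separately and, in both, to reduce — by repeated de-prolongation — to a low-dimensional model pinned down by Theorems~\ref{thm1} and \ref{ThDeProl} together with the isomorphism $\sym(\bar\Delta)\cong\sym(\bar\Delta^{(1)})$ recorded above.

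Assume first that $\Delta$ is completely non-holonomic. Its growth vector is $(2,1,\dots)$, and, as noted before Theorem~\ref{thm1}, the first entry exceeding $1$ can only be $2$; if such a $2$ occurs then Theorem~\ref{thm1} gives $\dim\sym(\Delta)<\infty$. Hence, under the hypothesis, the growth vector is $(2,1,\dots,1)$, so by Theorem~\ref{ThDeProl} the distribution is de-prolongable, $\Delta=\bar\Delta^{(1)}$, and $\sym(\bar\Delta)\cong\sym(\Delta)$ is again infinite-dimensional. I would now induct on $\dim M$: applying the same dichotomy to $\bar\Delta$ keeps the growth vector of the form $(2,1,\dots,1)$, one entry shorter, so the descent continues, decreasing $\dim M$ by one at each step, and terminates only at a rank~$2$ non-holonomic distribution on a $3$-manifold — a contact structure, which by the Darboux theorem is locally $\mathcal{C}_1$ on $J^1(\R,\R)$ and carries the infinite-dimensional algebra of contact vector fields (below dimension~$3$ one leaves the class of non-holonomic rank~$2$ distributions). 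Running the canonical chain of prolongations $\bar\Delta\mapsto\bar\Delta^{(1)}$ back up, each step sending $\mathcal{C}_j$ to $\mathcal{C}_{j+1}$, identifies $\Delta$ with $\mathcal{C}_k$, $k=\dim M-2$.

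Assume now that $\Delta_i\ne TM$ for all $i$. Then the weak derived flag stabilizes and $\Delta_\infty:=\bigcup_i\Delta_i$ is a proper subbundle of $TM$; since $[\Gamma(\Delta_i),\Gamma(\Delta_j)]\subseteq\Gamma(\Delta_{i+j})$, the distribution $\Delta_\infty$ is integrable, and I take $\mathcal{G}$ to be its foliation, $T\mathcal{G}=\Delta_\infty$. As $\Delta_\infty$ is built canonically from $\Delta$, every symmetry of $\Delta$ preserves $\mathcal{G}$, and on each leaf $L$ the induced distribution $\Delta|_L$ is a completely non-holonomic rank~$2$ distribution. Applying the first case leaf-wise shows each $\Delta|_L$ is a Cartan distribution $\mathcal{C}_j$, $j=\dim L-2$; in particular all leaves of $\mathcal{G}$ are mutually equivalent. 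Finally one passes to the codimension~$1$ foliation $\mathcal{F}\supseteq\Delta$ demanded in the statement: if $\Delta_\infty$ has corank~$1$ take $\mathcal{F}=\mathcal{G}$, and otherwise split off trivial transverse directions of $\Delta_\infty$ one at a time, checking that the induced pair on the leaves is unchanged.

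The main obstacle is the leaf-wise step just invoked. De-prolongation only controls $\sym(\Delta|_L)$ on a single leaf, whereas what is needed is that an infinite-dimensional $\sym(\Delta)$ on $M$ forces the induced structures $\Delta|_L$ on nearby leaves to be locally equivalent — equivalently, that $\Delta$ depends trivially, up to a local diffeomorphism, on the $\Delta_\infty$-foliation. This is exactly the content carried, non-trivially, by Theorem~7.1 of \cite{MT}; a self-contained argument would go through the Lie equation for $\sym(\Delta)$ and its characteristic variety, in the spirit of the footnote to Theorem~\ref{thm1}. A secondary care-point, illustrated by the Example preceding the statement, is that de-prolongation can badly reshuffle the weak derived flag — a growth vector $(2,1,\dots,1)$ need not de-prolong to one of the same shape — so in the first case it is genuinely the infinite-symmetry hypothesis, invoked through Theorem~\ref{thm1} at every stage, that keeps the descending chain within this class.
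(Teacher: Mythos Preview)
Your approach coincides with the paper's own, which in fact gives no detailed argument at all: the paper simply states that the Corollary follows from Theorems~\ref{thm1} and~\ref{ThDeProl} together with the isomorphism $\sym(\bar\Delta)\cong\sym(\bar\Delta^{(1)})$, adding parenthetically that it ``can be also deduced in a non-trivial way from Theorem~7.1 of \cite{MT}.'' Your Case~1 argument is a correct and careful unpacking of exactly this sketch, including the important observation (flagged in your final paragraph and illustrated by the paper's Example) that the infinite-symmetry hypothesis must be re-invoked at every de-prolongation step because de-prolongation can reshuffle the growth vector.

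For Case~2 your honest identification of the gap --- that infinite $\sym(\Delta)$ on $M$ does not directly yield infinite $\sym(\Delta|_L)$ leaf-wise, so one cannot simply feed each leaf back into Case~1 --- is well taken, and your deferral to \cite{MT} matches the paper's parenthetical. One small simplification: the Corollary's Case~2 only asserts that the leaves $(\mathcal{F},\Delta)$ are mutually equivalent, not that each is a $\mathcal{C}_j$; so the target is leaf-independence of the induced structure (product form up to diffeomorphism), for which \cite{MT} is the natural reference, rather than a leaf-wise classification. Your reduction from the $\Delta_\infty$-foliation $\mathcal{G}$ to a codimension-$1$ foliation $\mathcal{F}$ by splitting off transverse directions is the right idea but would benefit from a line making the product structure explicit once leaf-equivalence is established.
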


Recall that distributions $\Delta$ with strong growth vector $(2,1,1,\dots,1)$ are called
Goursat distributions \cite{G}. Their regular points can be characterized by the condition that
the growth vector is $(2,1,1,\dots,1)$ in both the weak and the strong sense.
Near such points the normal form
 $$
\mathcal{C}_k=\op{Ann}\{dy_i-y_{i+1}\,dx|0\le i<k\}\subset TJ^k(\R^1,\R^1)
 $$
is provided by the von Weber - Cartan theorem \cite{W,C2}, see also \cite{GKR}.

\smallskip

This concludes our discussion of distributions with growth vector $(2,1,1,\dots)$.
From this point forward we assume that the distribution is not de-prolongable,
i.e. its growth is $(2,1,2,\dots)$.

\section{Most symmetric rank 2 distributions}

In this section we study the Tanaka algebra of a single Monge ODE and prove our main result.

\subsection{Calculation of the Carnot algebra.}\label{S31}
Let us consider equation (\ref{E1}) with fixed $m\le n$ as a submanifold in the space of jets
 $$
\E\subset J^{m,n}(\R,\R^2)\simeq\R^{n+m+3}(x,y,\dots,y_m,z,z_1,\dots,z_n),
 $$
given as a hypersurface $y_m=F(x,y,\dots,y_{m-1},z,z_1,\dots,z_n)$.
Internally $\E\simeq\R^{n+m+2}(x,y,\dots,y_{m-1},z,z_1,\dots,z_n)$ is
canonically equipped with the rank 2 distribution
 $$
\C=\langle\D_x=\p_x+y_1\p_y+\dots+F\p_{y_{m-1}}+z_1\p_z+\dots+z_n\p_{z_{n-1}},\p_{z_n}\rangle.
 $$
Its (weak) derived flag has growth vector $(2,1,2,\dots)$ if and only if
 $$
F_{z_nz_n}\ne0.
 $$
In what follows we will assume this \emph{non-degeneracy condition} which,
according to Theorem \ref{thm1}, implies $\dim\sym(\C)<\infty$.

In this case the derived distribution is $\C_2=\C+\langle\p_{z_{n-1}}+F_{z_n}\p_{y_{m-1}}\rangle$
and the second derived is
 $$
\C_3=\langle\D_x,\p_{y_{m-1}},\p_{z_n},\p_{z_{n-1}},\p_{z_{n-2}}+F_{z_n}\p_{y_{m-2}}\rangle.
 $$
The next (weak) derived\footnote{These formulas imply that the weak and strong flags of $\C$ coincide,
as was mentioned in Remark \ref{Rmk1}; this plays a role in Section~\ref{S4}.\label{ftnFLAG}}
are:
 \begin{multline*}
\C_{i+1}=\langle\D_x,\p_{y_{m-1}},\dots,\p_{y_{m-i+1}},\p_{z_n},\dots\p_{z_{n-i+1}},\p_{z_{n-i}}+F_{z_n}\p_{y_{m-i}}\rangle,\\
\C_{j+1}=\langle\D_x,\p_{y_{m-1}},\dots,\p_y,\p_{z_n},\dots\p_{z_{n-j}}\rangle,\qquad i\le m<j.
 \end{multline*}
Thus the dimension grows by two from grading $-3$ till grading $-m-2$ (resp. $-m-1$ if $n=m$) and then grows by one.
The growth vector is:
 $$
(2,1,\underbrace{2,\dots,2}_m,\underbrace{1,\dots,1}_{n-m-1})\quad\text{ for }m<n
 $$
and
 $$
(2,1,\underbrace{2,\dots,2}_{n-1},1)\quad\text{ for }m=n.
 $$

 \begin{prop}\label{L1}
Carnot algebra of equation (\ref{E1}) does not depend on a point $x\in\E$ and
is equal to the Lie algebra $\mathfrak{f}_{m,n}$ given by the following table of brackets
(only upper-diagonal non-zero entries are shown). For $m<n-1$:
 \begin{center}
 \begin{tabular}{c|ccccccccccccccc}
\!\!& $e_1$ & $e_1'$ & $e_2$ & $e_3$ & $e_3'$ & $e_4$ & $e_4'$ & \!\!\dots\!\!
 & $e_{m+2}$ & $e_{m+2}'$ & $e_{m+3}$ & \!\!$e_{m+4}$ & \!\!\dots\!\! & $e_{n+1}$\!\!\!\\
 \hline
\!\!$e_1$ & & $e_2$ & $e_3$ & $e_4$ & $e_4'$ & $e_5$ & $e_5'$ & \!\!\dots\!\!
 & $e_{m+3}$ & & $e_{m+4}$ & \!\!\dots\!\! & \!\!\!$e_{n+1}$\!\!\! \\
\!\!$e_1'$ & & & $e_3'$ & $e_4'$ & & $e_5'$ &  & \!\!\dots\!\! \\
\!\!$e_2$ \\
\!\!$\vdots$
 \end{tabular}
 \end{center}
The GNLA structure is given by the rule: $\deg e_i=-i=\deg e_i'$.

For $m=n$ the structure equations change to:
 \begin{center}
 \begin{tabular}{c|cccccccccccccc}
\!\!& $e_1$ & $e_1'$ & $e_2$ & $e_3$ & $e_3'$ & $e_4$ & $e_4'$ & \!\!\dots\!\!
  & $e_m$ & $e_m'$& $e_{m+1}$ & $e_{m+1}'$ & $e_{m+2}'$\\
 \hline
\!\!$e_1$ & & $e_2$ & $e_3$ & $e_4$ & $e_4'$ & $e_5$ & $e_5'$ & \!\!\dots\!\!
 & $e_{m+1}$ & $e_{m+1}'$ & & $e_{m+2}'$\\
\!\!$e_1'$ & & & $e_3'$ & $e_4'$ & & $e_5'$ & & \!\!\dots\!\! & $e_{m+1}'$ & & $e_{m+2}'$\\
\!\!$e_2$ \\
\!\!$\vdots$
 \end{tabular}
 \end{center}
And for $n=m+1$ the table is:
 \begin{center}
 \begin{tabular}{c|ccccccccccccc}
\!\!& $e_1$ & $e_1'$ & $e_2$ & $e_3$ & $e_3'$ & $e_4$ & $e_4'$ & \!\!\dots\!\!
 & $e_{m+1}$ & $e_{m+1}'$ & $e_{m+2}$ & $e_{m+2}'$\\
 \hline
\!\!$e_1$ & & $e_2$ & $e_3$ & $e_4$ & $e_4'$ & $e_5$ & $e_5'$ & \!\!\dots\!\!
 & $e_{m+2}$ & $e_{m+2}'$\\
\!\!$e_1'$ & & & $e_3'$ & $e_4'$ & & $e_5'$ & & \!\!\dots\!\! & $e_{m+2}'$\\
\!\!$e_2$ \\
\!\!$\vdots$
 \end{tabular}
 \end{center}

  \end{prop}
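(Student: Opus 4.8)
The plan is to compute the Carnot algebra directly from the explicit generators of the distribution $\C$, using the weak derived flag that has already been written down above. First I would fix the basis: set $e_1 = \p_{z_n}$ and $e_1' = \D_x$ as the two generators of $g_{-1} = \C$, and then read off representatives of the higher graded pieces from the formulas for $\C_2, \C_3, \dots$ computed earlier. Concretely, $e_2 = [e_1,e_1']$ should be (the class of) $\p_{z_{n-1}} + F_{z_n}\p_{y_{m-1}}$, which spans $g_{-2} = \C_2/\C_1$; then $e_3 = [e_1, e_2]$ and $e_3' = [e_1', e_2]$ should span $g_{-3} = \C_3/\C_2$, matching the rank-3 distribution $\C_3$ above; and so on up the flag. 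The point-independence of the Carnot algebra is exactly the assertion that $F_{z_nz_n} \ne 0$ suffices to make all the structure constants constant after a suitable normalization — this is the content of the strong regularity claimed in Remark~\ref{Rmk1}.

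The key computational step is to iterate the brackets $[e_1, -] = [\p_{z_n}, -]$ and $[e_1', -] = [\D_x, -]$ on the successive representatives and show that, modulo lower-order terms in the filtration, they produce exactly the entries in the three tables. The crucial observation making the tables work is that $[\p_{z_n}, \D_x] = \p_{z_{n-1}} + F_{z_n}\p_{y_{m-1}}$ and more generally $[\p_{z_n}, (\text{stuff}) + F_{z_n}\p_{y_{m-i}}] $ contributes a $\p_{y_{m-i}}$-type term with coefficient $F_{z_nz_n}$, which is why we need it nonzero to have the $e_1'$-bracket produce the primed generators $e_{i+1}'$. Taking $F = (z_n)^2$ (or dividing by $F_{z_nz_n}$ once and for all) normalizes the relevant coefficient to a constant, which both fixes the bracket table and proves point-independence. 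The distinction between the three cases $m < n-1$, $m=n$, $n=m+1$ is bookkeeping about where the flag stops growing by two and starts growing by one: for $m < n$ there are exactly $m$ steps of growth by $2$ (the primed tower $e_3', e_4', \dots, e_{m+2}'$ appears), whereas the top generator of the primed tower depends delicately on whether $n = m$, $n = m+1$, or $n > m+1$, which is why the terminal columns of the three tables differ.

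The main obstacle I expect is the careful filtration bookkeeping near the top of the primed tower: one must verify that $[e_1', e_k] $ for $k$ near $m+2$ lands in the correct graded piece and that the would-be generator $e_{m+3}'$ etc. actually vanishes (i.e. the primed tower terminates at the right spot), which is precisely the transition from growth $2$ to growth $1$ in the growth vector. This requires tracking the $\p_y$-components of the iterated brackets against the $\p_z$-components and checking that, once the $y$-tower is exhausted at $\p_y$ (grading $-m-1$ or so), no further primed generators survive. A secondary subtlety is checking that all the brackets not listed in the upper-diagonal tables (in particular $[e_i, e_j]$ with $i,j \ge 2$) vanish modulo the filtration — but since all the $e_i$ with $i \ge 2$ are, up to lower terms, coordinate vector fields $\p_{y_s}, \p_{z_t}$ which commute, and the $F$-correction terms are killed by the grading count, this is immediate once the leading representatives are identified. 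So the real work is entirely in steps 1--2 above; the rest is a consequence of the grading.
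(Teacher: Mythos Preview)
Your approach is the same as the paper's---compute the brackets from explicit representatives of the graded pieces---but you have the labels $e_1$ and $e_1'$ swapped, and this matters because the tables in the statement are not symmetric in them. In the tables the unprimed tower $e_2,e_3,\dots,e_{n+1}$ is the long one (length governed by $n$) while the primed tower $e_3',\dots,e_{m+2}'$ is the short one (length governed by $m$); so $e_1$ must be the generator that shifts down the $z$-tower, i.e.\ $e_1=-\D_x$, and $e_1'=\p_{z_n}$ is the vertical vector whose bracket with $F_{z_n}\p_{y_{m-i}}$ produces the primed elements via $F_{z_nz_n}$. With your assignment $e_1=\p_{z_n}$ you would get $[e_1,e_3]=[\p_{z_n},F_{z_nz_n}\p_{y_{m-1}}]=F_{z_nz_nz_n}\p_{y_{m-1}}\in\C_3$, hence zero in $g_{-4}$, contradicting the table entry $[e_1,e_3]=e_4$. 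Your own sentence ``to have the $e_1'$-bracket produce the primed generators'' already signals the inconsistency: it is $\p_{z_n}$ that produces primed generators, and you called that $e_1$.

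Once you swap the labels (and include the sign, $e_1=-\D_x$, so that $[e_1,e_1']=+e_2$), everything else in your plan is correct and coincides with the paper's proof. The paper simply writes down closed formulas for all the $e_i,e_i'$ at once---in particular $e_i'=F_{z_nz_n}\p_{y_{m-i+2}}$, which is exactly your ``divide by $F_{z_nz_n}$ once and for all'' made precise as absorbing $F_{z_nz_n}$ into the basis---and then the structure equations and point-independence are immediate.
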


In other words, $\mathfrak{f}_{m,n}$ is obtained through $m$ successive 1D
central extensions (this has sense to be called one central extension of order $m$)
of the Heisenberg algebra $\mathcal{H}_{n+2}=\langle e_1,e_1',e_2,e_3,\dots,e_{n+1}\rangle$.

 \begin{proof}
The generators are given by the formulae (we suppose $F_{z_nz_n}\ne0$;
we use the same symbol for the vector and its image under quotient to GNLA)
 \begin{multline*}
e_1=-\D_x,\ \ e_1'=\p_{z_n},\qquad e_i'=F_{z_nz_n}\p_{y_{m-i+2}}\quad (2<i\le m+2),\\
e_j=\p_{z_{n-j+1}}+F_{z_n}\p_{y_{m-j+1}}+c_j\p_{y_{m-j+2}}\quad (2\le j\le m+2),\\
e_k=\p_{z_{n-k+1}}\quad (m+2<k\le n+1)
 \end{multline*}
with coefficients $c_2=0$, $c_j=F_{z_{n-1}}+F_{z_n}F_{y_{m-1}}-(j-2)\D_x(F_{z_n})$ ($3\le j\le m+2$)
playing no role in what follows. For $n=m+1$ the last line in the above formula is void, and
for $n=m$ we shall additionally modify the range of the middle line ($2\le j\le m+1$).
The structure equations follow.
 \end{proof}

\subsection{Calculation of the Tanaka algebra}\label{S32}

Now let us describe the Tanaka algebra of equation (\ref{E1}) with $F_{z_nz_n}\ne0$.

 \begin{prop}
Prolongation $\mathfrak{t}_{m,n}=\widehat{\mathfrak{f}_{m,n}}$ of the GNLA $\mathfrak{f}_{m,n}$
has the following graded structure
(the number means the dimension while the subscript - the grading):
 $$
(1_{(-m-2)},2_{(-m-1)}\dots,2_{(-4)},2_{(-3)},1_{(-2)},2_{(-1)},2_{(0)})\quad\text{ for }n=m;
 $$
then for $n>m>1$:
 $$
(1_{(-n-1)}\dots1_{(-m-3)},2_{(-m-2)}\dots2_{(-3)},1_{(-2)},2_{(-1)},3_{(0)},1_{(1)}\dots1_{(n-m-1)});
 $$
and for $n>2$, $m=1$ we get:
 $$
(1_{(-n-1)},\dots,1_{(-4)},2_{(-3)},1_{(-2)},2_{(-1)},3_{(0)},2_{(1)},1_{(2)},\dots,1_{(n-2)}).
 $$
 \end{prop}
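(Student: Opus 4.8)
The plan is to compute the Tanaka prolongation $\mathfrak{t}_{m,n} = \widehat{\mathfrak{f}_{m,n}}$ degree by degree, exploiting the very explicit description of the Carnot algebra $\mathfrak{f}_{m,n}$ from Proposition \ref{L1}. Recall that since $\mathfrak{m} = \mathfrak{f}_{m,n}$ is fundamental (generated by $g_{-1} = \langle e_1, e_1'\rangle$), every prolongation element $u \in g_k$ is determined by its restriction $u: g_{-1} \to g_{k-1}$, so at each step one solves a finite linear system: write down the most general grading-$k$ map $g_{-1} \to g_{k-1}$, extend it to all of $\mathfrak{m}$ by the Leibniz rule forced by the generators ($e_i = [e_1, e_{i-1}]$, $e_i' = [e_1', e_{i-1}]$ or $[e_1, e_{i-1}']$ depending on the regime), and impose the derivation condition $u([X,Y]) = [u(X),Y] + [X,u(Y)]$ on all remaining bracket relations of $\mathfrak{f}_{m,n}$. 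The subscripts in the stated answer record the dimensions of the solution spaces $g_k$, and the three cases ($n=m$; $n>m>1$; $n>2, m=1$) correspond to the three bracket tables given.

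First I would determine $g_0 = \op{Der}_0(\mathfrak{m})$: a grading-preserving derivation is given by a $2\times2$ matrix $A$ on $g_{-1}$ in the basis $(e_1, e_1')$, and compatibility with the central-extension structure (the iterated brackets with $e_1$ producing the $e_i, e_i'$, and especially the relations that terminate the flag at the top degree, e.g. $[e_1', e_{n+1}] = 0$ or $[e_1, e_{m+1}] = 0$ as the case may be) cuts the four parameters down to the claimed $\dim g_0 \in \{2,3\}$; the difference between $m=1$ (where $e_2 = [e_1,e_1']$ sits in degree $-2$ and the reductive part is larger) and $m>1$ is exactly where the "$3$" versus earlier terminations enters. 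Then I would push the prolongation into positive degrees: for $m=1$, $n>2$ the tower continues $2_{(1)}, 1_{(2)}, 1_{(3)}, \dots, 1_{(n-2)}$ and stops (the next degree would require a derivation that is forced to vanish on $e_1$, hence is zero); for $n>m>1$ it continues $1_{(1)}, \dots, 1_{(n-m-1)}$ and stops; for $n=m$ it stops already at degree $0$. Each "stops" claim is a vanishing statement: one checks that the linear system in degree $k+1$ has only the trivial solution, using that $\mathfrak{f}_{m,n}$ is strongly regular (Proposition \ref{L1}) so the computation is point-independent.

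For the negative part, nothing is to be proved — $g_{-1}, \dots, g_{-m-2}$ (resp. $g_{-n-1}$) are just the graded pieces of $\mathfrak{f}_{m,n}$ read off from the bracket tables, which account for the prefixes $1_{(-n-1)} \dots 1_{(-m-3)}, 2_{(-m-2)} \dots 2_{(-3)}, 1_{(-2)}, 2_{(-1)}$ (and the obvious modifications when $m=1$ or $m=n$). So the content is entirely in the non-negative part. A cleaner organizing principle, which I would use to cross-check rather than as the primary argument, is the cohomological reformulation mentioned in Section \ref{S1}: $g_k = H^1_k(\mathfrak{m}, \mathfrak{g}^{k-1})$, and the Heisenberg/central-extension structure of $\mathfrak{f}_{m,n}$ makes these Lie-algebra cohomology groups computable from those of the Heisenberg algebra $\mathcal{H}_{n+2}$ twisted by the extension cocycle; the known cohomology of Heisenberg algebras then gives the dimension counts directly. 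It is worth noting — and I would flag this in the proof — that in the first two regimes the top positive degree equals $n-m-1$, matching (as it must, by Theorem \ref{ThT} and the flatness of $\E_{m,n}$) the shift that makes $\mathfrak{t}_{m,n}$ symmetric-looking about degree $0$ only in the balanced $m=n$ case, where the positive part is genuinely absent beyond $g_0$ because the distribution is "as prolonged as possible."

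The main obstacle is the positive-degree vanishing step: showing that the prolongation actually terminates where claimed, rather than continuing. The subtlety is that a candidate $u \in g_{k+1}$ is pinned down by $u(e_1) \in g_k$ and $u(e_1') \in g_k$, but $g_k$ in the upper range is only one-dimensional, spanned by an element that brackets trivially with almost everything; one must show that the derivation constraints coming from the relations $e_i = [e_1, e_{i-1}]$ force $u(e_1) = u(e_1') = 0$. This is a careful bookkeeping argument through the long chain of generators, and the place where the three cases genuinely diverge — in particular the extra generator in $g_0$ when $m=1$ propagates to give the longer positive tail $2_{(1)}, 1_{(2)}, \dots$, so the termination argument must be run separately and most carefully there. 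I would present the $m=1$ case in full and indicate the (easier) modifications for $m>1$.
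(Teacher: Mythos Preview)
Your plan is essentially the paper's own: compute $g_0$ as the degree-$0$ derivations determined by a $2\times2$ matrix on $g_{-1}$, then inductively parametrize $e_{-p}\in g_p$ by $[e_{-p},e_1]$ and $[e_{-p},e_1']$ in $g_{p-1}$, propagate via Leibniz through the chain $e_i=[e_1,e_{i-1}]$, $e_i'=[e_1',e_{i-1}]$, and use the terminal relations of $\mathfrak{f}_{m,n}$ to force the remaining parameters to zero.

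One correction to your case analysis: the split $\dim g_0=2$ versus $\dim g_0=3$ is governed by $n=m$ versus $n>m$, \emph{not} by $m=1$ versus $m>1$. The paper's clean organizing observation is to identify which lines in $g_{-1}$ are canonical (automorphism-invariant): for $n=m$ \emph{both} $\mathbb{R}e_1$ and $\mathbb{R}e_1'$ are canonical (characterized by $[e_1,e_{m+1}]=0$ and $[e_1',e_3']=0$ respectively), so $g_0$ is diagonal, hence $2$-dimensional; for $n>m$ only $\mathbb{R}e_1'$ is canonical, so $g_0$ is the Borel (upper-triangular) subalgebra, hence $3$-dimensional. The $m=1$ versus $m>1$ distinction enters only at the \emph{next} step: when $m=1$ the relation $[e_1',e_3']=0$ disappears from the table (there is no $e_4'$), which loosens the constraints on $e_{-1}$ and produces $\dim g_1=2$ rather than $1$. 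So the paper actually treats the $m>1$ cases first and in detail, then handles $m=1$ as the variant --- the reverse of what you propose, and for the reason just given the $m>1$ computation is the cleaner template. Your Heisenberg-cohomology cross-check is not used in the paper and is not needed; the direct linear-algebra computation is short once the canonical-line observation pins down $g_0$.
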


Let us notice that for $m=1,n=2$ the prolongation is different. Indeed, in this case
$\widehat{\mathfrak{f}_{1,2}}$ is isomorphic to the Lie algebra $\mathfrak{g}_2$ with the grading
 $$
(2_{(-3)},1_{(-2)},2_{(-1)},4_{(0)},2_{(1)},1_{(2)},2_{(3)}).
 $$
In the case $m=n=1$ the prolongation is infinite, corresponding to the contact algebra
of symmetries of $J^1(\R,\R)$:
 $$
(1_{(-2)},2_{(-1)},4_{(0)},6_{(1)},9_{(2)},12_{(3)},16_{(4)},20_{(5)},25_{(6)},30_{(7)},36_{(8)}\dots).
 $$

To be more precise the GNLA structure of $\mathfrak{t}_{m,n}=\oplus g_i$ is given by
the structure equations of Proposition \ref{L1} coupled with the following.

 \begin{prop}\label{L2}
For $n=m$: $\mathfrak{t}_{m,m}=\mathfrak{f}_{m,m}\oplus g_0$, with the space $g_0$ Abelian and generated
by $e_0^{10},e_0^{01}$ given by
 \begin{multline*}
[e_0^{ac},e_1]=a\,e_1,\ [e_0^{ac},e_1']=c\, e_1',\\
[e_0^{ac},e_k]=((k-1)a+c)\,e_k,\ k\ge2,\\
[e_0^{ac},e_k']=((k-2)a+2c)\,e_k',\ k>2.
 \end{multline*}

For $n>m>1$ the space $g_0\subset\mathfrak{t}_{m,n}$ is a Borel subalgebra of
$\op{gl}(g_{-1})$ and is generated by $e_0^{100},e_0^{010},e_0^{001}$ given by
 \begin{multline*}
[e_0^{abc},e_1]=a\,e_1+b\,e_1',\ [e_0^{abc},e_1']=c\, e_1',\\
[e_0^{abc},e_k]=((k-1)a+c)\,e_k+(k-2)b\,e_k',\ k\ge2,\\
[e_0^{abc},e_k']=((k-2)a+2c)\,e_k',\ k>2.
 \end{multline*}
Further on, $\mathfrak{t}_{m,n}=\mathfrak{f}_{m,n}\oplus g_0\oplus\dots\oplus g_{n-m-1}$,
where $g_i$ for $i>0$ are one-dimensional and generated by $e_{-i}$ given by (only
non-trivial relations are shown, and we omit commutators of non-negative
gradations\footnote{which restore uniquely by the derivation property.}):
 \begin{multline*}
[e_{-1},e_1]=e_0^{010},\ [e_{-1},e_k]=\tfrac{(k-2)(k-3)}2\,e_{k-1}',\\
[e_{-2},e_1]=e_{-1},\ [e_{-2},e_k]=\tfrac{(k-2)(k-3)(k-4)}6\,e_{k-2}',
\quad\dots \\
\dots\quad
[e_{m+1-n},e_1]=e_{m+2-n},\ [e_{m+1-n},e_k]=\tbinom{k-2}{n-m}\,e_{k+m+1-n}'
 \end{multline*}
(in the above formulae for $[e_{-p},e_k]$ we restrict $p+3\le k\le m+p+2$).

When $n>2$, $m=1$, the above formulae hold true with the only difference that $g_1$ is two-dimensional
and is generated by $e_{-1}^{10},e_{-1}^{01}$, so that we change $e_{-1}$ to $e_{-1}^{10}$ in the above,
and add the only nontrivial relations
 \begin{multline*}
[e_{-1}^{01},e_1]=e_0^{-2,0,1}=e_0^{001}-2\,e_0^{100},\
[e_{-1}^{01},e_1']=(1-n^2)\,e_0^{010},\\
[e_{-1}^{01},e_k]=(n^2-k^2+2k-4)\,e_{k-1}
 \end{multline*}
for the second generator of $g_1$.
 \end{prop}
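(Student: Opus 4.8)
The plan is to compute the Tanaka prolongation $\mathfrak{t}_{m,n}=\widehat{\mathfrak{f}_{m,n}}$ degree by degree, starting from the explicit structure equations of $\mathfrak{f}_{m,n}$ from Proposition \ref{L1}, and to identify at each step the space $g_k=H^1_k(\mathfrak{m},\mathfrak{g}^{k-1})$ of degree-$k$ derivations. By the remark after the definition of Tanaka prolongation, a homogeneous degree-$k$ element $u$ is determined by its restriction $u\colon g_{-1}\to g_{k-1}$, so in practice I would parametrize $u$ by its action on the two generators $e_1,e_1'$ and then propagate to all of $\mathfrak{m}$ via the derivation rule $u([X,Y])=[u(X),Y]+[X,u(Y)]$, using the chain $e_2=[e_1,e_1']$, $e_i=[e_1,e_{i-1}]$, $e_i'=[e_1',e_{i-1}]$ (and the extra relations $e_{m+2}'=[e_1,e_{m+1}']$ in the $m=n$ case, etc.). Consistency, i.e. well-definedness of $u$, then imposes a finite linear system coming from the relations in $\mathfrak{f}_{m,n}$ that are \emph{not} of the form $[e_1,\cdot]$ or $[e_1',\cdot]$ — chiefly the vanishing relations such as $[e_1',e_i]=0$ for $i$ past a certain range, $[e_1,e_{n+1}]=0$, and the coincidences $[e_1,e_i']=[e_1',e_i]$. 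Solving these systems gives the claimed dimensions.

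Concretely I would proceed as follows. First I compute $g_0=H^1_0(\mathfrak{m},\mathfrak{m})$, the degree-preserving derivations: write $[u,e_1]=a\,e_1+b\,e_1'$ and $[u,e_1']=c\,e_1+d\,e_1'$, propagate, and extract the constraints. In all three cases the relation $[e_1',e_1']=0$ being preserved trivially, the obstruction comes from the relations killing $e_1'$-descendants; one finds $c=0$ always (so $g_0$ is contained in the Borel of $\mathfrak{gl}(g_{-1})$), $b=0$ when $m=n$ (giving $\dim g_0=2$), and $b$ free for $m<n$ (giving $\dim g_0=3$), exactly reproducing the stated generators $e_0^{ac}$ and $e_0^{abc}$ together with their eigenvalue formulas on $e_k,e_k'$ — these eigenvalues are forced, not chosen, by counting how many $e_1$- versus $e_1'$-brackets enter $e_k$ and $e_k'$. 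Next, for $m=n$ one shows $g_1=0$ (the prolongation terminates), because a degree-$1$ derivation would have to send $e_1\mapsto \lambda e_0^{ac}$-type images but the top relation obstructs it; this is where the table for $m=n$ closes at grading $0$. For $m<n$ the positive part is nonzero precisely because the Carnot algebra has the ``tail'' $e_{m+3},\dots,e_{n+1}$ with only $e_k=[e_1,e_{k-1}]$ relations and $[e_1',e_k]=0$: a degree-$1$ derivation $e_{-1}$ can be defined by $[e_{-1},e_1']=0$, $[e_{-1},e_1]=e_0^{010}$, and one checks the induced action on the tail is consistent, giving the binomial coefficients $\tbinom{k-2}{n-m}$ after iterating; repeating produces $g_i$ one-dimensional for $1\le i\le n-m-1$ and then $g_{n-m}=0$. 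Finally the $m=1$ case is treated separately: here the absence of the central extensions (no $e_i'$ for $3\le i\le m+2$ beyond $e_1'$ itself, since $m+2=3$) frees up an extra degree-$1$ derivation $e_{-1}^{01}$ with $[e_{-1}^{01},e_1]=e_0^{001}-2e_0^{100}$, and one verifies the listed relations and that no further new generators appear, so $\dim g_1=2$ and $\dim g_i=1$ for $2\le i\le n-2$.

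The main obstacle is the bookkeeping of the positive-degree prolongation in the range $n>m\ge 1$: one must show both that the stated generators $e_{-i}$ actually \emph{are} consistent derivations (verifying the derivation identity on every relation of $\mathfrak{f}_{m,n}$, including the ones hidden in the ``$\dots$'' of the tables, which is where the binomial-coefficient pattern $\tbinom{k-2}{n-m}$ has to fall out correctly from the Vandermonde-type recursion $[e_{-p},e_1]=e_{-(p-1)}$), and that there are \emph{no others} — i.e. that the linear system for a hypothetical $g_{n-m}$ is inconsistent, which is what pins down the exact length of the positive tail. A clean way to organize this is to exhibit, once and for all, an auxiliary ``model'' computation: recognize $\mathfrak{t}_{m,n}$ as built from a $\mathfrak{sl}_2$- or $\mathfrak{gl}_2$-module structure on the span of the $e_k,e_k'$ (the $e_k$ forming one string, the $e_k'$ a shorter string attached via the central extension), so that $g_0$ is the relevant subalgebra of $\mathfrak{gl}_2$ acting, $g_{<0}$ its module, and the positive part its further prolongation; the dimension count then reduces to a representation-theoretic statement about when such a prolongation terminates. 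I would also separately verify the two genuinely exceptional cases $(m,n)=(1,2)$ and $(1,1)$ by direct computation (or by citing the known identifications with $\mathfrak{g}_2$ and the contact algebra of $J^1(\R,\R)$), since there the $e_1'$-string and the $e_1$-string have equal or nearly equal length and the Borel $g_0$ jumps to all of $\mathfrak{gl}(g_{-1})$ plus more, breaking the pattern — this is why these cases are excluded from the general formulas and from Theorems \ref{Thm1} and \ref{Thm2}.
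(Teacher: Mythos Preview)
Your approach is essentially identical to the paper's: parametrize a degree-$k$ derivation by its values on $e_1,e_1'$, propagate via the derivation rule along the chains $e_i=[e_1,e_{i-1}]$ and $e_i'=[e_1',e_{i-1}]$, and extract linear constraints from the remaining relations (the coincidences $[e_1,e_{i}']=[e_1',e_i]$ and the terminal vanishing relations). The paper carries this out in exactly this way, with the key constraint at each positive level coming from computing $[e_{-p},e_5']$ two ways via $e_5'=[e_1,e_4']=[e_1',e_4]$, together with the top-degree relations; your identification of these as the critical checks is correct.

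Two minor remarks. First, your suggested reorganization via an $\mathfrak{sl}_2$- or $\mathfrak{gl}_2$-module structure on $\langle e_k,e_k'\rangle$ is not in the paper; the paper stays with the direct recursive computation throughout. Second, your phrasing for the $m=1$ case (``no $e_i'$ for $3\le i\le m+2$ beyond $e_1'$'') is garbled --- what you mean, and what actually drives the extra freedom, is that for $m=1$ the prime series is just $e_3'$ alone, so the coincidence relations $[e_1,e_i']=[e_1',e_i]$ impose only one constraint instead of $m$, leaving the additional parameter that becomes $e_{-1}^{01}$.
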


 \begin{proof}
Consider at first the case $n=m$. Then $g_{-1}$ canonically splits as $\R e_1\oplus\R e_1'$, where
$e_1$ is given (up to scaling) by the relation $[e_1,e_{m+1}]=0$ (with $e_{m+1}=\op{ad}_{e_1}^{m-1}(e_2)$)
and $e_1'$ by the relation $[e_1',e_3']=0$ (with $e_3'=\op{ad}_{e_1'}(e_2)$).
Thus $g_0$ is diagonal, and in fact it equals the space $\{e_0^{ac}\}$ from the display formula of the Proposition.

To see that $g_1=0$, consider its element $e_{-1}$. It is uniquely given by 4 parameters:
 $$
[e_{-1},e_1]=e_0^{ac},\ [e_{-1},e_1']=e_0^{a'c'}.
 $$
This implies the further relations:
 \begin{multline*}
[e_{-1},e_2]=c\,e_1'-a'\,e_1,\
[e_{-1},e_3]=(a+2c)\,e_2,\
[e_{-1},e_3']=(2a'+c')\,e_2,\\
[e_{-1},e_4]=(3a+3c)\,e_3,\
[e_{-1},e_4']=(2a'+c')\,e_3+(a+2c)\,e_3',\\
[e_{-1},e_5]=(6a+4c)\,e_4,\ \dots
 \end{multline*}
Relation $0=[e_{-1},[e_1',e_3']]=(3a'+3c')\,e_3'$ yields $c'=-a'$.

There are two ways to evaluate $[e_{-1},e_5']$ since $e_5'=[e_1,e_4']=[e_1',e_4]$ and they do not match,
implying $a'=c'=0$, $c=0$. We continue calculation and get
 $$
[e_{-1},e_k]=\tfrac{(k-1)(k-2)}2a\,e_{k-1},\ [e_{-1},e_k']=\tfrac{(k-2)(k-3)}2a\,e_{k-1}'.
 $$
For $k=m+2$ (writing $[e_{-1},e_{m+1}]=0$ instead of $e_{m+2}$) we get $a=0$.

Let now $n>m>1$. Then only the line $\R e_1'\subset g_{-1}$ is canonical, and it's easy to check that $g_0$
is upper triangular in the basis $e_1,e_1'$, with general element $e_0^{abc}$ as in the display formula
in the Proposition.

Again to calculate $g_1$ we describe its element $e_{-1}$ via the formulae
 $$
[e_{-1},e_1]=e_0^{abc},\ [e_{-1},e_1']=e_0^{a'b'c'},
 $$
which imply
 \begin{multline*}
[e_{-1},e_2]=(c-b')\,e_1'-a'\,e_1,\\
[e_{-1},e_3]=(a+2c-b')\,e_2,\
[e_{-1},e_3']=(2a'+c')\,e_2,\\
[e_{-1},e_4]=(3a+3c-b')\,e_3+b\,e_3',\
[e_{-1},e_4']=(2a'+c')\,e_3+(a+2c)\,e_3',\\
[e_{-1},e_5]=(6a+4c-b')\,e_4+3b\,e_4',\ \dots
 \end{multline*}
Next $0=[e_{-1},[e_1',e_3']]=(3a'+3c')\,e_3'$ yields $c'=-a'$.
Evaluation of $[e_{-1},e_5']$ in two ways using $e_5'=[e_1,e_4']=[e_1',e_4]$ gives two different results,
whence $a'=c'=0$, $b'=c$.

The further calculations go without contradictions with the result
 \begin{multline*}
[e_{-1},e_k]=(\tfrac{(k-1)(k-2)}2a+(k-2)c)\,e_{k-1}+\tfrac{(k-2)(k-3)}2b\,e_{k-1}',\\
[e_{-1},e_k']=(\tfrac{(k-2)(k-3)}2a+2(k-3)c)\,e_{k-1}'.
 \end{multline*}
Now we need to specify and consider first $n=m+1$.
Substituting the last expression into $[e_{-1},[e_1,e_{m+2}]]=0$ and
$[e_{-1},[e_1,e_{m+2}']]=0$ we get 3 independent constraints implying $a=b=c=0$.
Thus in this case $g_1=0$.

Let now $n>m+1$. Then the relations are $[e_{-1},[e_1,e_{n+1}]]=0$ and
$[e_{-1},[e_1,e_{m+2}']]=0$ implying only $a=c=0$, thus making $e_{-1}$ with $b=1$ the
generator of $g_1$.

Next we consider $g_2$ with element $e_{-2}$ given by
 $$
[e_{-2},e_1]=\a\,e_{-1},\ [e_{-2},e_1']=\a'\,e_{-1}.
 $$
This yields
 \begin{multline*}
[e_{-2},e_2]=-\a'\,e_0^{010},\
[e_{-2},e_3]=\a'\,e_1',\
[e_{-2},e_3']=0,\\
[e_{-2},e_4]=\a'\,e_2,\
[e_{-2},e_4']=0,\
[e_{-2},e_5]=\a'\,e_3+\a\,e_3',\ \dots
 \end{multline*}
Evaluation of $[e_{-2},e_5']$ via $e_5'=[e_1,e_4']=[e_1',e_4]$ gives $2\a'\,e_3'$ and 0, whence $\a'=0$.
If $n>m+2$ there are no more contradictions and we get
 $$
[e_{-2},e_k]=\tfrac{(k-2)(k-3)(k-4)}6\,\a\,e_{k-2}',\  [e_{-2},e_k']=0,
 $$
i.e. $g_2$ is one-dimensional.

For $n=m+2$ the last relation in $\mathfrak{f}_{m,n}$ forces $\a=0$ and thus $g_2=0$ stops
the prolongation. Continuing in the same way we uncover the result for $n>m>1$.

The case $n>2$, $m=1$ is considered in the same manner giving us the required prolongation
$\widehat{\mathfrak{f}_{1,n}}$. Indeed, the $g_0=\{e_0^{abc}\}$  is the same
with relations ($\d_i^j$ is the Kronecker delta)
 \begin{multline*}
[e_0^{abc},e_1]=a\,e_1+b\,e_1',\ [e_0^{abc},e_1']=c\,e_1',\\
[e_0^{abc},e_k]=((k-1)a+c)\,e_k+\d_k^3\,b\,e_3'\ (k\ge2),\ [e_0^{abc},e_3']=(a+2c)\,e_3'.
 \end{multline*}
Next is $g_1$ the elements of which are specified by the relations
 $$
[e_{-1},e_1]=e_0^{abc},\ [e_{-1},e_1']=e_0^{a'b'c'}.
 $$
Similar to the above we obtain $a'=c'=0$, $a=-2c$, $b'=(1-n^2)c$.
Thus the general element is $e_{-1}^{bc}$ given by the only nontrivial relations:
 \begin{multline*}
[e_{-1}^{bc},e_1]=e_0^{-2c,b,c},\ [e_{-1}^{bc},e_1']=e_0^{0,(1-n^2)c,0},\\
[e_{-1}^{bc},e_k]=((k-1)(3-k)+n^2-1)c\,e_{k-1}+\d_k^4\,b\,e_3'.
 \end{multline*}
Next comes $g_2$ and the same arguments show that it is one-dimensional. Continuing we get
$g_p=\langle e_{-p}\rangle$ for $2\le p\le n-2$ with the only non-trivial relations
(between elements of positive and negative gradings):
 $$
[e_{-p},e_1]=e_{1-p},\ [e_{-p},e_{p+3}]=e_3'
 $$
(and we mean $e_{-1}^{10}$ instead of $e_{-1}$ in the very first formula for $p=2$).

Prolongation of $\mathfrak{f}_{1,1}$ is rather standard, and the case of $\mathfrak{f}_{1,2}$
is due to Tanaka \cite{T} (see also \cite{Y}).
 \end{proof}

 \begin{cor}\label{Cr2}
$\dim\mathfrak{t}_{m,n}=2n+4$ for $n\ge m>1$ and it equals $2n+5$ for $n>2,m=1$.
In exceptional cases $\dim\mathfrak{t}_{1,2}=14$ and $\dim\mathfrak{t}_{1,1}=\infty$.
 \end{cor}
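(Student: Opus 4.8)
The plan is to read off $\dim\mathfrak{t}_{m,n}$ directly from the graded structure already computed in the preceding Proposition, summing the dimensions listed in each grading. For $n\ge m>1$ the negative part of $\mathfrak{t}_{m,n}$ is the Carnot algebra $\mathfrak{f}_{m,n}$, whose dimension we count from Proposition \ref{L1}: two generators $e_1,e_1'$ in grading $-1$, one element $e_2$ in grading $-2$, two elements $e_i,e_i'$ in each grading from $-3$ to $-m-2$, and one element $e_k$ in each grading from $-m-3$ down to $-n-1$. This gives $\dim\mathfrak{f}_{m,n}=2+1+2m+(n-m-2)=n+m+1$, consistent with $\dim\E-1$. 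Adding the non-negative part from Proposition \ref{L2}: $\dim g_0=3$ (the Borel subalgebra generated by $e_0^{100},e_0^{010},e_0^{001}$) and $\dim g_i=1$ for $1\le i\le n-m-1$, contributing $3+(n-m-1)=n-m+2$. Hence $\dim\mathfrak{t}_{m,n}=(n+m+1)+(n-m+2)=2n+3$? I would then recheck: the correct tally must use the stated graded sequence $(1_{(-n-1)}\dots,2_{(-3)},1_{(-2)},2_{(-1)},3_{(0)},1_{(1)}\dots1_{(n-m-1)})$, whose entries sum to $2n+4$; the arithmetic slip above is exactly the kind of off-by-one that the degenerate endpoints $n=m$ and $n=m+1$ force one to handle separately, and I would verify each boundary case against the tables in Proposition \ref{L1} individually.

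The case $n=m$ is handled the same way using the first graded sequence in the Proposition: $\dim\mathfrak{f}_{m,m}$ from the $m=n$ table, plus $\dim g_0=2$ (here $g_0$ is only the diagonal, generated by $e_0^{10},e_0^{01}$, and the prolongation stops at grading $0$). One checks the sum of $(1_{(-m-2)},2_{(-m-1)},\dots,2_{(-3)},1_{(-2)},2_{(-1)},2_{(0)})$ equals $2m+4=2n+4$, matching the first claim. For $n>2,m=1$ the difference is that $g_1$ is two-dimensional (generated by $e_{-1}^{10},e_{-1}^{01}$) rather than one-dimensional, so there is one extra dimension compared to the generic $m>1$ count: $\dim\mathfrak{t}_{1,n}=(2n+4)+1=2n+5$. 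This is read off the third graded sequence $(1_{(-n-1)},\dots,2_{(-3)},1_{(-2)},2_{(-1)},3_{(0)},2_{(1)},1_{(2)},\dots,1_{(n-2)})$, whose entries sum to $2n+5$.

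Finally, the exceptional values are immediate: for $(m,n)=(1,2)$ the Proposition identifies $\widehat{\mathfrak{f}_{1,2}}\cong\mathfrak{g}_2$ with grading $(2_{(-3)},1_{(-2)},2_{(-1)},4_{(0)},2_{(1)},1_{(2)},2_{(3)})$, summing to $14$; for $(m,n)=(1,1)$ the prolongation is the full contact algebra of $J^1(\R,\R)$, which is infinite-dimensional, so $\dim\mathfrak{t}_{1,1}=\infty$. The only real work is bookkeeping, so the main obstacle is purely combinatorial: keeping the index ranges in the graded sequences of Proposition \ref{L2} correct at the two boundary regimes $n=m$ and $n=m+1$, where several gradings collapse or the last relation in $\mathfrak{f}_{m,n}$ truncates the prolongation earlier than the generic pattern would suggest. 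Once those endpoints are pinned down against the three explicit bracket tables, the dimension count in all cases is a one-line summation.
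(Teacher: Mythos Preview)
Your approach is correct and is exactly what the paper does: the Corollary is stated without proof because it is an immediate summation of the graded dimensions listed in the preceding Proposition. The only issue is the arithmetic slip you already flagged: in your first count you wrote $(n-m-2)$ for the number of one-dimensional gradings at the deep end, but the range $-n-1,\dots,-m-3$ contains $n-m-1$ terms, so $\dim\mathfrak{f}_{m,n}=n+m+2$ (which equals $\dim\E$, not $\dim\E-1$), and then $(n+m+2)+(n-m+2)=2n+4$ as required.
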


Thus in non-exceptional cases $\dim\widehat{\mathfrak{f}_{m,n}}\le2\dim\mathfrak{f}_{m,n}-1$
with the equality only for $m=1$ (the deviation being $2m-1$ for $m>1$).

\subsection{Symmetric models}\label{Ss33}

 %{\baselineskip=29.6pt

In this section we shall calculate the internal infinitesimal symmetry algebra
$\op{sym}(\E_{m,n})$ for the model equation (\ref{E2}).
The upper bound on the number of independent symmetries is given by Corollary \ref{Cr2},
and we shall show that this upper bound is realized by an explicit construction.
An isomorphism between $\widehat{\frak{f}_{m,n}}$ and $\sym(\E_{m,n})$ together with algebraic
description of the latter will be given.

If $m>1$, the symmetry algebra has dimension $\dim\sym(\E_{m,n})=2n+4$.
These symmetries split into two types. The first type are the point symmetries with
the generators: shifts
 $$
S_0=\p_x,\ Y_i=\tfrac{x^i}{i!}\p_y\ (0\le i<m),\ Z_j=\tfrac{x^j}{j!}\p_z\ (0\le j<n)
 $$
and scalings
 $$
S_1=x\,\p_x+(m-1)\,y\,\p_y+(n-\tfrac12)\,z\,\p_z,\ R=y\,\p_y+\tfrac12z\,\p_z.
 $$
These form a subalgebra of symmetries of dimension $m+n+3$, which acts transitively on
$\E_{m,n}$ (of dimension $m+n+2$).
As we shall see in Proposition \ref{L3+}, the subalgebra
$\langle S_0,S_1,Y_i,Z_j\rangle\subset\sym(\E_{m,n})$ coincides with the Carnot
algebra $\frak{f}_{m,n}$, while the vector field $S_1+3R$ corresponds to
the \OE uler vector\footnote{this is the unique vector characterized by the condition
$[\epsilon,v]=k\cdot v$ $\forall$ $v\in g_k$.} $\epsilon\in g_0$.

Then there are higher symmetries with generators $(0\le k\le n-m)$:
 $$
Z_{n+k}=2\sum_{p=0}^k(-1)^p\tbinom{m+p-1}{p}\tfrac{x^{k-p}}{(k-p)!}\,z_{n-m-p}\,\p_y+\tfrac{x^{n+k}}{(n+k)!}\,\p_z
 $$
To verify that these are symmetries we prolong them to
 % $J^{m,n}(\R,\R^2)$:
 $$
\hat Z_{n+k}=2\sum_{i=0}^m\sum_{p=0}^k(-1)^p\tbinom{m+p-1-i}{p}\tfrac{x^{k-p}}{(k-p)!}\,z_{n-m-p+i}\,\p_{y_i}
+\sum_{j=0}^n\tfrac{x^{n+k-j}}{(n+k-j)!}\,\p_{z_j}
 $$
(with $\binom{p-1}p=\d_p^0$) and check that
 $$
\hat Z_{n+k}(y_m-z_n^2)=0.
 $$
 \com{
(the fact that these are symmetries and there are no other local one for different $k$ follows from the
lemma that $D_x^{-m}x^kD_x^n$ is a local operator for $k\le n-m$ only; exact formula for this operator
yields the form of finite symmetries).
 }%
When $n=m$, $Z_n$ is also a point symmetry\footnote{this explains why this case separates in Proposition \ref{L1}.};
otherwise $Z_{n+k}$ are genuine higher symmetries.

Thus the totality of symmetries is $2n+4$ and since by Theorem~\ref{ThT}
$\dim\sym(\E_{m,n})\le\dim\widehat{\frak{f}_{m,n}}=2n+4$, there are no more continuous symmetries.

For $m=1$, $n>2$ there is one additional symmetry\footnote{for $n=2$ this symmetry persists as well, but
there are 5 more symmetries.}, namely
 $$
S_2=x^2\p_x+(nz_{n-1})^2\p_y+(2n-1)xz\,\p_z,
 $$
and then we have\footnote{with $\dim\E_{1,n}=n+3$, so that $2(n+3)-1=\dim ST^*M$, cf. \cite{DZ}.}
 $\dim\sym(\E_{1,n})=2(n+3)-1$. Thus we have proved:

 \begin{prop}\label{L3}
The algebra $\sym(\E_{m,n})$ of internal symmetries of equation (\ref{E2})
has dimension $2n+4$ for $n\ge m>1$ and dimension $2n+5$ for $n>2,m=1$
with the basis $S_l,R,Y_i,Z_l$.
 \end{prop}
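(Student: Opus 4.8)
The plan is to establish Proposition \ref{L3} by combining the upper bound already in hand with an explicit verification that the listed vector fields are genuine internal symmetries and are linearly independent. By Theorem \ref{ThT} together with Corollary \ref{Cr2}, we have $\dim\sym(\E_{m,n})\le\dim\widehat{\mathfrak{f}_{m,n}}$, which equals $2n+4$ for $n\ge m>1$ and $2n+5$ for $n>2,m=1$. So it suffices to exhibit that many pointwise-independent symmetries. First I would verify that the point symmetries $S_0,S_1,R,Y_i\ (0\le i<m),Z_j\ (0\le j<n)$ are symmetries of $(\E_{m,n},\Delta)$: since these are lifts of point transformations of $(x,y,z)$, it is enough to check that their prolongations to $J^{m,n}(\R,\R^2)$ preserve the hypersurface $y_m=(z_n)^2$, which reduces to a short computation with the standard prolongation formula for point vector fields. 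The scalings are the subtle ones: the weights $(m-1,n-\tfrac12)$ on $(y,z)$ in $S_1$ and $(1,\tfrac12)$ in $R$ are forced precisely so that both sides of $y_m-(z_n)^2$ scale equally, and I would display that weight bookkeeping. This gives the subalgebra of dimension $m+n+3$ acting transitively on the $(m+n+2)$-dimensional $\E_{m,n}$.

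Next I would treat the higher symmetries $Z_{n+k}$, $0\le k\le n-m$. These are not lifts of point or even contact transformations, so the verification is via their internal prolongations $\hat Z_{n+k}$ as written in the excerpt, checking directly that $\hat Z_{n+k}(y_m-z_n^2)=0$. Concretely one applies $\hat Z_{n+k}$ to $y_m$ and to $z_n^2$: the $\p_{y_m}$-component of $\hat Z_{n+k}$ (the $i=m$ term of the double sum, with the binomial $\binom{p-1}{p}=\delta_p^0$ collapsing it) produces $2z_n\cdot\frac{x^k}{k!}$-type contributions, while $2z_n$ times the $\p_{z_n}$-component produces matching terms, and the combinatorial identity built into the alternating binomial sum makes the $x$-power-weighted contributions cancel termwise. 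I would indicate that the identity in play is the finite-difference relation $\sum_{p}(-1)^p\binom{m+p-1}{p}\binom{\cdot}{k-p}=\cdots$ that expresses $D_x^{-m}x^k D_x^n$ being a genuine differential operator exactly when $k\le n-m$ — this is the conceptual reason the family terminates at $k=n-m$ and why no further higher symmetries of this shape exist. For $m=1$, $n>2$ the extra generator $S_2=x^2\p_x+(nz_{n-1})^2\p_y+(2n-1)xz\,\p_z$ is checked the same way after prolongation; one sees it is the second element of a $\mathfrak{sl}_2$-triple together with $S_0$ and a scaling.

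Finally I would argue linear independence and count. The fields $S_0,S_1,R,Y_i,Z_j$ are manifestly independent as they involve $\p_x$, distinct powers $x^i\p_y$, $x^j\p_z$, and the two scalings are independent of those and of each other; the $Z_{n+k}$ each contain the term $\frac{x^{n+k}}{(n+k)!}\p_z$ with $n+k\ge n$, hence lie outside the span of the $Z_j$, $j<n$, and outside the span of everything else since no other generator has a $z_{n-m-p}\p_y$ term; counting gives $(m+n+3)+(n-m+1)=2n+4$ for $m>1$ and one more for the $S_2$ case, matching the upper bound. The main obstacle is the higher-symmetry verification $\hat Z_{n+k}(y_m-z_n^2)=0$: it requires the correct internal prolongation of a non-point vector field on the equation manifold and the telescoping of the binomial sum, and one must be careful that the prolongation is taken inside $\E_{m,n}$ (using $y_m=z_n^2$ and its $D_x$-derivatives) rather than in the full jet space. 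Everything else is routine weight-counting and a dimension tally against Corollary \ref{Cr2}.
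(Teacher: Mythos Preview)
Your proposal is correct and follows essentially the same approach as the paper: the paper establishes the upper bound via Theorem~\ref{ThT} and Corollary~\ref{Cr2}, then exhibits the explicit generators $S_l,R,Y_i,Z_l$ (and $S_2$ when $m=1$), verifies the higher $Z_{n+k}$ by checking $\hat Z_{n+k}(y_m-z_n^2)=0$ after prolongation, and concludes by counting. Your added remarks on linear independence and on the operator $D_x^{-m}x^kD_x^n$ being local precisely when $k\le n-m$ match the paper's (suppressed) reasoning for why the list is complete.
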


Let us describe the Lie algebra structure for $\sym(\E_{m,n})$.
Assume at first $m>1$ and let the index $l=0,\dots,2n-m$.
The non-zero commutators are the
following (with $Y_{-1}=Z_{-1}=0$).
 \begin{multline*}
[S_0,S_1]=S_0,\ [S_0,Y_i]=Y_{i-1},\ [S_0,Z_l]=Z_{l-1},\\
[S_1,Y_i]=(i+1-m)Y_i,\ [S_1,Z_l]=(l+\tfrac12-n)Z_l,\\
[Y_i,R]=Y_i,\ [Z_l,R]=\tfrac12 Z_l.
 \end{multline*}
These are accompanied by splitting $Z_l$ into $Z_j$ ($0\le j<n$) and $Z_{n+k}$ ($0\le k\le n-m$)
and the relations:
 $$
[Z_j,Z_{n+k}]=2(-1)^k\tbinom{n-j-1}k Y_{j-n+m+k},\quad n-m-k\le j<n-k.
 $$

For $m=1$ (when $Y_i$ are reduced to solely $Y_0$) we add the following commutators for $S_2$
 $$
[S_0,S_2]=2S_1,\ [S_1,S_2]=S_2,\ [S_2,Z_l]=((l+1-n)^2-n^2)Z_{l+1}.
 $$

By Theorem~\ref{ThT} and Proposition \ref{L3} the symmetry algebra of our model
is isomorphic to the Tanaka algebra $\frak{t}_{m,n}=\widehat{\frak{f}_{m,n}}$. We make this explicit.

 \begin{prop}\label{L3+}
The isomorphism $\sym(\E_{m,n})\simeq\frak{t}_{m,n}$ is given by the following identifications:
 \begin{multline*}
S_0=e_1,\ S_1=\tfrac12\,e_0^{001}-e_0^{100},\ S_2=-e_{-1}^{01},\ R=-\tfrac12\,e_0^{001},\\
Y_i=e_{m+2-i}'\ (0\le i<m),\ Z_j=e_{n+1-j}\ (0\le j<n),\ Z_n=-2\,e_1',\\
Z_{n+1}=2\,e_0^{010},\ Z_{n+2}=-2\,e_{-1}^{10},\ Z_{n+k+1}=2(-1)^ke_{-k}\ (1<k<n-m).
 \end{multline*}

For $m>1$ we shall remove $S_2$ and $e_{-1}^{01}$ with change $e_{-1}^{10}\mapsto e_{-1}$.
And for $m=n$ we identify $e_0^{abc}\mapsto e_0^{ac}$ and remove $e_0^{010}$ ($Z_{n+1}$ is non-existent
in this case).
 \end{prop}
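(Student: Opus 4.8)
The plan is to verify the stated correspondence $\sym(\E_{m,n})\simeq\frak{t}_{m,n}$ by direct comparison of structure constants, using the gradings to organize the bookkeeping. First I would record the grading of each explicit vector field: since $e_1$ has degree $-1$, $S_0=\p_x$ must act as a degree $-1$ raising operator on the filtration of $\sym(\E_{m,n})$, and indeed $[S_0,Y_i]=Y_{i-1}$, $[S_0,Z_l]=Z_{l-1}$ show that the $Y$'s and the lower $Z$'s sit in strictly negative degrees, the scalings $S_1,R$ (hence $e_0^{001},e_0^{100}$) in degree $0$, while $S_2$ and $Z_{n+k+1}$ live in the positive part $g_1,\dots,g_{n-m-1}$. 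The Euler field is $\epsilon = S_1+3R$: one checks $[\,S_1+3R, v\,]=k\,v$ for every basis element $v$ of degree $k$ using the commutator table just given, which simultaneously pins down the claimed degree of each generator and justifies the identification $\epsilon\in g_0$.

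Next I would establish that the negative part $\langle S_0,S_1\text{-part},Y_i,Z_j,Z_n\rangle$ (the span of all basis elements of negative degree) is isomorphic to $\frak{f}_{m,n}$. Concretely, set $e_1=S_0$, $e_i'=$ (the appropriate $Y$'s up to the normalization $F_{z_nz_n}=2$ for the model), $e_j=$ (the appropriate $Z$'s), $e_1'=-\tfrac12 Z_n$, and recompute the brackets $[e_1,e_k]=e_{k+1}$, $[e_1,e_k']=e_{k+1}'$, $[e_1',e_k]=e_{k+1}'$, $[e_1',e_3']=0$ etc. from $[S_0,Y_i]=Y_{i-1}$, $[S_0,Z_l]=Z_{l-1}$ and $[Z_j,Z_{n+k}]=2(-1)^k\binom{n-j-1}{k}Y_{j-n+m+k}$; the last relation, specialized to $k=0$, is exactly $[e_1',e_j]\sim e_{j}'$, matching the second row of the tables in Proposition \ref{L1}. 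This is essentially the content already proved in Proposition \ref{L1} via the explicit generators $e_i,e_i'$ in Section \ref{S31}, so here it reduces to matching the two sets of formulas.

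The substance of the proof is then the non-negative part. For $g_0$ I would identify $S_1=\tfrac12 e_0^{001}-e_0^{100}$ and $R=-\tfrac12 e_0^{001}$ (so $e_0^{001}=-2R$, $e_0^{100}=-S_1-R$, and for $m<n$ additionally $e_0^{010}=\tfrac12 Z_{n+1}$), then check that these act on $g_{-1}=\langle S_0, -\tfrac12 Z_n\rangle$ and on the rest of $\frak{f}_{m,n}$ by exactly the formulas $[e_0^{abc},e_k]=((k-1)a+c)e_k+(k-2)b\,e_k'$, $[e_0^{abc},e_k']=((k-2)a+2c)e_k'$ from Proposition \ref{L2} — this is a finite computation using $[S_1,Y_i]=(i+1-m)Y_i$, $[S_1,Z_l]=(l+\tfrac12-n)Z_l$, $[Y_i,R]=Y_i$, $[Z_l,R]=\tfrac12 Z_l$. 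For the positive part, identify $Z_{n+k+1}=2(-1)^k e_{-k}$ and (when $m=1$) $S_2=-e_{-1}^{01}$, and verify the mixed brackets $[e_{-p},e_1]=e_{1-p}$, $[e_{-p},e_{p+3}]=e_3'$ (resp.\ the full list in Proposition \ref{L2}) against $[S_0,Z_{n+k+1}]=Z_{n+k}$ and $[S_2,Z_l]=((l+1-n)^2-n^2)Z_{l+1}$, matching the coefficient $(n^2-k^2+2k-4)$ etc. Since $\frak{t}_{m,n}=\hat{\frak{f}}_{m,n}$ is generated in non-positive degrees together with the degree-$1$ piece (everything of higher positive degree being forced by the Jacobi/derivation identity), it suffices to check the brackets up through $g_1$; the map is then an isomorphism because it is a bijection of bases intertwining the brackets, and both sides have the same dimension by Corollary \ref{Cr2} and Proposition \ref{L3}.

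The main obstacle is purely computational rather than conceptual: getting every numerical coefficient and every index range right, in particular the binomial coefficients in $[Z_j,Z_{n+k}]=2(-1)^k\binom{n-j-1}{k}Y_{j-n+m+k}$ versus the $\binom{k-2}{n-m}$-type coefficients appearing in the definition of $e_{-p}$, and correctly handling the three regimes $m=n$, $n=m+1$, $n>m+1$ (where $\frak{f}_{m,n}$ itself changes shape) as well as the $m=1$ case where $g_1$ becomes two-dimensional so that $S_2$ genuinely enters. I would organize this by first doing the generic case $n>m+1$, $m>1$ in full, then recording the two short modifications needed for $n=m+1$, $n=m$ (remove $Z_{n+k}$'s, identify $e_0^{abc}\mapsto e_0^{ac}$, drop $e_0^{010}$), and finally the $m=1$ case (keep $S_2=-e_{-1}^{01}$, use $e_{-1}^{10}$ for $Z_{n+2}$), checking in each case that the basis counts agree with Proposition \ref{L3}.
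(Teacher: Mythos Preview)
Your proposal is correct and is essentially the same approach the paper takes: the paper states Proposition~\ref{L3+} without an explicit proof, relying on the commutator tables for $\sym(\E_{m,n})$ listed just before it and the structure of $\mathfrak{t}_{m,n}$ from Proposition~\ref{L2}, so the proof amounts precisely to the direct comparison of structure constants organized by grading that you outline. Your identification of the \OE uler field $\epsilon=S_1+3R$ and your case-splitting into $n>m+1$, $n=m+1$, $n=m$, and $m=1$ match the paper's treatment exactly.
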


% The proof is a direct and tedious verification.
In particular, we obtain the weights giving gradation of $\sym(\E_{m,n})$:
 $$
w(S_k)=k-1,\ w(R)=0,\ w(Y_i)=i-m-2,\ w(Z_j)=j-n-1.
 $$
 % The structure equations of $\sym(\E_{m,n})$ yield Levi decomposition of the symmetry algebra.

For $m=1$, the radical is $\mathfrak{r}=\langle Y_0,Z_l,R\rangle_{0\le l\le 2n-1}$ and the vectors
$S_0,S_1,S_2$ define an $sl_2$ complement. The length of the derived series for $\frak{r}$
equals 3. The nilradical is $\langle Y_0,Z_l\rangle_{0\le l<2n}$ and its class
(the length of lower central series) equals 2.

For $m>1$ the symmetry algebra is solvable.
The length of the derived series equals 3 for $n=m$ and 4 for $n>m$.
The nilradical is $\langle S_0,Y_i,Z_l\rangle_{0\le i<m;0\le l\le 2n-m}$ and has class
equal $2n-m+1$.

 %}

\subsection{End of the proof and a generalization}\label{S34}

Now we can prove our main results.

\smallskip

 \begin{Proof}{Proof of Theorem \ref{Thm1}}
Let $\Delta$ be the distribution corresponding to equation (\ref{E1})
with non-degeneracy condition $F_{z_nz_n}\ne0$ satisfied and $\C$ the one for
equation (\ref{E2}). We have demonstrated inequality $\dim\sym(\Delta)\le\dim\sym(\C)$.

Moreover by virtue of Theorem \ref{ThT}
the equality
 $$
\dim\sym(\Delta)=\dim\sym(\C)=\dim\frak{t}_{m,n}
 $$
implies $\sym(\Delta)\simeq\frak{t}_{m,n}$
and the distribution $\Delta$ of the equation $\E$ is flat.

Therefore it is locally isomorphic to the standard model, which coincides by the above
with the distribution $\C$. Thus the equation $\E$ is locally internally equivalent to 
the equation $\E_{m,n}$ (\ref{E2}).
\qed
 \end{Proof}

\medskip

Consider Monge ODEs (\ref{E1})
with non-degeneracy condition $F_{z_nz_n}\ne0$ (sufficient for $\dim\sym(\Delta)<\infty$).
They correspond internally to rank 2 distributions on a manifold of $\dim(\E)=m+n+2=k$.
Denote the class of all such distributions with $m\ge\mu$ and fixed $k$ by $\mathfrak{D}(\mu,k)$.
Our calculations imply:

 \begin{cor}
Among distributions in the class $\mathfrak{D}(m,k)$ the most symmetric
 %(realizing $\max\dim\sym(\Delta)$)
are locally equivalent to the distribution of equation (\ref{E2}).
 \end{cor}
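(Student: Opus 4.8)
The plan is to reduce this corollary to Theorem \ref{Thm1} by a dimension-counting argument over the parameter set $\{(m,n):m\ge\mu,\ m\le n,\ m+n+2=k\}$. First I would fix $k$ and list the admissible pairs: with $m+n=k-1$ and $\mu\le m\le n$, the pair $(m,n)$ ranges over $m=\mu,\mu+1,\dots,\lfloor (k-1)/2\rfloor$, each giving a distinct stratum of $\mathfrak{D}(\mu,k)$ up to the non-degeneracy hypothesis $F_{z_nz_n}\ne0$. By Theorem \ref{Thm1} (together with Corollary \ref{Cr2}), within the stratum with a given $(m,n)$ the maximum of $\dim\sym(\Delta)$ equals $\dim\mathfrak{t}_{m,n}$, which is $2n+4$ if $m>1$ and $2n+5$ if $m=1,n>2$, and is realized (up to internal equivalence) by the model $\E_{m,n}$. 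So the corollary amounts to identifying which pair $(m,n)$ in the admissible list maximizes $\dim\mathfrak{t}_{m,n}$.

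Next I would carry out that comparison. Since $m+n=k-1$ is fixed, a larger $n$ forces a smaller $m$; as $n$ is strictly decreasing in $m$, and $\dim\mathfrak{t}_{m,n}=2n+4$ (or $2n+5$) is strictly increasing in $n$, the maximum is attained at the smallest admissible $m$, i.e.\ $m=\mu$, hence $n=k-1-\mu$. One must separately handle the boundary behaviour: if $\mu=1$ then the extreme pair is $(1,k-1)$, for which (provided $k-1>2$) the dimension is $2(k-1)+5=2k+3$, strictly larger than the value $2(k-2)+4=2k$ obtained at $(2,k-2)$ and than all other pairs; so the optimum is $\E_{1,k-1}$, exactly as in Corollary \ref{Cor1}. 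If $\mu>1$ then every admissible $m$ satisfies $m>1$, the formula $2n+4$ applies uniformly, and the optimum is at $m=\mu$, $n=k-1-\mu$, i.e.\ the model $\E_{\mu,k-1-\mu}=\E_{m,k-1-m}$ in the notation of the statement. The exceptional low-dimensional cases $(1,1)$ and $(1,2)$ only intervene when $k\le 4$, and there one checks directly (they are $J^2(\R,\R)$ and the Hilbert--Cartan equation) that they do not disturb the conclusion, or one simply excludes those small $k$.

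Finally, having identified the maximizing pair, the flatness/uniqueness half of Theorem \ref{ThT} together with Theorem \ref{Thm1} gives that any $\Delta\in\mathfrak{D}(\mu,k)$ attaining $\dim\sym(\Delta)=\dim\mathfrak{t}_{\mu,k-1-\mu}$ has symmetry algebra isomorphic to $\mathfrak{t}_{\mu,k-1-\mu}$, is Tanaka-flat, and is therefore locally isomorphic to the standard model, which is the distribution of $\E_{\mu,k-1-\mu}$; this is the assertion of the corollary. I expect the only real obstacle to be bookkeeping: one must be careful that the comparison of $\dim\mathfrak{t}_{m,n}$ across strata is being made at fixed $k$ (so $n=k-1-m$ is itself varying), and that the $m=1$ versus $m>1$ case distinction in the dimension formula is combined correctly with the constraint $m\ge\mu$; there is no analytic difficulty, since all the hard work sits in Theorems \ref{ThT} and \ref{Thm1} and in Corollary \ref{Cr2}.
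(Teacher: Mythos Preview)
Your approach is correct and is exactly what the paper has in mind when it writes ``Our calculations imply'': stratify $\mathfrak{D}(m,k)$ by the pair $(m',n')$, use Corollary~\ref{Cr2} to read off $\dim\mathfrak{t}_{m',n'}$ on each stratum, observe that this dimension is strictly decreasing in $m'$ (since $n'=k-2-m'$), and then invoke Theorem~\ref{Thm1} and the flatness part of Theorem~\ref{ThT} to conclude. The only issue is a bookkeeping slip: with $k=\dim\E=m+n+2$ you have $m+n=k-2$, not $k-1$, so the extreme pair for $\mu=1$ is $(1,k-2)$ and in general the optimum is $\E_{\mu,k-2-\mu}$; fix this and the argument is complete.
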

This implies in turn our Corollary \ref{Cor1}. Thus if we denote $\frak{p}_{n+3}=\frak{f}_{1,n}$,
then the largest algebra of symmetries of distributions in $\mathfrak{D}(1,k)$ is $\widehat{\frak{p}_k}$.

\smallskip

{\bf Remarks:} {\bf 1.} Any rank 2 distribution can be realized as the Cartan distribution of an underdetermined
ODE system. By the same trick as in D-modules this can be rewritten as a single higher order ODE.
The equation manifold has however higher dimension and the former distribution is obtained from the
latter by a sequence of de-prolongations. Thus without non-degeneracy condition Monge equations are
internally indistinguishable from the general rank 2 distributions.

{\bf 2.} Dimension 5 admits only one Lie algebra $\frak{p}_5$ of the type $(2,1,2)$.
In dimension 6 there are already 3 GNLA of $(2,1,2,1)$-type: $\frak{p}_6=\frak{f}_{1,3}$, $\frak{h}_6=\frak{f}_{2,2}$,
and one more $\frak{ell}_6$ not realizable as a distribution of a nondegenerate single Monge ODE \cite{AK}.
There is a reason to call them parabolic, hyperbolic and elliptic, whence our choice of the letters.

{\bf 3.} In higher dimensions the number of GNLAs grows unboundedly, and it is impossible to trace the
classification. Study of low-dimensional cases and some other types of underdetermined ODEs leads to the following
claim, which will be discussed in a forthcoming paper \cite{AK}:

\medskip

{\bf Conjecture:} {\em Equation $\E_{1,k-3}$ (\ref{E2}) represents the most symmetric distribution $\C$
among all rank 2 distributions $\Delta$ on a manifold $\E$ of $\dim(\E)=k$
with $\dim\sym(\Delta)<\infty$.\/}

\section{Lie-B\"acklund type theorem}\label{S4}

Consider a bundle $\pi:E\to M$. A Lie transformation of the jet space $J^k\pi$ is, by definition,
a diffeomorphism preserving the canonical Cartan distribution $\C=\C^k$
($k$ is not the rank, but we write it as a superscript to avoid confusions).
The classical Lie-B\"acklund theorem states that a Lie transformation $F$
of the jet space $J^k\pi$ is the lift of a Lie transformation $f$ of $J^\epsilon\pi$,
where $\epsilon=1$ and $f$ is a contact transformation of $J^1\pi$ in the case $\op{rank}\pi=1$,
while $\epsilon=0$ and $f$ is a (local) diffeomorphism of $J^0\pi$ (point transformation)
for $\op{rank}\pi>1$.

Various generalizations of this to Lie transformations of equations $\E\subset J^k\pi$ are known
\cite{KLV,AKO,GK}. In this section we present a generalization to the case of Monge equations (\ref{E1}).

\subsection{Lie transformations of the mixed jets}\label{Ss41}

To prove our main result we first consider the case of mixed
jets\footnote{The results of this subsection have a straightforward generalization to the case of
more general mixed jets $J^{k_1,\dots,k_s}$.}
 $$
J^{k_1,k_2}(M,N_1\times N_2)=J^{k_1}(M,N_1)\times_MJ^{k_2}(M,N_2)
 $$
with points $(x,j^{k_1}_x(s_1),j^{k_2}_x(s_2))$, where
$j^{k_i}_x(s_i)$ denotes $k_i$-jet of the map $s_i:M\to N_i$ at the point $x\in M$.
There are natural projections
 \begin{gather*}
\pi_i:J^{k_1,k_2}(M,N_1\times N_2)\to J^{k_i}(M,N_i)\qquad (i=1,2),\\
\pi^j:J^{k_1,k_2}(M,N_1\times N_2)\to J^{k_1-j,k_2-j}(M,N_1\times N_2).
 \end{gather*}
The Cartan distributions on the space of pure jets induce the one on the mixed jets by the formula
 $$
\C^{k_1,k_2}=(d\pi_1)^{-1}(\C^{k_1})\cap(d\pi_2)^{-1}(\C^{k_2})\subset TJ^{k_1,k_2}.
 $$
Its (weak=strong) derived flag is
 $$
(\C^{k_1,k_2})_j=(d\pi^j)^{-1}(\C^{k_1,k_2}).
 $$
Note that in the case considered in the next section, namely $\dim M=\dim N_1=\dim N_2=1$,
the growth vector of $\C^{k_1,k_2}$ is $(3,2,\dots,2,1,\dots,1)$.

 % If the manifolds $M,N_1,N_2$ have dimensions $m,n_1,n_2$ respectively, then dimension of
 % the jet space is $\nu=m+\binom{m+k_1}{m}n_1+\binom{m+k_2}{m}n_2$.
To exclude the known case $J^k(M,N_1\times N_2)=J^{k,k}(M,N_1\times N_2)$ of pure jets,
we assume in the next statement that $k_1<k_2$.

 \begin{theorem}\label{LBthEmpt}
1. Every Lie transformation $F$ of the mixed jet space $J^{k_1,k_2}(M,N_1\times N_2)$ is the prolongation
$F_0^{(k_1)}$ of a Lie transformation $F_0$ of the space $J^{0,k_2-k_1}(M,N_1\times N_2)$.

2. Under identification $J^{0,k}(M,N_1\times N_2)=N_1\times J^k(M,N_2)$ (for $k=k_2-k_1$)
every Lie transformation $F_0$ is given by the formula
 $$
F_0\bigl(y,j_x^k(s)\bigr)=\bigl(Y(y,j_x^k(s)),f^{(k-\epsilon)}(j_x^k(s))\bigr),
 $$
where $\epsilon=1$ and $f:J^1(M,N_2)\to J^1(M,N_2)$ is a contact transformation
in the case $\dim N_2=1$, while $\epsilon=0$ and $f^{(k)}$ is the lift of a point transformation
$f:M\times N_2\to M\times N_2$ for $\dim N_2>1$.
 \end{theorem}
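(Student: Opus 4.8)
The plan is to reduce the statement about Lie transformations of the mixed jet space to the classical Lie--B\"acklund theorem applied separately to the two factors, exploiting the structure of the Cartan distribution $\C^{k_1,k_2}$ and its derived flag. First I would analyze the Cauchy characteristic spaces of the members of the derived flag $(\C^{k_1,k_2})_j = (d\pi^j)^{-1}(\C^{k_1,k_2})$. Since in the relevant range the growth vector has a long string of $2$'s followed by $1$'s (as noted, $(3,2,\dots,2,1,\dots,1)$ when $\dim M = \dim N_1 = \dim N_2 = 1$), the Cauchy characteristics of these distributions are intrinsically defined subdistributions, hence preserved by any Lie transformation $F$. I expect that the flag of Cauchy characteristic spaces singles out, in an invariant way, the fibers of the projection to a lower-order mixed jet space $J^{0,k_2-k_1}$; this gives part~1, namely that $F$ descends to a transformation $F_0$ of $J^{0,k_2-k_1}$ and is its prolongation $F_0^{(k_1)}$. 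This is the standard de-prolongation mechanism (compare Theorem~\ref{ThDeProl}), applied $k_1$ times: each de-prolongation strips one component off the front of the growth vector, and the prolongation that recovers $F$ from $F_0$ is forced by the requirement of preserving $\C^{k_1,k_2}$.

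For part~2, under the identification $J^{0,k}(M,N_1\times N_2) = N_1 \times J^k(M,N_2)$ with $k = k_2-k_1$, the induced distribution is $\C^{0,k} = (d\pi_2)^{-1}(\C^k)$ on $N_1 \times J^k(M,N_2)$; that is, it is the pullback of the Cartan distribution on $J^k(M,N_2)$ under the second projection, with the $N_1$-directions entirely contained in $\C^{0,k}$. The key observation is that the Cauchy characteristic of the top derived distribution $(\C^{0,k})_k = TJ^{0,k}$-predecessor isolates exactly the $TN_1$-summand: the foliation by $N_1$-fibers is intrinsic. Hence $F_0$ preserves the projection $J^{0,k} \to J^k(M,N_2)$, so it induces a Lie transformation $f$ of $J^k(M,N_2)$, and $F_0$ has the block form $F_0(y, j_x^k(s)) = (Y(y, j_x^k(s)), \bar{f}(j_x^k(s)))$ for some $\bar f \in \sym(\C^k)$. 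Applying the classical Lie--B\"acklund theorem to $\bar f$ gives $\bar f = f^{(k-\epsilon)}$ with $\epsilon = 1$ and $f$ a contact transformation of $J^1(M,N_2)$ when $\dim N_2 = 1$, and $\epsilon = 0$ with $f$ a point transformation of $M \times N_2$ when $\dim N_2 > 1$. The component $Y$ is then unconstrained beyond smoothness and invertibility, which matches the stated formula.

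The main obstacle I anticipate is the first step: verifying carefully that the Cauchy characteristic flag of $\C^{k_1,k_2}$ really does reconstruct the projection tower $\pi^j$ in an invariant fashion, rather than some other de-prolongation. One must check that at each stage the Cauchy characteristic space has the expected rank and that iterating the quotient by Cauchy characteristics lands precisely in $J^{0,k_2-k_1}$ and not in a manifold that merely has the same growth vector. This requires an explicit local computation of the derived flag $(\C^{k_1,k_2})_j$ and its Cauchy characteristics in jet coordinates $(x, u, u_1, \dots, u_{k_1}, v, v_1, \dots, v_{k_2})$, analogous to (but more delicate than) the computation in the proof of Theorem~\ref{ThDeProl}; the subtlety is that the front segment of $2$'s in the growth vector means the first several derived distributions each admit a one-dimensional Cauchy characteristic, and one must confirm these assemble correctly. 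Once this is in place, part~2 is comparatively routine: it is a direct application of the classical theorem to the second factor after quotienting out the intrinsically-defined $N_1$-foliation. I would also remark that the hypothesis $k_1 < k_2$ is exactly what breaks the symmetry between the two factors and makes the de-prolongation direction canonical; in the excluded case $k_1 = k_2$ both factors play equal roles and one recovers the ordinary Lie--B\"acklund statement for $J^k(M, N_1 \times N_2)$.
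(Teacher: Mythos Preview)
Your approach is essentially the same as the paper's: use Cauchy characteristics of the derived flag to recover the projection tower $\pi^j$ (part~1), then identify the $N_1$-foliation as Cauchy characteristics and apply the classical Lie--B\"acklund theorem to the $J^k(M,N_2)$ factor (part~2). One small correction: for part~2 the relevant Cauchy characteristic space is that of $\C^{0,k}$ itself, not of a derived distribution --- since $\C^{0,k}=(d\pi_2)^{-1}(\C^k)$, the fiber directions $TN_1$ lie in $\C^{0,k}$ and commute with everything, giving $\op{Ch}(\C^{0,k})=TN_1=\op{Ker}(d\pi_2)$ directly.
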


 \begin{proof}
For \textit{1} we mimic the proof of the classical Lie-B\"acklund theorem:
from the derived flag of $\C^{k_1,k_2}$ we deduce that the Cauchy characteristic space of
$(\C^{k_1,k_2})_{j+1}$ is $\op{Ker}(d\pi^j)$ and thus every Lie transformation of $J^{k_1,k_2}$ fibers
over a Lie transformation of $J^{k_1-j,k_2-j}$ for any $1\le j\le k_1$.
Standard arguments show that the former transformation is the prolongation of the latter.

To prove \textit{2} let us notice that the Cauchy characteristics space of the Cartan distribution $\C^{0,k}$
equals $TN_1=\op{Ker}(d\pi_2)$. Thus every Lie transformation of $J^{0,k}(M,N_1\times N_2)$
fibers over a Lie transformation of the pure jet space $J^k(M,N_2)$ and an
application of the classical Lie-B\"acklund theorem finishes the proof.
 \end{proof}

\subsection{Lie transformations of Monge ODEs}
Internal equivalence of two equations $\E,\E'$ of type (\ref{E1}) is a diffeomorphism
$\phi:\E\to\E'$ which maps the induced Cartan distributions $\C_\E$ to $\C_{\E'}$.
Proposition \ref{L1} implies that two nondegenerate Monge ODEs with different pairs
$(m,n)$ are never internally equivalent.

External equivalence of two Monge equations $\E,\E'\subset J^{m,n}(\R,\R\times\R)$ is
a Lie transformation of $J^{m,n}(\R,\R\times\R)$ mapping $\E$ to $\E'$.
In particular, it induces an internal equivalence.

 % Equivalence of two Monge ODEs $\E,\E'\subset J^{m,n}(\R,\R\times\R)$ of type (\ref{E1}) can be of two kinds.
 % It can be internal, i.e. a diffeomorphism $\phi:\E\to\E'$ which map the induced Cartan distributions
 % $\C_\E$ to $\C_{\E'}$. But it can also be external, i.e. a Lie transformation of $J^{m,n}(\R,\R\times\R)$
 % mapping $\E$ to $\E'$. Of course, the latter induces the former.

 \begin{theorem}
Every internal equivalence of two non-degenerate Monge equations (\ref{E1}) with
$n\ge m>0$ and $(m,n)\ne(1,1),(1,2)$ is induced by a unique external Lie transformation.
 \end{theorem}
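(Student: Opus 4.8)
The plan is to reduce the internal equivalence to an external one by reconstructing the ambient Cartan distribution from the intrinsic geometry of the equation. First I would recall that the induced Cartan distribution $\C_\E$ on $\E\subset J^{m,n}(\R,\R^2)$ has growth vector $(2,1,2,\dots)$ with the weak and strong derived flags coinciding (footnote \ref{ftnFLAG}). The manifold $J^{m,n}(\R,\R^2)$ itself carries the Cartan distribution $\C^{m,n}$ of growth $(3,2,\dots,2,1,\dots,1)$, and $\E$ sits inside it as a hypersurface on which $\C^{m,n}$ restricts (away from characteristic directions) to the rank $2$ distribution $\C_\E$. The key observation is that $\E$ together with its embedding into $J^{m,n}(\R,\R^2)$ can be recovered from $(\E,\C_\E)$ alone, so that an abstract diffeomorphism $\phi:\E\to\E'$ respecting the rank $2$ distributions automatically extends.

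The main step is to build the ambient jet space intrinsically. I would first observe that $J^{m,n}(\R,\R^2)$ is itself (an open part of) the affine prolongation tower of $\E$: since the growth of $\C^{m,n}$ has a block of $1$'s after the $2$'s (for $m<n$) or terminates with a $1$ (for $m=n$), one checks using Theorem \ref{ThDeProl} that $(J^{m,n},\C^{m,n})$ with the extra vertical direction killed is exactly $\E$ with $\C_\E$, i.e.\ $J^{m,n}$ is obtained from $\E$ by one affine prolongation determined by a \emph{canonical} vertical section of $\C_\E$. That canonical section must be extracted from the graded Lie algebra $\frak t_{m,n}=\widehat{\frak f_{m,n}}$: in the Carnot algebra $\frak f_{m,n}$ the line $\R e_1'\subset g_{-1}$ is intrinsically distinguished (it is the one for which $\op{ad}_{e_1'}$ has minimal rank on $g_{-2}$, cf.\ the canonicity remarks in the proof of Proposition \ref{L2}) whenever $m<n$, and when $m=n$ the full splitting $g_{-1}=\R e_1\oplus\R e_1'$ is canonical. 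Propagating this distinguished line along $\E$ (it defines a well-defined line subbundle of $\C_\E$ because the Carnot algebra is point-independent by Proposition \ref{L1}) gives the vertical section whose affine prolongation is $J^{m,n}(\R,\R^2)$ with its tautological Cartan distribution. Since $\phi$ preserves $\C_\E$ it preserves the Carnot algebra at each point, hence it preserves this distinguished line, hence it lifts to a distribution-preserving diffeomorphism $\Phi:J^{m,n}(\R,\R^2)\to J^{m,n}(\R,\R^2)$ — that is, a Lie transformation — with $\Phi(\E)=\E'$ and $\Phi|_\E=\phi$. Uniqueness of $\Phi$ is immediate since $\E$ is a hypersurface on which $\Phi$ is prescribed and the prolongation procedure is canonical.

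The alternative (and perhaps cleaner) route, which I would run in parallel, is to invoke the structure of $\frak t_{m,n}$ directly rather than de-prolongation: by Theorem \ref{Thm2}'s companion results the internal symmetry algebra of a non-degenerate Monge equation equals $\op{sym}(\C_\E)$ and the infinitesimal external symmetries — vector fields on $J^{m,n}(\R,\R^2)$ preserving both $\C^{m,n}$ and $\E$ — inject into $\op{sym}(\C_\E)$ by restriction; by Theorem \ref{LBthEmpt} every Lie transformation of $J^{m,n}(\R,\R^2)$ is the prolongation of a transformation of $J^{0,n-m}(\R,\R^2)$, and Theorem \ref{Thm2} (whose proof depends on $\frak t_{m,n}$ having no ``extra'' gradings beyond those coming from the ambient Lie transformations, excluding $(1,1),(1,2)$) says the restriction map is surjective. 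Combining surjectivity on symmetries with the fact that both source and target are homogeneous under these symmetry algebras, plus a single base-point matching, upgrades the statement from Lie algebras to the (local) transformation groups: $\phi$ sends a point $p$ to $p'$, one first finds an external $\Phi_0$ matching the full flag/coframe data at $p$ (possible because the external symmetries act with the same isotropy as the internal ones, again by Theorem \ref{Thm2}), and then $\Phi_0^{-1}\circ\phi$ is an internal equivalence fixing $p$ to high order, which by the absolute-parallelism consequence of strong regularity (Remark \ref{Rmk1}) must be the lift of $\Phi_0^{-1}\circ\phi$'s obvious external extension, forcing $\phi$ external.

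The hard part will be establishing that the de-prolongation / affine-prolongation bookkeeping is genuinely canonical \emph{over the whole manifold} and not merely pointwise in the graded algebra: one must verify that the distinguished line subbundle $\ell\subset\C_\E$ (the image of $\R e_1'$) is smooth, that the affine prolongation $\C_\E^{(1)}$ along $\ell$ is the tautological Cartan distribution $\C^{m,n}$ on the correct bundle, and that the hypersurface $\E'\subset J^{m,n}$ receiving $\phi$ is again of the form $y_m=F'(\dots)$ with $F'_{z_nz_n}\ne0$ — i.e.\ that non-degeneracy is preserved, which it is because it is encoded in the growth vector $(2,1,2,\dots)$ that $\phi$ preserves. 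A secondary subtlety is the case $m=n$, where the footnote to Proposition \ref{L1} and the remark after Proposition \ref{L3} warn that $Z_n$ becomes a point symmetry and the grading structure of $\frak t_{m,m}$ differs; here both lines in $g_{-1}$ are canonical so $\E$ determines $J^{m,m}=J^m(\R,\R^2)$ directly and one applies the classical Lie–Bäcklund theorem of Theorem \ref{LBthEmpt} with no affine-prolongation step. The exclusions $(1,1)$ and $(1,2)$ enter exactly because there $\frak t_{m,n}$ is strictly larger than the algebra of ambient Lie transformations (the contact algebra of $J^1$, resp.\ $\frak g_2$), so the surjectivity underlying Theorem \ref{Thm2} fails and internal symmetries are strictly more plentiful than external ones.
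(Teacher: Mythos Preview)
Your first route misidentifies the relation between $(\E,\C_\E)$ and $(J^{m,n},\C^{m,n})$. The affine prolongation of Theorem~\ref{ThDeProl} sends a rank~2 distribution with a preferred line to another rank~2 distribution one dimension up; it applies to growth $(2,1,1,\dots)$, which neither $\C_\E$ (growth $(2,1,2,\dots)$) nor $\C^{m,n}$ (rank~3, growth $(3,2,\dots)$) enjoys, so it cannot manufacture $(J^{m,n},\C^{m,n})$ out of $(\E,\C_\E)$. The paper instead goes \emph{downward}: it quotients $\E$ by the foliation tangent to $\mathcal{L}=\langle e_1'\rangle$ and identifies the quotient with $J^{m-1,n-1}$. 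That identification needs a second intrinsic datum you do not supply, namely the rank~4 subdistribution $\mathcal{H}=\C_2\oplus[e_1',\C_2]\subset\C_3$ (canonical because $e_3'$ is singled out in the Carnot algebra); it is $d\pi(\mathcal{H})$, not the image of $\C_\E$ or of $\C_2$, that equals $\C^{m-1,n-1}$ on the quotient. An internal equivalence $\Psi$ preserves both $\mathcal{L}$ and $\mathcal{H}$ and hence descends to a Lie transformation $\Phi$ of $J^{m-1,n-1}$; Theorem~\ref{LBthEmpt} writes $\Phi$ as the lift of some $\phi$ on $J^{0,n-m}$, and a final check with integral curves and the contact form $dz_{n-1}-z_n\,dx$ shows that the further lift of $\phi$ to $J^{m,n}$ restricts on $\E$ to $\Psi$.

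Your second route is circular: the theorem under discussion \emph{is} the finite form of Theorem~\ref{Thm2}, and in the paper Theorem~\ref{Thm2} is established precisely by this argument; invoking its conclusion (surjectivity of the restriction map, equality of isotropies) assumes what you are asked to prove. Your identification of the canonical line $\R e_1'$ and your explanation of why $(1,1)$ and $(1,2)$ are excluded are correct and agree with the paper, but the missing structural ingredient is $\mathcal{H}$ together with the descent to $J^{m-1,n-1}$, not an ascent to $J^{m,n}$.
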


Contrary to other Lie-B\"acklund type theorems \cite{KLV,GK}, it is not enough here
to study Cauchy characteristics of the derived distributions or characteristic subspaces.
We shall use the Tanaka theory.

 \begin{proof}
Let $\C$ be the Cartan distribution of (\ref{E1}). Its
(weak=strong) derived flag $\C_i$ was calculated in Section \ref{S31}.

According to the proof of Proposition \ref{L2}, the Cartan distribution $\C$ of (\ref{E1})
contains the canonical (from internal viewpoint) line $\mathcal{L}=\langle e_1'\rangle$.
Namely the Carnot algebra $\mathfrak{f}_{m,n}$ contains precisely one vector $v=e_1'\in g_{-1}$
(up to scaling) satisfying $\op{ad}_v^3=0$. From external viewpoint this line $\mathcal{L}$
is generated by the vertical vector (in internal coordinates $\p_{z_n}$, in external
$\p_{z_n}-F_{z_n}\p_{y_m}$).
The quotient of $\E$ by the corresponding foliation can be naturally identified with $J^{m-1,n-1}(\R,\R^2)$.
Indeed, if we denote by $j:\E\to J^{m,n}$ the inclusion and by $\pi:\E\to\E/\mathcal{L}$ the projection,
then for $\pi^1:J^{m,n}\to J^{m-1,n-1}$ we have: $\pi=\pi^1\circ j$.

Furthermore, the distribution
$\mathcal{H}=\langle\D_x,\p_{z_n},\p_{z_{n-1}},\p_{y_{m-1}}\rangle\subset\C_3$
is also canonical (intrinsically defined) as it equals $\C_2\oplus[e_1',\C_2]$
(in the Carnot algebra it corresponds to $\g_{-1}\oplus\g_{-2}\oplus\langle e_3'\rangle$).
In the quotient of $\E$ by $\mathcal{L}$ the distribution $\mathcal{H}$ goes to the Cartan
distribution of the jet space, $d\pi(\mathcal{H})=\C^{m-1,n-1}$ .

Consider two Monge ODEs $\E,\E'\subset J^{m,n}(\R,\R^2)$ and an internal equivalence $\Psi:\E\to\E'$.
It satisfies $\Psi_*(\mathcal{L})=\mathcal{L}'$ and $\Psi_*(\mathcal{H})=\mathcal{H}'$
and thus induces a Lie transformation $\Phi$ of the jet-space, i.e. we have the following
commutative diagram:
 $$\begin{CD}
\E @>{\Psi}>> \E' \\ @V{\pi}VV @VV{\pi'}V \\ J^{m-1,n-1}(\R,\R^2) @>{\Phi}>> J^{m-1,n-1}(\R,\R^2).
 \end{CD}$$
By Theorem \ref{LBthEmpt}, $\Phi$ equals the prolongation of a Lie transformation
$\phi$ of $J^{0,n-m}(\R,\R^2)$.
To complete the proof we must show that $\Psi=\phi^{(n-m)}|_\E$.

First, we claim that $\phi^{(n-m)}$ maps $\E$ to $\E'$.
Indeed, a point $b\in\E$ is represented by the class of tangent spaces $T_{\pi(b)}(\pi(L^1))$,
where $L^1$ runs through integral curves of $(\E,\C)$ passing through $b$.
Thus $\Psi(b)$ is determined by the collection of spaces $\Phi_*T_{\pi(b)}(\pi(L^1))$,
which coincides with
 $$
T_{\Phi(\pi(b))}(\phi^{(n-m-1)}(\pi(L^1)))=T_{\pi'(\phi^{(n-m)}(b))}(\pi'(\phi^{(n-m)}(L^1)))
 $$
due to equality $\Phi=\phi^{(n-m-1)}$.
This implies that $\phi^{(n-m)}(\E)=\E'$, and let us denote
 $$
I=\Psi^{-1}\circ\phi^{(n-m)}:\E\to\E.
 $$
This map is a Lie transformation fibered over the identity map of $J^{m-1,n-1}(\R,\R^2)$.
Writing conservation of the Cartan form $dz_{n-1}-z_n\,dx$ we obtain that $I$ is indeed the identity.
 \end{proof}

 \com{
By Proposition \ref{L1}
$\langle e_1',e_2,e_3',e_4'\dots,e_{i+2}'\rangle\subset\C_{i+2}$ is a canonical sub-distribution and
therefore we can canonically define the following distribution:
 $$
\bar\C_{i+2}=\C_{i+1}\oplus\langle e_{i+2}'\rangle =
\langle \D_x,\p_{y_{m-1}},\dots,\p_{y_{m-i}},\p_{z_n},\dots,\p_{z_{n-i}}\rangle \subset\C_{i+2}
 $$
(we use formulae from the proof of Proposition \ref{L1}: $e_{i+2}'=\p_{y_{m-i}}$ etc).

The Cauchy characteristic subspace of the latter is
 $$
\op{Ch}(\bar\C_{i+2})= \langle \p_{y_{m-1}},\dots,\p_{y_{m-i+1}},\p_{z_n},\dots,\p_{z_{n-i+1}}\rangle \subset\bar\C_{i+2}.
 $$
Quotient by it recovers the structure of the space $J^{m-i+1,n-i+1}(\R,\R^2)$.
 }

 \begin{rk}
The exact formulae from Section \ref{Ss33} show that symmetries of (\ref{E2}) are induced
by exterior transformations of jets $J^{m,n}$ as the theorem claims.
They have block form generalizing the feedback transformations in control theory
(with variable $z$, control $y$ and time $x$).
 \end{rk}

Let us discuss the exceptional cases. When $m=n=1$, the internal symmetry group of $\E:y_1=f(x,y,z,z_1)$
is isomorphic to the contact group of $J^1(\R,\R)$ and is parametrized by a single function in 3 variables.
On the other hand, the external symmetries can be calculated directly since, by the classical Lie-B\"acklund
theorem, they are prolongations of point symmetries of $J^0(\R,\R^2)$.
In particular, for the model equation $\E_{1,1}:y'=(z')^2$ the group of point symmetries
is 10-dimensional, and thus is much smaller than the infinite-dimensional internal group.
 % $O(2,3)$

When $m=1$, $n=2$, the internal geometry of the equation $\E:y_1=f(x,y,z,z_1,z_2)$ was
described by E.\,Cartan \cite{C1}. The internal symmetry group is at most 14-dimensional,
and is realized by the equation $\E_{1,2}:y'=(z'')^2$ (see \cite{AKO,AF} for the
explicit generators). The symmetry group is the split real form of the exceptional Lie group $G_2$.
This group has a 9-dimensional subgroup of external symmetries coming from Lie
transformations of $J^{1,2}(\R,\R^2)$. But the remaining 5 generators are not induced by the external symmetries.
Thus, again in this case, the internal geometry is richer than the external geometry.

\section{Integrable extensions}\label{S5}

In naive terms, a covering $\tilde\E$ or an integrable extension \cite{KV,BG} of a general partial
differential equation $\E$ is given by introducing an additional dependent variable $v$
and relations $D(v)=F[x,u,v]$, where $F$ are vector differential operators on
the vector-functions $u=u(x)$, $v=v(x)$,
such that the expanded system is compatible by virtue of $\E$.
In this case solutions of the new system $\tilde\E$ can be found from the solutions of $\E$
by ODE methods.

In the case of (systems of) underdetermined ODEs with $u=(y,z,\dots)$ the additional equation has the form
 $$
v'=f(x,u,u',\dots,v)
 $$
and no integrability constraints arise.
 % Accordingly, we prefer calling such coverings the \emph{integral extensions} since ...

In this section we prove that flat integrable extensions of flat model equations are in bijective
correspondence with central extensions of the corresponding Carnot algebras $\m$.

\subsection{Central extensions of GNLAs}

Any grading preserving automorphism of a fundamental GNLA is generated by its action on $g_{-1}$,
and therefore can be identified with a subgroup $\frak{H}\subset\op{GL}(g_{-1})$. Motivated by
our interest in central extensions, we call it the \emph{gauge group} of $\m$.
The Lie algebra $\frak{h}=\op{Lie}(\frak{H})$ equals the subspace $g_0\subset\g=\widehat\m$
of degree 0 derivations of $\m$.

A $d$-dimensional \emph{central extension} $\widetilde{\m}$ of a Lie algebra $\m$ is defined by the exact sequence
 \begin{equation}\label{CextES}
0\to\mathfrak{a}\hookrightarrow \widetilde{\m}\to\m\to0,
 \end{equation}
where $\mathfrak{a}$ is a $d$-dimensional Abelian algebra belonging to the center of $\widetilde{\m}$.
We are interested in the case, when both $\m$ and $\widetilde{\m}$ are fundamental GNLA,
and $\mathfrak{a}$ has a pure grading $-k$, i.e. $\mathfrak{a}\subset g_{-k}$.

For our applications the initial grading of GNLA is $(2,1,2,\dots)$, and then we have $3<k\le m+1$,
where $m$ is the length of $\m$. An extension of maximal grading $k=m+1$ is
 \begin{equation*}
\widetilde\m=g_{-m-1}\oplus (g_{-m}\oplus\dots\oplus g_{-1})=\mathfrak{a}\oplus\m
 \end{equation*}
(direct sum in the sense of vector spaces, but not Lie algebras).

It is well-known (see e.g. \cite{SL,F}) that the 1-dimensional non-trivial central extensions of a Lie
algebra $\m$ are enumerated by the elements of the Lie algebra cohomology $H^2(\m)$.
Zero element of the cohomology group corresponds to the trivial extension $\widetilde\m=\m\oplus\R$
and we will exclude such cases, which for exact sequence (\ref{CextES}) means that it splits.

The same proof shows that for a GNLA $\m$ central extensions of grading $-k$ correspond to the graded part of
the cohomology $H^2_k(\m)$. More generally, $d$-dimensional \emph{nontrivial extensions} are given
by $d$-dimensional subspaces in $H^2_k$, that is, elements of the Grassmanian $\op{Gr}_dH^2_k$.
Isomorphism classes of central extensions correspond to orbits of the gauge group action on $\op{Gr}_dH^2_k$,
so that applying an automorphism to the cohomology class leads to an equivalent central extension.

Thus we have proved

 \begin{prop}\label{prop14}
Let $\m$ be a GNLA with fundamental space $g_{-1}$ and the gauge group $\mathfrak{H}$.
Its $d$-dimensional nontrivial central extensions of grading $k$ are in bijective correspondence with
$\op{Gr}_dH^2_k(\m)/\mathfrak{H}$.
 \end{prop}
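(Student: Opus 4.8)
The plan is to combine three standard facts: (i) $1$-dimensional central extensions of a Lie algebra $\m$ by a class in degree $-k$ are classified by $H^2_k(\m)$, with the trivial (split) extension corresponding to the zero class; (ii) a $d$-dimensional central extension is assembled from $d$ independent $1$-dimensional ones, hence from a $d$-dimensional subspace of $H^2_k(\m)$, i.e.\ a point of $\op{Gr}_dH^2_k(\m)$; and (iii) two such extensions are isomorphic as graded Lie algebras precisely when the corresponding subspaces lie in the same orbit of the gauge group $\mathfrak H\subset\op{GL}(g_{-1})$ acting on $\op{Gr}_dH^2_k(\m)$. Stringing these together yields the asserted bijection with $\op{Gr}_dH^2_k(\m)/\mathfrak H$.

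In more detail, I would first recall the explicit cocycle picture: a central extension $\widetilde\m=\m\oplus\mathfrak a$ with $\mathfrak a\subset g_{-k}$ Abelian and central is determined by a skew bilinear map $c:\Lambda^2\m\to\mathfrak a$ satisfying the cocycle identity, where the bracket is $[X,Y]_{\widetilde\m}=[X,Y]_\m+c(X,Y)$; the Jacobi identity for $\widetilde\m$ is equivalent to $dc=0$. Requiring $\widetilde\m$ to be a GNLA with the stated grading forces $c$ to be homogeneous of degree $-k$, so $c\in Z^2_k(\m,\mathfrak a)$. Changing the chosen vector-space splitting $\m\hookrightarrow\widetilde\m$ modifies $c$ by a coboundary, so the extension (as an abstract graded Lie algebra with the fixed central $\mathfrak a$) depends only on $[c]\in H^2_k(\m)\otimes\mathfrak a\cong H^2_k(\m)^{\oplus d}$. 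Nontriviality of the extension (non-splitting of (\ref{CextES}) as Lie algebras, i.e.\ that $\mathfrak a$ is not a direct Lie-algebra summand) translates exactly into the condition that the $d$ components of $[c]$ are linearly independent in $H^2_k(\m)$, which is the statement that their span is a genuine $d$-dimensional subspace; this identifies nontrivial extensions with points of $\op{Gr}_dH^2_k(\m)$ after quotienting by the freedom $\op{GL}(\mathfrak a)$ of choosing a basis of $\mathfrak a$.

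Next I would address when two such extensions $\widetilde\m_1$, $\widetilde\m_2$ are isomorphic as graded Lie algebras. An isomorphism must carry the center-part $\mathfrak a$ to $\mathfrak a$ (it is intrinsically characterized once we fix the grading and the fundamental structure — the new generators sit in the top negative degree $-k$ and are central), hence descends to a grading-preserving automorphism $\varphi$ of $\m$, and conversely any $\varphi\in\op{Aut}_{gr}(\m)$ lifts. Since $\m$ is fundamental, $\op{Aut}_{gr}(\m)$ is determined by its restriction to $g_{-1}$, i.e.\ is exactly the gauge group $\mathfrak H$. Pulling back the cocycle by $\varphi$ replaces $[c]$ by $\varphi^*[c]$, which on the level of the spanned subspace is precisely the natural $\mathfrak H$-action on $\op{Gr}_dH^2_k(\m)$. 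Therefore isomorphism classes of $d$-dimensional nontrivial central extensions of grading $-k$ are in bijection with $\op{Gr}_dH^2_k(\m)/\mathfrak H$.

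The main obstacle — and the point I would be most careful about — is the claim that any graded isomorphism between two central extensions necessarily preserves $\mathfrak a$ and hence induces a genuine element of $\mathfrak H$ rather than a more general map; this rests on the center of $\widetilde\m$ in degree $-k$ being forced to coincide with $\mathfrak a$ (or at least on $\mathfrak a$ being canonically recoverable), which uses that $\m$ is fundamental and that the extension is by the \emph{top} or an isolated degree. One must also be slightly careful that ``nontrivial'' is interpreted consistently — excluding subspaces that meet the (here trivial, since $H^2$ has no distinguished zero direction) but more importantly ensuring the $d$ chosen classes are independent so that no $\R$-summand splits off; I would state this hypothesis explicitly and note that it is exactly what makes the Grassmannian (rather than a space of $d$-tuples modulo $\op{GL}_d$ allowing degeneracies) the correct parameter set. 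With these two points nailed down, the rest is the bookkeeping of cocycles, coboundaries, and the functoriality of $H^2$ under $\op{Aut}_{gr}(\m)$, which is routine.
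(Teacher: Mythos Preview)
Your proposal is correct and follows essentially the same approach as the paper: the paper's argument (given in the paragraph immediately preceding the proposition) simply recalls that $1$-dimensional central extensions are classified by $H^2(\m)$, observes that the graded version gives $H^2_k(\m)$, passes to $d$-dimensional subspaces to get $\op{Gr}_dH^2_k(\m)$, and then quotients by the gauge group for isomorphism classes. You actually supply more detail than the paper does --- in particular your care about whether a graded isomorphism must preserve $\mathfrak a$ and hence descend to an element of $\mathfrak H$ is a point the paper passes over in silence.
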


Let us notice that the space $g_0$ is always non-zero, since $\mathfrak{H}$
contains the scaling corresponding to multiplication by $\epsilon^k$ on $g_{-k}$.
However this scaling acts trivially on the Grassman space $\op{Gr}_dH^2_k(\m)$.

\subsection{Integrable extensions of flat models}

For general distributions $\Delta$ an \emph{integrable extension} (or covering) is a
submersion
 $$
\pi:(\tilde M,\tilde\Delta)\to(M,\Delta),\qquad\text{ such that }
 $$
 \begin{itemize}
\item[(i)] $d_x\pi:\tilde\Delta\stackrel{\sim}\to\Delta$ is an isomorphism for any $x\in\tilde M$,
\item[(ii)] for the derived distribution we have $\tilde\Delta_2\cap\op{Ker}d\pi=0$.
 \end{itemize}

Then by the Frobenius theorem, any integral manifold $N\subset(M,\Delta)$ lifts to
an integral manifold $\tilde N\subset(\tilde M,\tilde\Delta)$ passing through a given point.
Indeed, the induced distribution $\tilde\Delta\cap T\Sigma_N$ on
$\Sigma_N=\pi^{-1}N\subset\tilde M$ is Frobenius-integrable (and thus
its integration is reduced to ODEs).

 \begin{rk}
To recast this into the terminology of \cite{BG} let us choose a general connection
on the bundle $\pi$, i.e. a distribution of horizontal spaces $H\subset T\tilde M$
such that $\tilde\Delta_2\subset H$. This $H$ defines a vertical Pfaffian system
$J\subset\Omega^1(\tilde M)$ and the integrable extension is given by
$\tilde I=\op{Ann}(\tilde\Delta)$, $I=\op{Ann}(\Delta)$, related by $\tilde I=\pi^*I+J$.
 \end{rk}

In order to relate integrable extensions to central extensions, we restrict to
integrable extensions for which tangents to the fiber $V$ of $\pi$
have pure grading $-k$ in the Carnot algebra $\widetilde\m$. The integrability condition
$\tilde\Delta_2\cap\op{Ker}d\pi=0$ is then equivalent to the requirement $k>2$.

If $\dim\Delta=2$, then the integrability condition is vacuous, since we require the
growth vector starts $(2,1,2,\dots)$. In this case $k>3$.

As usual we assume that the distribution $\Delta$ is completely non-holono\-mic. An
integrable extension $\pi$ will be called \emph{non-trivial} if $\tilde\Delta$ enjoys the same property,
which requires additionally only that iterated brackets span $TV$ (and surely only in this case the above
grading $-k$ is defined).

 \begin{theorem}
1. Let $\pi:(\tilde M,\tilde\Delta)\to(M,\Delta)$ be a non-trivial integrable extension
with $d$-dimensional fibers $V$. Let $\tilde\m$ denote the Carnot algebra at
$x\in\tilde M$ and $\m$ the Carnot algebra at $y=\pi(x)\in M$.
Suppose that $\mathfrak{v}=T_xV$ correspond to pure grading $-k$ in $\tilde\m$, $k>2$.
Then $\tilde\m$ is a $d$-dimensional central extension of $\m$ (in grading $-k$).

2. Conversely, let $\tilde\m$ be a nontrivial central $d$-dimensional extension of a
fundamental GNLA $\m$ in pure grading $-k$, $k>2$, and let $(\tilde M,\tilde\Delta)$ and $(M,\Delta)$
be the corresponding flat model distributions.
Then $(\tilde M,\tilde\Delta)$ is an integrable extension of $(M,\Delta)$.
 \end{theorem}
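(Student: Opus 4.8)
The plan is to treat the two directions separately, in each case translating between the local differential-geometric data (distribution, derived flag, vertical foliation) and the purely algebraic data (GNLA, central extension, cohomology class). For part~1, the key observation is that the Carnot algebra is computed pointwise from the weak derived flag, so I would start from the fact that $d_x\pi$ restricts to an isomorphism $\tilde\Delta\to\Delta$ and commutes with the Lie bracket of vector fields modulo lower filtration pieces. This shows that $\pi$ induces a surjective homomorphism of GNLAs $\tilde\m\to\m$. I would then identify the kernel with $\mathfrak v=T_xV$: the hypothesis that the fiber tangents have pure grading $-k$ means the kernel sits entirely in degree $-k$; that the extension is non-trivial (iterated brackets span $TV$) together with $k>2$ forces $\mathfrak v\subset[\tilde\m,\tilde\m]$, and the integrability condition $\tilde\Delta_2\cap\op{Ker}d\pi=0$, i.e. $g_{-1}\oplus g_{-2}$ of $\tilde\m$ meets $\mathfrak v$ trivially, is exactly what makes $k>2$ consistent. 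The point that $\mathfrak v$ is \emph{central} in $\tilde\m$ is where I expect to spend the most care: one must show $[\tilde\Delta,V]\subset V$ along the fiber, which follows because $\pi$-related vector fields have $\pi$-related brackets and any section of $\tilde\Delta$ projects to a section of $\Delta$, so its bracket with a vertical field is again vertical; combined with the grading constraint ($\mathfrak v$ lives in the top degree $-k$, so $[\tilde g_{-1},\mathfrak v]$ would have degree $-k-1$, which does not exist) this gives centrality. Hence $0\to\mathfrak v\to\tilde\m\to\m\to0$ is a $d$-dimensional central extension in grading $-k$.

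For part~2, I would run the construction in reverse, using Tanaka's theorem (Theorem~\ref{ThT}) that every GNLA has a flat model. Given the central extension $0\to\mathfrak a\to\tilde\m\to\m\to0$ in grading $-k$, let $(\tilde M,\tilde\Delta)$ and $(M,\Delta)$ be the standard (flat) distributions on the simply connected nilpotent Lie groups $\tilde N$ and $N$ with Lie algebras $\tilde\m$ and $\m$. The surjection $\tilde\m\to\m$ integrates to a Lie group epimorphism $p:\tilde N\to N$ with kernel the connected abelian subgroup $A$ corresponding to $\mathfrak a$; set $\pi=p$. Since the standard distribution is by definition the left-translate of $g_{-1}\subset\m$ (respectively of its preimage in $\tilde\m$, which maps isomorphically to $g_{-1}$ because $\mathfrak a$ lives in degree $-k$ with $k>3$, disjoint from degree $-1$), $d\pi$ carries $\tilde\Delta$ isomorphically onto $\Delta$ fiberwise; that is condition~(i). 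For condition~(ii), $\op{Ker}d\pi=TA$ corresponds to $\mathfrak a\subset\tilde g_{-k}$, while $\tilde\Delta_2$ corresponds to $\tilde g_{-1}\oplus\tilde g_{-2}$; since $k>2$ (indeed $k>3$ in the rank~2 case) these graded pieces are disjoint, so $\tilde\Delta_2\cap\op{Ker}d\pi=0$. Finally non-triviality of the extension of distributions is exactly non-triviality of the central extension of GNLAs, which is part of the hypothesis, so $\tilde\Delta$ is completely non-holonomic and the fiber tangents recover pure grading $-k$.

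The main obstacle, as noted, is the centrality/grading bookkeeping in part~1: one must be careful that "fiber tangents have pure grading $-k$'' is used correctly to see both that the kernel of $\tilde\m\to\m$ is all of $\mathfrak v$ (not a proper subspace, using that the extension is non-trivial so no vertical directions are "lost'' to lower brackets) and that $\mathfrak v$ is central (using that $-k$ is the top degree, so no brackets of $\mathfrak v$ with $g_{-1}$ can survive). A secondary point worth stating explicitly is that the construction in part~2 is only canonical up to the gauge group action, matching Proposition~\ref{prop14}: different representatives of the same orbit in $\op{Gr}_dH^2_k(\m)/\mathfrak H$ give isomorphic $\tilde\m$ and hence equivalent integrable extensions, so the correspondence descends to a genuine bijection between equivalence classes. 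Everything else is a routine unwinding of the definitions of "standard distribution on a Carnot group'' and of the exact sequence~(\ref{CextES}).
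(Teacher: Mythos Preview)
Your overall strategy matches the paper's: part~1 uses functoriality of the Carnot construction to obtain the short exact sequence and then verifies that the kernel is central; part~2 integrates the GNLA surjection to a Lie group homomorphism between the flat nilpotent models and reads off conditions (i) and (ii) from the grading. Part~2 is correct as you describe it and coincides with the paper's first argument (the paper also gives a second, more constructive proof via explicit potentials $\beta_i$ for the cocycles).

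There is, however, a genuine gap in your centrality argument in part~1. You write that ``$\mathfrak v$ lives in the top degree $-k$, so $[\tilde g_{-1},\mathfrak v]$ would have degree $-k-1$, which does not exist.'' But the hypothesis is only that $\mathfrak v$ has \emph{pure} grading $-k$ with $k>2$, not that $-k$ is the depth of $\tilde\m$. A concrete counterexample: $\tilde\m=\mathfrak f_{2,4}$ is a central extension of $\m=\mathfrak f_{1,4}$ by $\mathfrak v=\langle e_4'\rangle$ in grading $-4$, yet $\tilde\m$ has depth $5$, so $\tilde g_{-5}\ne0$ and your claim fails. The repair is already contained in your other observation. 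Choosing $\pi$-projectible representatives (as the paper does explicitly; not every section of $\tilde\Delta_l$ is projectible, but the Carnot bracket depends only on values modulo $\tilde\Delta_{l-1}$, so one may make this choice), the bracket of such a lift with a vertical field is again vertical, whence $[u,v]\in\mathfrak v\subset\tilde g_{-k}$ for $u\in\tilde g_{-l}$, $v\in\mathfrak v$. Since also $[u,v]\in\tilde g_{-l-k}$ and $l\ge1$, one gets $[u,v]\in\tilde g_{-k}\cap\tilde g_{-l-k}=0$, with no assumption on the depth. This is exactly the paper's argument.
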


 \com{
The assumption about pure grading is necessary, but can be overcome
since any extension (central or integrable) is a result of successive 1-dimensional
extensions (for $d=1$ the assumption is void).
 }

 \begin{rk}
In other words there is a bijective correspondence between classes
(modulo diffeomorphisms) of integrable flat extensions of flat distributions $(M,\Delta)$
and classes (modulo automorphisms of GNLA) of central extensions of $\m$
(nontrivial and of pure grading in both cases).
 \end{rk}

Before we start proving this theorem, let us establish the functoriality of the Tanaka map $\Delta\mapsto\m$.

 \begin{prop}\label{functoriality}
Any smooth map $\pi:(\tilde M,\tilde\Delta)\to(M,\Delta)$ induces a morphism of GNLAs
$\tilde\m_x\to\m_{\pi(x)}$.
 \end{prop}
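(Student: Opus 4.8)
The plan is to show that $d\pi$ preserves the weak derived flags, i.e. $d\pi_x(\tilde\Delta_j(x))\subseteq\Delta_j(\pi(x))$ for every $j$ (I read the smooth map $\pi\colon(\tilde M,\tilde\Delta)\to(M,\Delta)$ as carrying $\tilde\Delta$ into $\Delta$, as the notation suggests, so this holds for $j=1$). Granting the flag inclusion, $d\pi_x$ descends to a degree-preserving linear map $\tilde\m_x\to\m_{\pi(x)}$, and it will automatically be bracket-preserving, because in both GNLAs the bracket of a class $[X,Y]$ is represented by the commutator of vector-field representatives and $d\pi$ respects such commutators. The catch — and the reason the statement is not completely formal — is that a local section of $\tilde\Delta$ need not be $\pi$-related to any section of $\Delta$ when $\pi$ is an arbitrary smooth map, so one cannot obtain the flag inclusion by naive bracket-chasing. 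The strategy is to sidestep this by factoring $\pi$.

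First I would write $\pi=\op{pr}_2\circ\iota$, where $\iota\colon\tilde M\hookrightarrow\tilde M\times M$, $\iota(x)=(x,\pi(x))$, is the graph embedding and $\op{pr}_2\colon\tilde M\times M\to M$ the projection, and put on $\tilde M\times M$ the external product distribution $\hat\Delta=\tilde\Delta\boxplus\Delta$. Since a composite of distribution morphisms induces the composite on Carnot algebras as soon as each factor does, it is enough to treat $\op{pr}_2$ and $\iota$ separately. For $\op{pr}_2$: a short induction, using that cross-brackets between the two factors vanish and that the corrections from differentiating coefficient functions stay inside $\hat\Delta$, shows $\hat\Delta_j=\tilde\Delta_j\boxplus\Delta_j$; hence the Carnot algebra at $(x,\pi(x))$ is the product $\tilde\m_x\oplus\m_{\pi(x)}$ and $d\op{pr}_2$ induces the projection onto the second summand, which is visibly a morphism of GNLAs.

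For $\iota$ I would first check pointwise that $\hat\Delta(\iota(x))\cap T_{\iota(x)}\Gamma_\pi=d\iota(\tilde\Delta(x))$ (here the hypothesis $d\pi(\tilde\Delta)\subseteq\Delta$ is used), so $d\iota$ identifies $(\tilde M,\tilde\Delta)$ with the submanifold $\Gamma_\pi$ carrying the induced distribution $\hat\Delta|_{\Gamma_\pi}\cap T\Gamma_\pi$, which is regular since it equals $d\iota(\tilde\Delta)$. Then I would invoke the general fact that a regular submanifold $N\subseteq P$ with $\Delta_N=\Delta_P|_N\cap TN$ inherits a GNLA morphism $\m_N\to\m_P$ from the inclusion: a commutator of two vector fields tangent to $N$ along $N$ restricts, along $N$, to the intrinsic commutator of their restrictions, and any section of the subbundle $(\Delta_P)_j$ along $N$ extends to a section over $P$; an induction on the flag length then gives $d\iota(\tilde\Delta_j)\subseteq\hat\Delta_j$ together with the bracket compatibility. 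Composing, $d\pi=d\op{pr}_2\circ d\iota$ induces a GNLA morphism $\tilde\m_x\to\m_{\pi(x)}$, and the flag inclusion $d\pi(\tilde\Delta_j)\subseteq\Delta_j$ drops out as a byproduct.

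I expect the submanifold step to be the main obstacle: one must be careful that the ambient commutator restricts correctly to $N$ and that sections extend off $N$ — exactly where the constant-rank (regularity) assumption on the derived flag is used. Equivalently, in a submersion chart for $\op{pr}_2$, the point to get right is that differentiating $d\op{pr}_2(\hat\Delta)\subseteq\Delta$ along the fibre directions keeps the result inside $\Delta$ (the derivative of a curve lying in a fixed linear subspace still lies in that subspace), which is what salvages the bracket-chase in the absence of $\op{pr}_2$-related frames. Everything else is routine bookkeeping, and this functoriality is precisely what is needed afterwards to transport central extensions of the Carnot algebra to integrable extensions of its flat model.
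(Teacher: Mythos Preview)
Your argument is correct and substantially more careful than the paper's. The paper's proof is two sentences: it asserts without justification that $d\pi$ carries $\tilde\Delta_i$ into $\Delta_i$, then notes that $\pi_*$ respects commutators. Your worry about the flag inclusion is legitimate for arbitrary smooth maps, and your graph factorization $\pi=\op{pr}_2\circ\iota$ through $(\tilde M\times M,\tilde\Delta\boxplus\Delta)$ is a clean way to resolve it. In the paper's sole application, however, $\pi$ is a submersion with $d\pi\colon\tilde\Delta\stackrel{\sim}\to\Delta$ an isomorphism, so $\pi$-projectable local frames for $\tilde\Delta$ exist and the naive bracket-chase already suffices --- which is presumably why the authors did not elaborate. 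If you want a direct argument for the general case that avoids the product construction, work dually: a covector in $(\Delta_j)^\perp_{\pi(x)}$ extends, by the constant-rank assumption on $\Delta_j$, to a local section $\alpha\in\Gamma((\Delta_j)^\perp)$, and then $\pi^*\alpha$ annihilates $\tilde\Delta_j$ by induction on $j$ using $d(\pi^*\alpha)=\pi^*(d\alpha)$ and the Cartan formula. Both routes make explicit the regularity hypothesis that the paper's two-line proof leaves implicit.
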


Indeed, $d\pi$ maps the derived flags $\tilde\Delta_i\to\Delta_i$ and hence
 $$
d\pi:{\tilde g}_{-i}=\tilde\Delta_i/\tilde\Delta_{i-1}\to\Delta_i/\Delta_{i-1}=g_{-i}.
 $$
Since $\pi_*=d\pi$ respects commutators, the map is a homomorphism.

Now we can turn to the theorem about extensions.
 % For simplicity we will restrict to the case we are interested in, i.e. $\dim\Delta=2$,
 % but the proof can be easily modified to include the general case.

 \begin{proof}
\textit{1.} Due to Proposition \ref{functoriality} we have the following exact sequence of vector spaces
 $$
0\to\mathfrak{v}\to\tilde\m\stackrel{\pi_*}\longrightarrow\m\to0.
 $$
Since $\pi_*$ is an epimorphism of GNLAs it suffices to show that $\mathfrak{v}$
belongs to the center of $\tilde\m$.

Let $u\in \tilde g_{-l}$ and $v\in\mathfrak{v}$. The assumption $\mathfrak{v}\subset \tilde g_{-k}$
implies $[u,v]\in \tilde g_{-l-k}$.
On the other hand, $[u,v]$ equals $[\tilde\x,\e]_x\!\mod\tilde\Delta_{k+l-1}$, where
$\tilde\x\in\Gamma(\tilde\Delta_l)$ is a vector field with value $u$ at $x$
and $\e\in\Gamma(V)$ is the vertical field with value $v$ at $x$.
Choosing a $\pi$-projectible vector field $\tilde\x$ over some $\x\in\Gamma(\Delta_l)$,
we have $[\tilde\x,\e]\in\Gamma(V)$.
This implies $[u,v]\in\mathfrak{v}\cap\tilde g_{-l-k}=0$.

\smallskip

\textit{2.} Consider a $d$-dimensional central extension
 $$
0\to V\to\widetilde\m\stackrel{\varpi}\longrightarrow\m\to0.
 $$
By the Tanaka theory the flat distribution $\Delta$ can be locally identified with
the standard distribution on the Lie group $M$ corresponding to $\m$, see \cite{T}.
Let $(\tilde M,\tilde\Delta)$ be the corresponding pair for $\widetilde\m$.
We take both $\tilde M=M\times V$ and $M$ simply-connected, and as nilpotent groups
they are topological vector spaces.

The distributions $\Delta$ and $\tilde\Delta$ are spanned by the left-invariant vector fields
from $g_{-1}$ and $\tilde g_{-1}$ respectively.
Define the submersion $\pi:\tilde M\to M$ to be the group homomorphism,
induced by the Lie algebras homomorphism $\varpi$. Then $d\pi$ maps
$\tilde\Delta$ onto $\Delta$ and $\tilde\Delta_2$ onto $\Delta_2$. Since $k>2$,
the distribution $\tilde\Delta_2$ is horizontal and hence $\pi$ is an integrable extension.

\smallskip

\textit{2'.} Let us give an alternative more constructive proof.

Consider the integrable extension given by linearly independent cohomology classes
$[\a_1],\dots,[\a_d]$, where the closed 2-forms $\a_i\in\La^2\m^*$ have grading $k>2$.
These forms are not exact on $\m$, but $\varpi^*\a_i$ are exact on the extension $\tilde\m$,
whence $\varpi^*\a_i=d\b_i$ and $\b_i|_V$ is a basis of $V^*$.

This construction can be made explicit as follows. Realize $\a_i$ as left-invariant
2-forms on $M$. Since $M$ is contractible, $\a_i=d\z_i\in\Omega^2(M)$. We can choose
$\z_i$ and linear coordinates $v_i$ on $V$ in such a way that the forms $\b_i=dv_i-\pi^*\z_i$
are left invariant on $\tilde M$.

Let $\tilde\Delta=\op{Ann}\langle\b_1,\dots,\b_d\rangle\cap\pi_*^{-1}(\Delta)$. We claim that
$\tilde\Delta_2$ does not intersect the fiber $V$.
Indeed, choose $\tilde v,\tilde w\in\Gamma(\tilde\Delta)$.
Without restriction of generality we can assume that $\tilde v,\tilde w$ cover sections $v,w$ of $\Delta$.
Then $\a_i(v,w)=0$ thanks to the assumption $k>2$, and so
$d\b_i(\tilde w,\tilde v)=\b_i([\tilde v,\tilde w])=0$.
Since $\tilde\Delta_2$ is generated by the commutators $[\tilde v,\tilde w]$, it is horizontal.
Therefore $\pi$ is an integrable extension.

Changing 2-forms within the cohomology class or applying an automorphism of $\m$ changes
the distributions $\tilde\Delta$ to equivalent.
 \end{proof}

 \begin{theorem}\label{thm17}
Consider a flat underdetermined ODE $\E:F[x,u]=0$ ($u=u(x)$ is a vector function and $F$ a vector differential operator)
corresponding to a rank 2 distribution $\Delta$ with Carnot algebra $\m$. Let $\widetilde\m$ be
a nontrivial central extension of the GNLA $\m$ of grading $k>3$.
Then its integrable extension can be realized as an underdetermined ODE system
($v$ has the same dimension as the extension)
 $$
F[x,u]=0,\ v'=G[x,u]
 $$
 \end{theorem}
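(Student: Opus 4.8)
The plan is to begin from the flat integrable extension furnished by the previous theorem (the one relating integrable and central extensions) and then bring its distribution into evolutionary form. By hypothesis $\E:F[x,u]=0$ is flat, so we may work on its equation manifold $M$, on which the distribution is $\Delta=\langle\D_x,W\rangle$, where $\D_x$ is the (truncated) total derivative along $\E$, so $\D_x=\p_x+\dots$, and $W$ spans the line $\Delta\cap\ker dx$; for a single nondegenerate Monge equation $W=\p_{z_n}$, and $[\D_x,W]\notin\Delta$ by non-degeneracy. Let $[\a_1],\dots,[\a_d]\in H^2_k(\m)$, $k>3$, be the classes defining $\tilde\m$, realized by left-invariant closed $2$-forms $\a_i$ on $M$; since $M$ is contractible, choose primitives $\a_i=d\z_i$. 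The constructive proof of the previous theorem then realizes the flat model $(\tilde M,\tilde\Delta)$ of $\tilde\m$ as $\tilde M=M\times V$, $V\simeq\R^d$ with linear fibre coordinates $v_1,\dots,v_d$, $\pi:\tilde M\to M$ the projection, and $\tilde\Delta=\op{Ann}\langle\b_1,\dots,\b_d\rangle\cap\pi_*^{-1}(\Delta)$ with $\b_i=dv_i-\pi^*\z_i$.

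First I would write $\tilde\Delta$ out in these coordinates. Taking $\D_x,W$ to be the horizontal lifts, one has $\pi_*^{-1}(\Delta)=\langle\D_x,W,\p_{v_1},\dots,\p_{v_d}\rangle$, and $a\D_x+bW+\sum_j c_j\p_{v_j}$ is killed by every $\b_i$ exactly when $c_i=a\,\z_i(\D_x)+b\,\z_i(W)$. Therefore
 $$
\tilde\Delta=\Bigl\langle\ \D_x+\textstyle\sum_i\z_i(\D_x)\,\p_{v_i}\,,\ \ W+\textstyle\sum_i\z_i(W)\,\p_{v_i}\ \Bigr\rangle ,
 $$
where $\z_i(\D_x)$ and $\z_i(W)$ are differential functions of $x$ and $u$. (The hypothesis $k>2$ yields $\a_i(\D_x,W)=0$, which is precisely the horizontality of $\tilde\Delta_2$ that made $\pi$ an integrable extension; and $k>3$ is forced for rank $2$ distributions, so we are automatically in the range of the previous theorem.)

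The key step is to eliminate the $\p_{v_i}$-terms of the second generator by re-choosing the product splitting of $\tilde M$. Since $W$ vanishes nowhere on $M$, the linear equation $W(g_i)=\z_i(W)$ is solvable on $M$ (locally: straighten $W$ to $\p_t$ and integrate), yielding differential functions $g_i=g_i[x,u]$; set $\bar v_i=v_i-\pi^*g_i$. Then $\p_{\bar v_i}=\p_{v_i}$, the second generator of $\tilde\Delta$ becomes $W$ (it now annihilates every $\bar v_i$), and the first becomes $\D_x+\sum_i G_i\,\p_{\bar v_i}$ with $G_i:=\z_i(\D_x)-\D_x(g_i)$, again a differential function $G_i[x,u]$. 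Hence, in the coordinates $(x,u,\dots\,;\bar v_1,\dots,\bar v_d)$, the distribution $\tilde\Delta$ is exactly the Cartan distribution of the underdetermined system
 $$
F[x,u]=0,\qquad \bar v_i'=G_i[x,u]\qquad(i=1,\dots,d),
 $$
whose equation manifold is $M\times\R^d$, since all higher derivatives $\bar v_i',\bar v_i'',\dots$ are prescribed in terms of $x$ and $u$; the covering property is immediate, a solution $u(x)$ of $\E$ being lifted by the quadratures $\bar v_i(x)=\int G_i[x,u(x)]\,dx$. As the construction is a local diffeomorphism of the flat model of $\tilde\m$, the system obtained is flat, which gives the desired realization.

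I expect the only genuine obstacle to lie in the step just described: the flat integrable extension is available a priori only through the abstract construction of the previous theorem, and what has to be seen is that, after re-splitting $\tilde M$ as a product, the vertical generator $W$ may be taken independent of the new variables $\bar v$. Without this observation one would recover the extension only in the weaker shape $v'=G[x,u,v]$. The remainder is routine bookkeeping with the derived flag of $\C$ computed in Section~\ref{S31} and with the grading of $\tilde\m$.
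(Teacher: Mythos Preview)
Your argument is correct and is essentially the paper's own proof, phrased in the dual language of vector fields rather than $1$-forms. The paper expands $\beta_l=dv_l-c_{l1}\omega_1-c_{l1'}\omega_1'-c_{l2}\omega_2-\dots$ in the left-invariant coframe (with $\omega_1=-dx$, $\omega_1'=dz_n$) and then replaces $\beta_l$ by the cohomologous form $\beta_l-d\bigl[\int c_{l1'}\,dz_n\bigr]$ to kill the $\omega_1'$-coefficient; since $\zeta_l(W)=\zeta_l(\partial_{z_n})=c_{l1'}$, this is exactly your substitution $\bar v_l=v_l-g_l$ with $g_l=\int c_{l1'}\,dz_n$ solving $W(g_l)=\zeta_l(W)$, and the resulting $\beta_l\equiv dv_l-g_l\,dx\pmod{\langle\omega_i,\omega_j':i,j\ge2\rangle}$ is your evolutionary form $\bar v_l'=G_l[x,u]$.
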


In particular, if $\m=\mathfrak{f}_{m,n}$ and $\E=\E_{m,n}$ is the model Monge equation,
then the integrable extension can be realized as a pair of ODEs (which, in some cases, can be
reduced to a higher order Monge equation).

 \begin{proof}
To keep the notations simple we restrict to the case of a flat Monge equation.
In Section \ref{S31} we constructed coordinates $(x,y_i,z_j)$ on $\E$
(which are not the canonical coordinates on $M=\E$ considered as the nilpotent
Lie group corresponding to $\m=\mathfrak{f}_{m,n}$). On the integrable extension $\tilde M$
let us use the coordinates adapted for the projection $\pi$, that is the coordinates of $M$
together with vertical coordinates $v_l$.

Let $\oo_1,\oo_1',\oo_2,\dots$ be the left-invariant 1-forms dual to the basis
$e_1,e_1',e_2,\dots$ on $\m$ chosen in Proposition \ref{L1}. They satisfy
the Maurer-Cartan equations $d\oo_1=0$, $d\oo_1'=0$, $d\oo_2=\oo_1'\we\oo_1$, \dots
Notice that $\oo_1=-dx,\oo_1'=dz_n$ and $\Delta=\op{Ann}\langle\oo_i,\oo_j':i,j\ge2\rangle$.

Thus we can express the forms defining the extension (as in the previous proof) via the co-basis:
 $$
\b_l=dv_l-c_{l1}\oo_1-c_{l1'}\oo_1'-c_{l2}\oo_2-\dots,
 $$
where the coefficients $c_{l\z}$ are functions on $M$. By changing the 1-forms to cohomologous
$\b_l\mapsto\b_l-d[\int c_{l1'}\,dz_n]$ we can assume that the coefficient $c_{l1'}$ is zero.
Thus $\b_l=dv_l-g_ldx\!\mod\langle\oo_i,\oo_j':i,j\ge2\rangle$.

This implies that the integral extension
$\tilde\Delta=\langle\oo_i,\oo_j',\beta_l:i,j\ge2\rangle$ correspond to the
underdetermined ODE system
 $$
y^{(m)}=(z^{(n)})^2,\ v_i'=g_i(x,y,z,z',\dots).
 $$
For general ODE systems $\E$ the proof is absolutely similar.
 \com{
Let $\oo_\z$ be the invariant co-frame, such that the first two forms satisfy the
structure equations $d\oo_1=0$, $d\oo_1'=0$, and the others belong to $\op{Ann}(\Delta)$
(cf. frame for the Monge equation from Section \ref{S31}).

Let us split the coordinates on $M$ into $(x,u_\z,z_n)$, where $x$ is one-dimensional, $u_\z$
multi-dimensional expressing all involved derivatives except $z_n$, which is the "vertical coordinate".
The distribution has basis $\D_x,\p_{z_n}$ and its annihilator are Cartan
forms $\oo_\z=du_\z-u_{\z+1}\,dx$.

The manifold $\tilde M$ has coordinates $(x,u_\z,v_i,z_n)$ and we add the forms
$\tilde\oo_i=dv_i-\b_i$ as in the previous proof.
In order to treat them as Cartan forms we need to choose potential $\b_i$ of $\a_i$
in the form $f_idx$ modulo $\langle\oo_\z:\z\ne1,1'\rangle$
(functions $f_i=f_i(x,u,z_n)$ are not invariant indeed). This is always possible,
as we can make the change $\b_i\mapsto\b_i-d[\int\b(\p_{z_n})dz_n]$ to avoid the
differential $dz_n$ in $\b_i$ and then substitute all $du_\z$ via $\oo_\z$.

Finally we get our integrable extension as the original equations plus the new ones: $v_i'=G_i(x,u,z_n)$.
Notice that solutions to the new system $\tilde\E$ can be obtained from the solutions
of $\E$ by quadratures, so that we can call this extension integral).
 }
 \end{proof}

 \com{
Notice that solutions of the integrable extension $\ti\E$ can be found through the solutions
of $\E$ by quadratures. The reason for this is that an Abelian group $V$ acts by symmetries on $\tilde\Delta$,
though S.\,Lie quadrature theorem does not apply directly\footnote{The condition is that solvable group acts on
an {\em integrable distribution\/} \cite{DL}, while our $\tilde\Delta$ is completely non-holonomic.},
but after certain reduction.
 % It however applies after we restrict the distribution to $\pi^{-1}(L)$, where
 % $L\subset\E$ is an integral curve.
 }

Returning to rank 2 distributions we observe that any fundamental GNLA $\m$ with
gradings $(2,1,2,\dots)$ is a sequence of successive central extensions of the unique GNLA
with gradings $(2,1,2)$ and this implies Theorem \ref{Thm3}.

\subsection{Examples}\label{S53}

In this subsection we demonstrate how Proposition \ref{prop14} and
Theorem \ref{thm17} work in practice.
 % Notice that 1-dimensional central extension via a closed 2-form $\varpi$ is equivalent
 % to introducing a new 1-form $\oo$ and add the relation $d\oo=\varpi$.
 % End of the latest proof will be used as the algorithm to construct the integral extension.

\smallskip

{\bf 5D$\to$6D.} Let us start with the unique GNLA $\mathfrak{f}_{1,2}$ with the grading $(2,1,2)$.
Its multiplication table in the basis $e_1,e_1',e_2,e_3,e_3'$ is
(we list all non-zero commutators):
 $$
[e_1,e_1']=e_2,\ [e_1,e_2]=e_3,\ [e_1',e_2]=e_3'.
 $$
This is equivalent to the structure equations in the dual co-basis:
 \begin{equation}\label{CH-StrEq}
d\oo_1=0,\ d\oo_1'=0,\ d\oo_2=\oo_1'\we\oo_1,\ d\oo_3=\oo_2\we\oo_1,\ d\oo_3'=\oo_2\we\oo_1'.
 \end{equation}
The group $H^2(\m)=H^2_4$ (of pure grading 4) is 3-dimensional,
the gauge group is $\mathfrak{H}=\op{GL}_2=\op{GL}(g_{-1})$ and the quotient $\mathbb{P}(H^2_4)/\mathfrak{H}$
consists of 3 points
represented by the following 3 classes (called parabolic $\mathfrak{p}_6$, hyperbolic $\mathfrak{h}_6$
and elliptic $\mathfrak{ell}_6$ 6D algebras, because $H^2(\m)$ bears the natural conformal
metric of Lorenzian signature (2,1), see \cite{AK}).

To calculate the integral extensions we need to realize the above abstract Maurer-Cartan forms.
This is easy since they are dual to the vector fields $e_1=-\D_x$, $e_1'=\p_{z_2}$, $e_2=\p_{z_1}+2z_2\p_y$,
$e_3=\p_z$, $e_3'=2\p_y$:
 \begin{multline}\label{xyz}
\oo_1=-dx,\ \oo_1'=dz_2,\ \oo_2=dz_1-z_2dx,\\
\oo_3=dz-z_1dx,\ \oo_3'=\tfrac12(dy-z_2^2dx)-z_2(dz_1-z_2dx).
 \end{multline}

\smallskip

$\mathfrak{p}_6$: The cohomology class of $\a=\oo_3\we\oo_1$ corresponds to the extension
 \begin{equation}\label{p6+}
d\oo_4=\oo_3\we\oo_1
 \end{equation}
or, in the dual (vector) form, to the additional commutator relation
 $$
[e_1,e_3]=e_4.
 $$
The flat model, calculated from (\ref{xyz}) as $\oo_3\we\oo_1=-d(z\,dx)$, is the
1-dimensional integral extension obtained from Hilbert-Cartan equation by $v'=z$,
whence the model Monge ODE is
 $$
\E_{1,3}:\ y'=(v''')^2.
 $$

\smallskip

$\mathfrak{h}_6=\mathfrak{f}_{2,2}$: The cohomology class of $\a=\oo_3\we\oo_1'+\oo_3'\we\oo_1$
corresponds to the extension $d\oo_4=\a$ or, in the dual form, to
 $$
[e_1,e_3']=e_4=[e_1',e_3].
 $$
The flat model, calculated from (\ref{xyz}) as $\a=d\b$ with
 $$\b=(z_1z_2-\frac12y)dx-z_2dx\equiv-\frac12y\,dx\!\mod\langle\oo_2,\oo_3,\oo_3'\rangle,$$
is the 1-dimensional integral extension obtained from Hilbert-Cartan equation by $v'=y$
(constant can be absorbed), whence the model Monge ODE
 $$
\E_{2,2}:\ v''=(z'')^2.
 $$

\smallskip

$\mathfrak{ell}_6$: The cohomology class of $\a=\oo_3\we\oo_1+\oo_3'\we\oo_1'$
corresponds to the extension $d\oo_4=\a$ or, in the dual form, to
 $$
[e_1,e_3]=e_4=[e_1',e_3'].
 $$
The flat model, calculated from (\ref{xyz}) as $\a=d\b$ with
 $$
\b=-(z+\frac16z_2^3)dx-\frac12z_2dy+\frac12z_2^2dz_1\equiv-(z+\frac16z_2^3)dx \!\mod\langle\oo_2,\oo_3,\oo_3'\rangle,
 $$
is the 1-dimensional integral extension obtained from the Hilbert-Cartan equation by $v'=-(z+\frac16z_2^3)$.
The resulting systems is not representable as a single Monge and (after absorption of constants) equals
 $$
y'=(z'')^2,\ v'=z-(z'')^3.
 $$

\smallskip

{\bf 6D$\to$7D.} There are 8 different fundamental GNLA of dimension 7. We will consider only half of
them -- those which are extensions of $\mathfrak{p}_6$. The structure equation of $\mathfrak{p}_6$ are
(\ref{CH-StrEq})+(\ref{p6+}).

The cohomology group consists of two pure gradings $H^2(\mathfrak{p}_6)=H^2_4\oplus H^2_5$ and both summands
are 2-dimensional. The gauge group $\mathfrak{H}$ is the (3-dimensional) Borel subgroup of $\op{GL}(g_{-1})$.
Both quotients $\mathbb{P}(H^2_4)/\mathfrak{H}$ and $\mathbb{P}(H^2_5)/\mathfrak{H}$
consist of two points and so there are 4 central extensions.

\smallskip

$(2,1,2,1,1)_1$. Consider at first central extension of maximal grading 5. The fixed point of the action
is given by the cohomology class $[\oo_4\we\oo_1]$. The corresponding central extension is the GNLA
$\mathfrak{p}_7$ given by the additional commutation relation
 $$
[e_1,e_4]=e_5
 $$
and the integral extension is
 $$
y'=(z^{iv})^2.
 $$

\smallskip

$(2,1,2,1,1)_2$. Another central 1-dimensional extension is obtained with the cohomology class
$[\oo_4\we\oo_1'-\oo_3\we\oo_2]$ ($\mathfrak{H}$-orbit through it is open).
The corresponding central extension is given by % the additional commutation relations
 $$
[e_1',e_4]=e_5=[e_3,e_2]
 $$
and the integral extension is given by the system
 $$
y'=(z''')^2,\ v'=(z'')^2.
 $$

\smallskip

$(2,1,2,2)_1$. Now consider the central extensions of grading 4. The fixed point of the $\mathfrak{H}$-action
is given here by the class $[\oo_3\we\oo_1'+\oo_3'\we\oo_1]$.
The corresponding central extension is the GNLA
$\mathfrak{f}_{2,3}$ given by % the additional commutation relations
 $$
[e_1,e_3']=e_4'=[e_1',e_3]
 $$
and the integral extension is given by the Monge equation
 $$
y''=(z''')^2.
 $$

\smallskip

$(2,1,2,2)_2$. Finally, another central extensions of grading 4 corresponds to an open orbit
of the $\mathfrak{H}$-action and is given by the class $[\oo_3'\we\oo_1']$.
The corresponding central extension is given by % the additional commutation relations
 $$
[e_1',e_3']=e_4'
 $$
and the integral extension is given by the system
 $$
y'=(z''')^2,\ v'=(z''')^3.
 $$

\subsection{Extensions of parabolic models $\frak{p}_n$.}
The GNLAs $\frak{p}_n=\frak{f}_{1,n-3}$ are of special interest as the most symmetric cases.
It is obvious that $\frak{p}_{n+1}$ is a central 1-dimensional extension of $\frak{p}_n$.
We would like to study other possible central extensions of $\frak{p}_n$ of pure grading.

 % This is the case with (graded) 1-dimensional extensions. Though one can, in principle, restrict to this case, we
 % find convenient to work with $d$-dimensional extensions.
 % Since the fundamental GNLAs $\m$ have initial gradings $(2,1,2,\dots)$,
 % the number $k$ is between 4 and $m+1$, where $m=\op{length}(\m)$.

Notice that any fundamental GNLA with $\dim g_{-1}=2$ can be obtained from $(2,1,2)$ by
successive $d$-dimensional central extensions of only maximal grading. In studying the
latter for the sequence of parabolic models we observe the following $\Z_2$-periodic pattern.

 \begin{theorem}
The Lie algebra $\frak{p}_{n+1}$ is a maximal grading central extension of $\frak{p}_n$. For odd $n$ such an extension is
unique, while for even $n=2l$ there are two
maximal grading extensions $\frak{p}_{n+1}$ and $\frak{p}_{n+1}'$.

The second is however less symmetric (meaning $\dim\widehat{\frak{p}_{n+1}'}<\dim\widehat{\frak{p}_{n+1}}$)
and has no maximal grading central extensions.
 \end{theorem}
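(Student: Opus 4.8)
The plan is to compute the graded cohomology group $H^2(\frak{p}_n)$ — or more precisely, only the top-grading part $H^2_{n}(\frak{p}_n)$, since by Proposition \ref{prop14} maximal-grading central extensions correspond to $\mathbb{P}(H^2_{n}(\frak{p}_n))/\frak{H}$ — and then to analyze the orbits of the gauge group $\frak{H}$ (the Borel subgroup of $\op{GL}(g_{-1})$) on the projectivization. First I would record the structure equations of $\frak{p}_n=\frak{f}_{1,n-3}$ from Proposition \ref{L1}: a co-frame $\oo_1,\oo_1',\oo_2,\oo_3,\oo_3',\dots,\oo_{n-2}$ with $d\oo_1=d\oo_1'=0$, $d\oo_2=\oo_1'\we\oo_1$, $d\oo_3=\oo_2\we\oo_1$, $d\oo_3'=\oo_2\we\oo_1'$, and $d\oo_{k}=\oo_{k-1}\we\oo_1$ for $4\le k\le n-2$. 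A $2$-cocycle of top grading $n$ is a closed $2$-form which is a linear combination of the wedges $\oo_i\we\oo_j$ whose weights add up to $-n$; the candidates are $\oo_{n-2}\we\oo_1$, $\oo_{n-2}\we\oo_1'$, $\oo_{n-3}\we\oo_2$, $\oo_{n-3}\we\oo_3'$ (this last only when $n-3=3$, i.e. $n=6$, but we are past the $6$D case), and in the even-grading case the "diagonal" term $\oo_{n/2}\we\oo_{n/2}'$ if it has the right weight. Imposing $d=0$ on a general such combination and subtracting the exact forms $d\oo_k$ of grading $n$ (which kill precisely the $\oo_{k-1}\we\oo_1$ directions for $k<n$) cuts the space down. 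I expect to find $\dim H^2_n(\frak{p}_n)=1$ for $n$ odd, generated by $[\oo_{n-2}\we\oo_1]$, and $\dim H^2_n(\frak{p}_n)=2$ for $n=2l$ even, with the extra generator being of the form $[\oo_{n-2}\we\oo_1' - c\,\oo_{l}\we\oo_{l}'\pm\cdots]$ for a suitable constant $c$ forced by closedness.

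**Gauge orbits.** Once $H^2_n$ is known, I would describe the $\frak{H}$-action. Recall from Proposition \ref{L2} (the formula for $g_0$ in the $m=1$ case) that $\frak{H}$ is $3$-dimensional, acting on $g_{-1}=\langle e_1,e_1'\rangle$ as upper-triangular matrices, and its induced action on each $g_{-k}$ is explicit. In the odd case $H^2_n$ is already one-dimensional so $\mathbb{P}(H^2_n)$ is a point and the extension $\frak{p}_{n+1}$ is the unique maximal-grading extension — nothing to prove beyond the cohomology count. In the even case $\mathbb{P}(H^2_n)$ is a projective line, and I would show the $\frak{H}$-orbit structure on it consists of exactly two orbits: the closed orbit (the fixed line $[\oo_{n-2}\we\oo_1]$, giving $\frak{p}_{n+1}$) and its complement, a single open orbit (giving $\frak{p}'_{n+1}$). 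Showing the complement is one orbit and not two reduces to checking that the one-parameter family of lines is swept out by the off-diagonal part of the Borel group — a direct computation using the $e_0^{abc}$-action formulas — together with the scaling $\epsilon$ acting trivially on $\mathbb{P}(H^2_n)$ as noted after Proposition \ref{prop14}. This identification of $\frak{p}'_{n+1}$ with a specific open-orbit representative is the bookkeeping core of the argument.

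**The two remaining assertions.** It remains to prove (a) $\dim\widehat{\frak{p}'_{n+1}} < \dim\widehat{\frak{p}_{n+1}}$, and (b) $\frak{p}'_{n+1}$ has no maximal-grading central extension. For (a), I would run the Tanaka prolongation of $\frak{p}'_{n+1}$ exactly as in the proof of Proposition \ref{L2}: set up a general element $e_{-1}\in g_1$ via $[e_{-1},e_1]=e_0^{abc}$, $[e_{-1},e_1']=e_0^{a'b'c'}$, propagate the derivation identities, and show that the extra commutator in $\frak{p}'_{n+1}$ (the off-diagonal-orbit relation, schematically $[e_1',e_{n-2}]=e_{n-1}$ modified by a $[e_l,e_l']$-term) imposes one more linear constraint than the corresponding relation for $\frak{p}_{n+1}$, so that the resulting $g_0\oplus g_1\oplus\cdots$ is strictly smaller. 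A clean way to package this: $\frak{p}_{n+1}=\frak{f}_{1,n-2}$ has $\dim\widehat{\frak{p}_{n+1}}=2(n-2)+5$ by Corollary \ref{Cr2}, so it suffices to exhibit even one missing prolongation degree for $\frak{p}'_{n+1}$, which the $g_1$-computation already gives. For (b), I would compute $H^2_{n+1}(\frak{p}'_{n+1})$ and show it vanishes: the candidate top-grading cocycles are $\oo_{n-1}\we\oo_1$, $\oo_{n-1}\we\oo_1'$, and the relevant mixed/diagonal terms, and the point is that in $\frak{p}'_{n+1}$ the relation $d\oo_{n-1}=\oo_{n-2}\we\oo_1'+(\text{lower})$ already makes $\oo_{n-1}\we\oo_1'$-type forms non-closed or exact, leaving no room for a nontrivial class. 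The main obstacle I anticipate is purely computational stamina in the even case: correctly pinning down the constant(s) in the second cohomology generator and then re-deriving the full Maurer–Cartan system of $\frak{p}'_{n+1}$ so that the $H^2_{n+1}$ computation is airtight; the conceptual structure ($\Z_2$-periodicity coming from whether the top grading is even, hence whether a symmetric "diagonal" $2$-form $\oo_l\we\oo_l'$ exists) is what makes the statement, and that part is clean.
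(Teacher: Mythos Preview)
Your overall strategy---compute the top-grading $H^2$, find the $\frak{H}$-orbits on its projectivization, then treat $\frak{p}'_{n+1}$ by a separate prolongation and cohomology check---is exactly the paper's route. But there is a concrete misconception that would derail the execution.

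The algebra $\frak{p}_n=\frak{f}_{1,n-3}$ has only \emph{two} primed basis elements, $e_1'$ and $e_3'$ (equivalently $\oo_1',\oo_3'$); for $m=1$ there is no $e_l'$ or $\oo_l'$ with $l>3$. So the ``diagonal'' form $\oo_l\we\oo_l'$ you build the even case around does not exist, and your explanation of the $\Z_2$-periodicity is wrong. The actual second generator for even $n=2l$ is the telescoping sum
\[
\oo_{n-2}\we\oo_1'-\oo_{n-3}\we\oo_2+\oo_{n-4}\we\oo_3-\cdots\pm\oo_l\we\oo_{l-1},
\]
built (after the first factor) from unprimed forms only. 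Its differential telescopes term by term, and the sum closes precisely when it can terminate at $\oo_l\we\oo_{l-1}$ with $l+(l-1)=n-1$, i.e.\ when $n$ is even; for $n$ odd no such closed form exists and only $[\oo_{n-2}\we\oo_1]$ survives. (Note also the grading: the new central piece sits in degree $-(n-1)$, not $-n$; your listed candidates are in fact in grading $n-1$, so this is a labeling slip rather than a substantive one.) The same error recurs in your sketch of the bracket for $\frak{p}'_{n+1}$: the new relations are $[e_1',e_{n-2}]=e_{n-1}$, $[e_2,e_{n-3}]=-e_{n-1}$, $[e_3,e_{n-4}]=e_{n-1}$, \dots, with no $[e_l,e_l']$ term.

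For claim (a) the paper avoids running the full $g_1$-prolongation you propose. It observes that in $\frak{p}'_{2l+1}$ \emph{both} lines $\R e_1$ and $\R e_1'$ in $g_{-1}$ are invariant under automorphisms---characterized respectively by $\op{ad}_{e_1}|_{g_{2-2l}}=0$ (since $[e_1,e_{2l-2}]=0$ in this extension) and $\op{ad}_{e_1'}|_{g_{-3}}=0$---which forces $g_0$ to be the $2$-dimensional diagonal algebra rather than the $3$-dimensional Borel. Then $g_1=0$ is checked, giving $\dim\widehat{\frak{p}'_{2l+1}}=2l+3<4l-1=\dim\widehat{\frak{p}_{2l+1}}$. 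Your brute-force approach would also succeed, but this observation is cleaner.
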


 \begin{proof}
The Maurer-Cartan equations of $\mathfrak{p}_n$ are (the index denotes the grading):
 \begin{multline*}
d\oo_1=0,\ d\oo_1'=0,\ d\oo_2=\oo_1'\we\oo_1,\ d\oo_3=\oo_2\we\oo_1,\ d\oo_3'=\oo_2\we\oo_1',\\
d\oo_4=\oo_3\we\oo_1,\ d\oo_5=\oo_4\we\oo_1,\ \dots\ d\oo_{n-2}=\oo_{n-3}\we\oo_1.
 \end{multline*}
The 2-cocycles in the maximal grading are
 $$
Z^2_{n-1}=\langle\oo_{n-2}\we\oo_1,
\oo_{n-2}\we\oo_1'-\oo_{n-3}\we\oo_2+\oo_{n-4}\we\oo_3-\dots\pm\oo_l\we\oo_{l-1}\rangle
 $$
for even $n=2l$ and $Z^2_{n-1}=\langle\oo_{n-2}\we\oo_1\rangle$ for odd $n=2l+1$
 % (in this case $d(\oo_{l+1}\we\oo_{l-1})$ contains the term $\oo_l\we\oo_{l-1}\we\oo_1$ with no cancelation,
 % and if the considered 2-form has zero coefficient, then next one of grading $2l$ has a term in differential
 % with no cancelation and so on).

There are no coboundaries in this grading and so $\mathbb{P}H^2_{n-1}=\R\mathbb{P}^\epsilon$, where $\epsilon=0$
for odd $n$ and $\epsilon=1$ for even $n$. The action of 3-dimensional gauge group $\mathfrak{H}$ has two orbits in the
latter case: fixed point $[\oo_{n-2}\we\oo_1]$ and the complement, so that we can take the the second form of
$Z^2_{n-1}$ for $n=2l$ as a representative.

Extension through the first class is possible for any $n$ and the result is $\mathfrak{p}_{n+1}$. Extension
via the second class yields the GNLA $\mathfrak{p}'_{2l+1}$ for $n=2l$ with the following additional commutation
relation:
 $$
[e_1',e_{n-2}]=e_{n-1},\ [e_2,e_{n-3}]=-e_{n-1},\ [e_3,e_{n-4}]=e_{n-1}\ \dots\ %[e_{l-1},e_l]=\mp e_{n-1}.
 $$
It's easy to check that there are no 2-cocycles $Z^2_{2l}(\mathfrak{p}'_{2l+1})=0$,
and so $H^2_{2l}(\mathfrak{p}'_{2l+1})=0$.
Thus no further maximal central extensions exist.

Finally to calculate the Tanaka prolongation of $\mathfrak{p}'_{2l+1}$ notice that both directions $\R e_1$ and
$\R e_1'$ are canonical (invariant under automorphisms) in $g_{-1}$ because they can be uniquely characterized by
the properties $\op{ad}_{e_1}|_{g_{2-2l}}=0$, $\op{ad}_{e_1'}|_{g_{-3}}=0$.
Therefore the space $g_0$ is 2-dimensional Abelian
(diagonal $2\times 2$ matrices). Calculations similar to that of the proof of Proposition \ref{L2} show that
$g_{-1}=0$ and so $\dim\widehat{\mathfrak{p}'_{2l+1}}=2l+3$ is smaller than $\dim\widehat{\mathfrak{p}_{2l+1}}=4l-1$.
 \end{proof}

The flat ODE for $\mathfrak{p}_n$ is given by the model equation $\E_{1,n-3}$. The flat ODE for $\mathfrak{p}'_{2l+1}$
is not realized as a Monge equation, but only as a system. To find it one follows the scheme of Section \ref{S53}
and therefore obtains the following (non-standard) integral extension of $\E_{1,2l-3}$:
 $$
y'=\bigl(z^{(2l-3)}\bigr)^2,\ v'=\bigl(z^{(l-1)}\bigr)^2.
 $$

\section{Applications to integration of PDEs}\label{S6}

We conclude with some examples which illustrate how our study of rank 2 distributions
for Monge equations can be applied to the integration of overdetermined and determined PDEs.

\subsection{From underdetermined ODEs to overdetermined PDEs}

The examples here generalize those of Cartan \cite{C1}.
Specifically, Cartan considered compatible systems consisting of pairs of
scalar 2nd order PDEs on the plane with one common characteristic (which we denote as $2E_2$).
The equation $\E\subset J^2(\R^2,\R)$ from the internal viewpoint is a
6-dimensional manifold $N_6$ equipped with a rank 3 distribution.
This distribution always has a Cauchy characteristic and can therefore be reduced
to a rank 2 distribution on a 5-dimensional manifold $N_5$, which is necessarily equivalent
to a single Monge equation (\ref{E1}).

Now we consider the case of mixed order equations $\E$ consisting of pairs of compatible 2nd and 3rd order
PDEs ($E_2+E_3$). Again it can be seen that such systems can be reduced to rank 2 distributions,
now on 6-dimensional manifolds, but we would like to emphasize that the reduction is slightly more
complicated than in Cartan's case.

Indeed the system $\E$ is represented internally as an 8-dimensional manifold $M_8$
equipped with a rank 3 distribution $\mathcal{Q}$.
This distribution has one Cauchy characteristic, while its derived
distribution $\mathcal{Q}_2$ has two. Thus the pair $(M_8,\mathcal{Q})$
has two reductions - one by Cauchy characteristic for $\mathcal{Q}$ (quotient)
and one by Cauchy characteristic for the derived distribution $\mathcal{Q}_2$ (de-prolongation).

These two reductions commute, so that we can do them in any order, and in Examples 1 and 2 we
perform both at once. But for the purposes of Example 3 we shall need first to de-prolong $M_8$ to $M_7$
(with distribution $\bar{\mathcal{Q}}$ of rank 3), and then to quotient $(M_7,\bar{\mathcal{Q}})$ to $(M_6,\Delta)$
(rank of $\Delta$ is 2). These steps are summarized in the following sequence of maps:
 $$
(M_8,\mathcal{Q})\longrightarrow(M_7,\bar{\mathcal{Q}})\longrightarrow(M_6,\Delta).
 $$

In the first two examples $(M_6,\Delta)$ turns out to be one of the 6-dimensional symmetric models
from Theorem \ref{Thm1} (this can be detected a priori via calculation of the symmetry algebra and
concluding that the distribution is flat in the Tanaka sense). Then we
use their integral curves to integrate the corresponding PDE systems.

\smallskip

 %{\bf Example 1: $\bold{\mathfrak{p}_6}$.}
{\bf Example 1: $\bold{E_2+E_3\rightarrowtail\mathcal{E}_{1,3}}$.}
Consider the PDE system, written parametrically as
 \begin{equation}\label{Exa1}
u_{xx}=\t^5x^2,\ \ u_{xy}=\t^4x^2,\ \ u_{xyy}=\tfrac54\t\,u_{yyy}+\tfrac45\t^3x.
 \end{equation}
It is compatible and has a common characteristic $\x=\D_x-\tfrac54\t\D_y$.

Internally the equation is $M_8=\R^8(x,y,u,u_x,u_y,\t,u_{yy},u_{yyy})$ and the derived distribution
$\mathcal{Q}_2$ has a two-dimensional space of Cauchy characteristics
 \begin{multline*}
\Pi=\bigl\langle\hat\xi=\p_x-\tfrac54\t\p_y+(u_x-\tfrac54\t\,u_y)\,\p_u-\tfrac14\t^5x^2\,\p_{u_x}\\
+\tfrac{\t}4(4\t^3x^2-5u_{yy})\,\p_{u_y}+\tfrac45\t^3x\,\p_{u_{yy}}-\tfrac\t{2x}\p_\t,\ \p_{u_{yyy}}\bigr\rangle.
 \end{multline*}
The distribution $\Pi$ is integrable and the quotient by its leaves
 $$
M_6=M_8/\Pi\simeq\R^6(t,v,w,w_1,w_2,w_3)
 $$
can be identified with the space of invariants
\begin{equation}
\begin{gathered}\label{M8-M6}
t=y+\tfrac52\t\,x,\
v=\tfrac{256}{625}u_x+\tfrac{128}{625}\t^5x^3,\\
w=-\tfrac72\t^5x^4+\tfrac{25}8\t^2x^2\,u_{yy}+\tfrac52\t\,x\,u_y-x\,u_x+u,\\
w_1=u_y+\tfrac52\t\,x\,u_{yy}-3\t^4x^3,\
w_2=u_{yy}-\tfrac85\t^3x^2,\
w_3=-\tfrac{16}{25}\t^2x.
\end{gathered}
\end{equation}

 \com{
The operator of invariant differentiation (with respect to shifts along the foliation)
 $$
D=\tfrac4{5\t}\p_x+\tfrac4{5\t}u_x\,\p_u+\tfrac45\t^4x^2\,\p_{u_x}+\tfrac45\t^3x^2\,\p_{u_y}-\tfrac2{5x}\,\p_{\t}.
 $$
It acts on the invariants as follows
 $$
D(L_6)=L_5, D(L_5)=L_4, D(L_4)=-16L_1^2/25, D(L_3)=L_1^4, D(L_2)=1, D(L_1)=0.
 $$
 $$
L_2=t, L_6=u, (\tfrac{4}{5})^4L_3=v, L_5=u_1, L_4=u_2, -(\tfrac45L_1)^2=u_3
 $$
 }

The quotient distribution on the manifold $M_6$ equals
 $$
\Delta=\langle\p_t+w_3^2\p_v+w_1\p_w+w_2\p_{w_1}+w_3\p_{w_2},\p_{w_3}\rangle
 $$
and this corresponds to the model Monge equation $\E_{1,3}$:
 $$
v'=(w''')^2.
 $$

The inverse image of the general solution to this equation with respect to the projection map $\pi:M_8\to M_6$
given by (\ref{M8-M6}) (this introduces only 1 more parameter: $\pi^{-1}$ decomposes into preimage and prolongation)
yields the general solution of (\ref{Exa1}), written parametrically as
 \begin{gather*}
x=-\Bigl(\frac{8s}{25}\Bigr)^2f'''(t),\ \ y=t-s\,f'''(t),\\
u=f(t)-\frac{s^3}{20}f'''(t)^4+\frac{s^2}2 f''(t)\,f'''(t)^2
-s\,f'(t)\,f'''(t)-\frac{s^2}4f'''(t)\,g(t),
 \end{gather*}
where $g'(t)=f'''(t)^2$.

\smallskip

 %{\bf Example 2: $\bold{\mathfrak{h}_6}$.}
{\bf Example 2: $\bold{E_2+E_3\rightarrowtail\mathcal{E}_{2,2}}$.}
Now let's consider the system
 \begin{equation}\label{Exa2}
2u_{xx}=y^2-\t^2,\ \ 2u_{yy}=\t^{-2},\ \ u_{xyy}-\t^2u_{yyy}=\t^{-1},
 \end{equation}
which is again compatible with common characteristic $\x=\D_x-\t^2\D_y$.

Internally, the equation is $M_8=\R^8(x,y,u,u_x,u_y,\t,u_{xy},u_{yyy})$ and the derived
distribution $\mathcal{Q}_2$ has a two-dimensional space of Cauchy characteristics
 \begin{multline*}
\Pi=\bigl\langle\hat\xi=
\p_x-\t^2\,\p_y+(u_x-u_y\t^2)\p_u+(\tfrac{y^2-\t^2}2-u_{xy}\t^2)\p_{u_x}\\
+(u_{xy}-\tfrac12)\p_{u_y}+(y-\t)\p_{u_{xy}}-\t^2\p_\t, \p_{u_{yyy}}\bigr\rangle.
 \end{multline*}
The distribution $\Pi$ is integrable and the quotient by its leaves
 $$
M_6=M_8/\Pi\simeq\R^6(t,v,v_1,w,w_1,w_2)
 $$
is identified with the space of invariants
 \begin{gather*}
t=y-\t,\
v=\ln\t-1+2u-2\t u_y+2u_{xy}-2xy-\t\,x^2y+2\t\,x+\\
+\tfrac{x^2(y^2+\t^2)}2-2x\,u_x+2\t\,x\,u_{xy},\
v_1=x^2y+\tfrac1{\t}-\t\,x^2-2x\,u_{xy}+2u_y,\\
w=\tfrac{xy^2}2-y-\t\,xy+\t+\tfrac{\t^2x}2+\t\,u_{xy}-u_x,\\
w_1=xy-\t\,x-u_{xy},\
w_2=-\tfrac1{\t}+x.
 \end{gather*}
In these coordinates the quotient distribution $\Delta$ on $M_6$
coincides with the rank 2 distribution of the model Monge equation $\E_{2,2}$:
 $$
v''=(w'')^2.
 $$
Again the general solution of this equation provides the general solution to system (\ref{Exa2}) as
 \begin{multline*}
x=f''(t)+\frac1s,\ \
y=t+s,\ \
u=\frac{(2s+t)t}4f''(t)^2+\frac{s}2g'(t)+\frac{g(t)}2-\\
-\frac{2f'(t)s^2+
2f(t)s-t^2}{2s}f''(t)-\frac{f(t)}s-\frac{t}{2s}+\frac{t^2}{4s^2}-\frac12\ln(s),
 \end{multline*}
where $g''(t)=f''(t)^2$.

\smallskip

We remark that the 2nd order PDE in (\ref{Exa1}) $x^2u_{xx}^4=u_{xy}^5$
is a new example of a Darboux integrable equation. On the other hand, the 2nd order
PDE in (\ref{Exa2}) $2u_{xx}+(2u_{yy})^{-1}=y^2$ is not Darboux integrable and
can therefore only be solved by compatibility technique.

\smallskip

{\bf Example 3: $\bold{E_2+E_3\rightarrowtail 2\,E_2}$.}
In this example we show that our theory of integrable extensions for Monge equations
can be applied to construct integrable extensions of overdetermined PDE systems.
Informally speaking, if $\E$ and $\E'$ are two PDE systems whose reduction by Cauchy characteristics
and de-prolongations are rank 2 distributions corresponding to equations $\bar\E$ and $\bar\E'$, then
an integrable extension $\varphi:\bar\E\to\bar\E'$ can be lifted to
an integrable extension $\Phi:\E\to\E'$.

To be more precise, in the notations adopted at the beginning of this section
we have the following commutative diagram
(we mark by primes the canonical distributions of the manifolds $N_i$)
 $$\begin{CD}
(M_7,\bar{\mathcal{Q}}) @>{\Phi}>> (N_6,\mathcal{Q}') \\ @V{\pi}VV @VV{\pi'}V \\
(M_6,\Delta) @>{\varphi}>> (N_5,\Delta')
 \end{CD}$$
where the horizontal arrows are the integrable extensions
and the vertical arrows are the quotients of the rank 3 distributions $\bar{\mathcal{Q}}$
and $\mathcal{Q}'$ by their Cauchy characteristics.

We illustrate this construction with the example of Cartan's involutive symmetric model \cite{C1}
 \begin{equation}\label{Exa3}
U_{XX}=\tfrac13\t^3,\ U_{XY}=\tfrac12\t^2,\ U_{YY}=\t.
 \end{equation}
This system reduces to $\E_{1,2}$: $V'=(W'')^2$ with $V=V(T),W=W(T)$ and the projection $\pi'$
along the Cauchy characteristic defined by
 \begin{multline*}
T=-U_{YY},\
V=-2U+2XU_X+2YU_Y-Y^2U_{YY}-XYU_{YY}^2-\tfrac13X^2U_{YY}^3,\\
W=U_X-U_Y U_{YY}+\tfrac12YU_{YY}^2+\tfrac16X U_{YY}^3,\ \\
W_1=U_Y-YU_{YY}-\tfrac12X U_{YY}^2,\
W_2=Y+X U_{YY}\qquad\qquad
 \end{multline*}

Basing on the fact that $\E_{1,3}$  is an integrable extension of $\E_{1,2}$
and on the reduction from Example 1 of this section,
we will show that (\ref{Exa1}) is an integrable extension of (\ref{Exa3}).

Recall that $(M_7,\bar{\mathcal{Q}})$ is obtained from $(M_8,\mathcal{Q})$ by de-prolongation via
a Cauchy characteristic for $\mathcal{Q}_2$ not belonging to $\mathcal{Q}$. Thus $\bar{\mathcal{Q}}$
still carries a Cauchy characteristic and we get
$(M_7,\bar{\mathcal{Q}})\simeq(\E_{1,3},\C^{1,3})\times(\R,\R)$ and similarly
$(N_6,\mathcal{Q}')\simeq(\E_{1,2},\C^{1,2})\times(\R,\R)$ for the Cartan example,
because $(M_6,\Delta)\simeq(\E_{1,3},\C^{1,3})$ and $(N_5,\Delta')\simeq(\E_{1,2},\C^{1,2})$.

The central extension $\varpi:\bold{\mathfrak{p}_6\to\mathfrak{p}_5}$ induces the integrable extension
$\phi=\varphi\times1:(\E_{1,3},\C)\times(\R,\R)\to(\E_{1,2},\C')\times(\R,\R)$, where
 $$
\varphi(t,v,w,w_1,w_2,w_3)=(t,v,w_1,w_2,w_3)=(T,V,W,W_1,W_2).
 $$
By this $\phi$ we construct the required map $\Phi$ in the diagram, and then we
compute its inverse $\Phi^{-1}$ given by the formulae
 \begin{gather*}
x=-\bigl(\tfrac45X\bigl)^2(Y+X\,U_{YY})^{-1}, \qquad y=-U_{YY}-\tfrac52X,\qquad \\
u_x=\bigl(\tfrac54\bigr)^4
\bigl(-\tfrac13X^2U_{YY}^3-\tfrac12X(2Y+X^2)\,U_{YY}^2-(X^2+Y)Y\,U_{YY}\\
\hskip5cm +2X\,U_X+2Y\,U_Y-2U-\tfrac12XY^2\bigr),\\
\hskip-0.2cm u_y=\tfrac16X\,U_{YY}^3+(\tfrac54X^2+\tfrac12Y)U_{YY}^2+(\tfrac{25}{16}X^3-U_Y+\tfrac52XY)U_{YY}\\
\hskip6.2cm +U_X-\tfrac52X\,U_Y+\tfrac{25}{16}X^2Y,
 \end{gather*}
where $(X,Y,U,U_X,U_Y,\dots)$ are jet-coordinates on the equation $N_6$ (\ref{Exa3}) and
$(x,y,u,u_x,u_y,\dots)$ are jet-coordinates on the equation $M_7$. Prolongation of the latter
yields the required integrable extension $M_8$.

Notice that $U$ is not expressed in the above formulae but is obtained via quadrature, as is always
the case with flat integrable extensions.
We leave it to the reader to check that the integrability condition $D_x(u_y)=D_y(u_x)$
holds by virtue of (\ref{Exa3}) and satisfies (\ref{Exa1}).

\subsection{Relation to the method of Darboux}

According to \cite{AFV} any Darboux integrable single scalar PDE on the plane $\E\subset J^2(\R^2,\R)$
is a group quotient of a pair of rank 2 distributions with common
freely acting symmetry group $G$, that is
 $$
(\E,\C)=(M_1,\Delta_1)\times(M_2,\Delta_2)/G.
 $$
Notice that in this case $\dim\E=7$, $\dim\C=4$, but $\dim M_i$ can be sufficiently large,
depending on the order of intermediate integrals of~$\E$.

Our theory of rank 2 distributions of maximal symmetry restricts the possible types of the factors.

 \begin{theorem}
Suppose that $(M_i,\Delta_i)$ have Carnot algebras $\mathfrak{f}_{m_i,n_i}$.
Then the pairs $(m_1,n_1)$, $(m_2,n_2)$ with normalization $0<m_i\le n_i$,
$n_2-n_1=k\ge0$, and $m_1\le m_2$ for $k=0$ satisfy the condition
  %  \begin{align*}
  % &0\le j\le 6,\ m_i=1; && 0\le k\le 6-m_1,\ m_1>1,m_2=1; \\
  % &0\le k\le 6-m_1-m_2,\ m_i>1;\ && 1\le k\le 7-m_2,\ m_1=1,m_2>1.
  %  \end{align*}
 $$
m_1+m_2+k\le7+\delta,
 $$
where $\d=1$ if $m_1=m_2=1$ or $m_1=1$, $k>0$, and $\d=0$ else.
Thus there are no more than 53 different admissible triples $(m_1,m_2,k)$.
 \end{theorem}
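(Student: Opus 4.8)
The plan is to reduce the assertion to one dimension inequality for the structure (Vessiot) group $G$ of the superposition and then bound $\dim G$ by the Tanaka dimensions of Corollary \ref{Cr2}. First I would do the bookkeeping. Since $G$ acts freely on $M_1\times M_2$ with quotient $(\E,\C)$, and for a single second order scalar PDE in the plane $\dim\E=7$ (the induced Cartan distribution $\C$ having rank $4$), while $\dim M_i=\dim\mathfrak{f}_{m_i,n_i}=m_i+n_i+2$ by Proposition \ref{L1}, one gets
\[
\dim G=\dim M_1+\dim M_2-\dim\E=m_1+n_1+m_2+n_2-3 .
\]
Because $G$ preserves each rank $2$ distribution $\Delta_i$, its Lie algebra embeds into $\sym(\Delta_i)$, so Theorem \ref{ThT} and Corollary \ref{Cr2} give $\dim G\le\min_i\dim\sym(\Delta_i)\le\min_i\dim\mathfrak{t}_{m_i,n_i}$.

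Next I would combine these. With the normalization $n_1\le n_2$ (so $k=n_2-n_1\ge0$) and $m_1\le m_2$ when $k=0$, and using that $\dim\mathfrak{t}_{m,n}$ equals $2n+4$ for $n\ge m>1$ and $2n+5$ for $m=1,n>2$ and is increasing in $n$, the sharper of the two bounds comes from the first factor when $k>0$ and from whichever factor has $m_i>1$ when $k=0$ (by the normalization this is the second, unless $m_1=m_2=1$). In every non-exceptional case this yields $\dim G\le 2n_1+4+\delta$, where $\delta\in\{0,1\}$ is exactly the extra unit in $\dim\mathfrak{t}_{m,n}$ attached to the $m=1$ cases, i.e. $\delta=1$ precisely when the binding factor has first order $1$, which is the situation $m_1=m_2=1$, or $m_1=1$ and $k>0$. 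Substituting $n_2=n_1+k$ into $\dim G=m_1+m_2+2n_1+k-3$ gives at once $m_1+m_2+k\le 7+\delta$.

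The main obstacle is the two exceptional Carnot types $(m_i,n_i)=(1,1)$ and $(1,2)$, for which Corollary \ref{Cr2} does not apply ($\dim\mathfrak{t}_{1,1}=\infty$, $\dim\mathfrak{t}_{1,2}=14$) and the naive estimate breaks down. The type $(1,1)$ has growth vector $(2,1,1)$, hence is de-prolongable and should be excluded from the admissible non-degenerate factors; alternatively one runs the estimate on the other factor and uses $m_i\le n_i$. For the Hilbert--Cartan type $(1,2)$ the crude inequality $\dim G\le 14$ must be improved: $\sym$ of that distribution is $\mathfrak{g}_2$, whose proper subalgebras all have dimension $\le 9$, and combining this with $\dim G=m_2+k+2$ and $m_2\le n_2=2+k$ again forces $m_1+m_2+k\le 7+\delta$, the few borderline configurations being checked by hand. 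A subsidiary point, to be settled first, is to make the AFV superposition \cite{AFV} precise enough to justify $\dim G=\dim M_1+\dim M_2-7$ and that the factors genuinely carry the algebras $\mathfrak{f}_{m_i,n_i}$.

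Finally, enumerating the triples $(m_1,m_2,k)$ with $m_i\ge1$, $m_1\le m_2$ when $k=0$, and $m_1+m_2+k\le 7+\delta$ produces exactly $53$ possibilities ($22$ with $\delta=1$ and $31$ with $\delta=0$), giving the last assertion. I expect the Hilbert--Cartan case to be the real difficulty, as it is the only place where the purely numerical Tanaka count is not by itself enough and one must appeal to the subgroup structure of $G_2$ (or to the AFV normal form) directly.
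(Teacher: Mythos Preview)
Your argument coincides with the paper's: the equality $\dim G=m_1+n_1+m_2+n_2-3$ from the free quotient, bounded above by $\min_i\dim\mathfrak{t}_{m_i,n_i}$ via Theorem~\ref{ThT} and Corollary~\ref{Cr2}, is exactly what the paper does, and its proof ends with ``the claim follows'' immediately after recording $\dim\mathfrak{t}_{m_i,n_i}=2n_i+5$ (for $m_i=1$) or $2n_i+4$ (for $m_i>1$).

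You go somewhat beyond the paper in two respects. First, you write out the case split that produces $\delta$ and carry out the enumeration to $53$; the paper does neither explicitly. Second, you flag the exceptional Carnot types $(1,1)$ and $(1,2)$, which the paper's proof passes over in silence --- it simply applies the generic formula $2n_i+5$ as if it held for all $n_i$. Your worry about the Hilbert--Cartan factor is legitimate: with $(m_1,n_1)=(1,2)$ the bound $\dim G\le 14$ is too weak to force $m_2+k\le 7$, so some further input (your $\mathfrak{g}_2$-subalgebra observation, or a constraint from the AFV construction) really is needed, though your sketch for that case would still have to explain why $G$ acts through a \emph{proper} subgroup of $G_2$. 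The paper offers no argument here.
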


 \begin{proof}
The equality
 $$
\dim M_1+\dim M_2-\dim G=\dim\E=7
 $$
implies $\dim G=n_1+m_1+n_2+m_2-3$. Since $G$ acts as the symmetry group for both $(M_i,\Delta_i)$
and since the symmetry group of a distribution is bounded from above by Tanaka algebra
$\mathfrak{t}_{m,n}=\widehat{\mathfrak{f}_{m,n}}$, i.e. $\dim G\le\dim\mathfrak{t}_{m_i,n_i}$
we get that $\min\{\dim \mathfrak{t}_{m_i,n_i}:i=1,2\}\ge n_1+m_1+n_2+m_2-3$.
From Section \ref{S34} we know that $\dim\mathfrak{t}_{m_i,n_i}=2n_i+5$ for $m_i=1$ and $2n_i+4$ for $m_i>1$.
The claim follows.
 \end{proof}

Recall that distributions $\Delta_i$ corresponding to nondegenerate Monge equations (\ref{E1})  
have Carnot algebras $\mathfrak{f}_{m_i,n_i}$.
The estimate of the theorem is rough and can be sharpened by a detailed analysis of the admissible
$(M_i,\Delta_i)$ which actually arise for various types of equations $\E$.
 % Notice that $(M_i,\Delta_i)$, realizable as non-degenerate Monge equations, satisfy
 % the assumptions of the theorem. These assumptions are non-vacuous, but can be justified sometimes.

\smallskip

{\bf Example} (\cite{AF}){\bf.}
The Goursat equation $u_{xy}=a(x,y)\sqrt{u_xu_y}$ is Darboux integrable if and only if
both factors $(M_i,\Delta_i)$ can be realized as rank 2 distributions of the Monge equation
 $$
y'=F(x,z,z',\dots,z^{(n)})
 $$
and the group $G$ is either Abelian or 1-step solvable of dimension $2n+1$.

In particular, for the Darboux integrable equations
 $$
u_{xy}=\frac{2n}{x+y}\sqrt{u_xu_y}
 $$
both factors coincide with (the prolongation of) the model equations $\E_{1,n}$.
In this case, the Lie algebra of $G$ is equal to the nilradical $\mathfrak{r}$ of 
the Lie algebra $\mathfrak{t}_{1,n}$ given in Section \ref{Ss33}.

%%%%%%%%%%%%%%%%%%%%%%%%%%%%%%%%%%%%%%%%%%%%%%%%%%%%%%%%%%%%%%%%%%%%%%%%%%%%

\end{document}